\DeclareMathAlphabet{\mathpzc}{OT1}{pzc}{m}{it}
\DeclareSymbolFont{SY}{U}{psy}{m}{n}
\DeclareMathSymbol{\emptyset}{\mathord}{SY}{'306}
\theoremstyle{plain}
\newtheorem{thm}{Theorem}[section]
\newtheorem{cor}[thm]{Corollary}
\newtheorem{lem}[thm]{Lemma}
\newtheorem{prop}[thm]{Proposition}
\theoremstyle{definition}
\newtheorem{defn}[thm]{Definition}
\newtheorem{rem}[thm]{Remark}
\newtheorem{ex}[thm]{Example}
\numberwithin{equation}{section}
\def\C{{\mathbb C}}
\def\norm#1{\left\|{#1}\right\|}
\def\N{\mathbb{N}}
\def\v{\varphi}
\def\l{\lambda}
\def\ra{\rightarrow}
\def\ov{\overline}
\def\lo{\longrightarrow}
\def\m{\mathcal}
\def\mb{\mathbb}
\def\a{\alpha}
\def\g{\gamma}
\def\wi{\widetilde}
\def\beq{\begin{eqnarray}}
\def\eeq{\end{eqnarray}}
\def\beqa{\begin{eqnarray*}}
\def\eeqa{\end{eqnarray*}}
\def\ov{\overline}
\def\bl{\boldsymbol}
\def\i{\prime}
\newcommand{\be}{\begin{equation}}
\newcommand{\ee}{\end{equation}}
\newcommand{\bea}{\begin{eqnarray}}
\newcommand{\eea}{\end{eqnarray}}
\newcommand{\Bea}{\begin{eqnarray*}}
\newcommand{\Eea}{\end{eqnarray*}}
\newcommand{\inner}[2]{\langle #1,#2 \rangle }%
\newcounter{cnt1}
\newcounter{cnt2}
\newcounter{cnt3}
\newcommand{\blr}{\begin{list}{$($\roman{cnt1}$)$}
 {\usecounter{cnt1} \setlength{\topsep}{0pt}
 \setlength{\itemsep}{0pt}}}
\newcommand{\bla}{\begin{list}{$($\alph{cnt2}$)$}
 {\usecounter{cnt2} \setlength{\topsep}{0pt}
 \setlength{\itemsep}{0pt}}}
\newcommand{\bln}{\begin{list}{$($\arabic{cnt3}$)$}
 {\usecounter{cnt3} \setlength{\topsep}{0pt}
 \setlength{\itemsep}{0pt}}}
\newcommand{\el}{\end{list}}
\begin{document}
\title[Contractive Hilbert modules on  quotient domains]{Contractive Hilbert modules on  quotient domains}
\author[Biswas]{Shibananda Biswas}
\address[Biswas]{Indian Institute of Science Education and Research Kolkata, 741246, Nadia, West Bengal, India}
\email{shibananda@iiserkol.com}
\author[Ghosh]{Gargi Ghosh$^\dag$}
\address[Ghosh]{Jagiellonian University, 30-348 Krakow, Poland    \newline      Current Address: Indian Institute of Technology Roorkee, 247667, India}\email{gargi.ghosh@ma.iitr.ac.in}
\author[Narayanan]{E. K. Narayanan}\address[Narayanan]{Indian Institute of Science, Bangalore, 560012, India} \email{naru@iisc.ac.in}
\author[Shyam Roy]{Subrata Shyam Roy}
\address[Shyam Roy]{Indian Institute of Science Education and Research Kolkata, 741246, Nadia, West Bengal, India}
\email{ssroy@iiserkol.ac.in}
\thanks{$^\dag$This work is supported by the project No. 2022/45/P/ST1/01028 co-funded by the
National Science Centre and the European Union Framework Programme for Research and Innovation Horizon 2020 under the Marie Sklodowska-Curie grant agreement No. 945339. For the purpose of Open Access, the author has applied a CC-BY public
copyright licence to any Author Accepted Manuscript (AAM) version arising from this
submission.}

\subjclass[2020]{47A13, 47A25, 47B32, 20F55} 
\keywords{Spectral set, Complex reflection group, Reproducing kernel Hilbert space, Reducing submodules, Weighted Bergman spaces}
\begin{abstract}
Let the complex reflection group $G(m,p,n)$ act on the unit polydisc $\mb D^n$ in $\mb C^n.$  A $\boldsymbol\Theta_n$-{\it contraction} is a commuting tuple of operators on a Hilbert space having 
$$\ov{\boldsymbol\Theta}_n:=\{\boldsymbol\theta(z)=(\theta_1(z),\ldots,\theta_n(z)):z\in\ov{\mb D}^n\}$$  as a  spectral set, where $\{\theta_i\}_{i=1}^n$  is a homogeneous system of parameters associated to $G(m,p,n).$ A plethora of examples of $\boldsymbol\Theta_n$-contractions is exhibited. Under a mild hypothesis, it is shown that these $\boldsymbol\Theta_n$-contractions are mutually unitarily inequivalent.  These inequivalence results are obtained concretely for the weighted Bergman modules under the action of the permutation groups and the dihedral groups.  The division problem is shown to have  negative answers for the Hardy module and the Bergman module on the bidisc. A Beurling-Lax-Halmos type representation for the invariant subspaces of $\boldsymbol\Theta_n$-isometries is obtained.
\end{abstract}

\maketitle
\section{Introduction} 
The notion of a quotient domain $\Omega/G$  for a bounded domain $\Omega\subseteq\mb C^n$ by a finite complex reflection group  $G$  emerged in \cite{BDGS}. After that there is a flurry of activity involving this notion, for instance, see \cite{CKY, DM, Ghosh, GG, GN, GS, GZ, HW}. A {\it complex reflection} on $\mb C^n$ is a linear mapping $\sigma:\mb C^n\to\mb C^n$ such that $\sigma$ has finite order in $GL(n,\C)$ and the rank of $I_n-\sigma$ is $1.$ A group $G$ generated by complex reflections is called a {\it complex reflection group}. 
 Let $G$ be a finite complex reflection group which acts (right action) on $\mb D^n$ by \begin{eqnarray}\label{action} \sigma \cdot \bl z =  {\sigma}^{-1}\bl z, \,\, \text{~for~} \sigma \in G \text{~and~} \bl z \in \mb D^n.\end{eqnarray} The group action extends to the set of all complex-valued functions on $\mb D^n$ by $\sigma( f)(\bl z) =  f({\sigma}^{-1}\cdot \bl z)$ and a function $f$ is said to be $G$-{\it invariant} if $\sigma(f)=f$ for all $\sigma \in G.$ There is a system of $G$-invariant  algebraically independent homogeneous polynomials $\{\theta_i\}_{i=1}^n$ associated to a complex reflection group $G,$ called a homogeneous system of  parameters (hsop) or basic polynomials associated to $G.$  
In fact, Chevalley-Shephard-Todd theorem provides a characterization of finite complex reflection groups in terms of hsop. It states that a finite group $G$ is generated by complex reflections if and only if the ring of $G$-invariant polynomials 
$$\mb C[z_1,\ldots,z_n]^G=\{f\in\mb C[z_1,\ldots, z_n]:\sigma(f)=f \text{~for all~} \sigma\in G\}$$
forms a polynomial ring $\mb C[\theta_1,\ldots,\theta_n]$ \cite[p.282]{Shephard-Todd}. The polynomial map \begin{eqnarray*} \bl \theta =(\theta_1,\ldots,\theta_n) : \mb D^n \to \bl \theta(\mb D^n) \end{eqnarray*} is a proper holomorphic map and the domain $\bl \theta(\mb D^n)$ is biholomorphically equivalent to
 the quotient $\mb D^n/G$ \cite{trybula, BDGS}. So we refer to the domains of the form $\bl \theta(\mb D^n)$ by \emph{quotient domains.}

The irreducible finite complex reflection groups were classified by Shephard and Todd in \cite{Shephard-Todd}. They proved that every irreducible complex reflection group belongs to an infinite family $G(m, p, n)$ indexed by three parameters, where $ m,n,p$ are  positive integers and $p$ divides $m,$  or, is one of 34 exceptional groups. Although for certain values of $m,p$ and $n,$ $G(m,p,n)$ can be reducible, for example, $G(2,2,2)$ is the dihedral group of order $4$ which is isomorphic to the product of two groups of order $2.$  We note that $G(m,p,n)=$ $n\times n$ monomial matrices whose nonzero entries are $m$-th root of unity with product a $\frac{m}{p}$-th root of unity, where $m,p,n$ are positive integers and $p$ divides $m.$ It is known that $G(m,p,n)$ contains complex reflections of two types when $p<m$: for $\xi_m=e^{2\pi i/m}$ we have \begin{enumerate}
    \item transposition like reflections $\sigma_{ij|k}$ for $1\leq i<j\leq n$ and $k=1,\ldots,m$ such that the $i$-th component and $j$-th component of $\sigma_{ij|k} \cdot (z_1,\ldots,z_n)$ are $\xi_m^{k}z_i$ and $\xi_m^{-k}z_j,$ respectively and any other component remains the same.
    \item The diagonal reflection $\sigma_{i|k}$ for $1\leq i \leq n$ and $k=p,2p,\ldots,(\frac{m}{p}-1)p$ acts by $\sigma_{i|k} \cdot (z_1,\ldots,z_n)=(z_1,\ldots,z_{i-1},\xi_m^kz_i,z_{i+1},\ldots,z_n).$
\end{enumerate} The group $G(m,m,n)$ contains only transposition like reflections. In other words, the above discussion provides an explicit description of the action of $G(m,p,n)$ on $\mb D^n$. For a detailed study on $G(m, p, n),$ we refer to \cite[Chapter 2]{LT09}.

 Let $\bl s$ denote the symmetrization map $\bl s=(s_1,\ldots,s_n):\mb C^n\to\mb C^n$ defined by 
$$
s_i( z) = \sum_{1\leq k_1< k_2 <\ldots <k_i\leq n} z_{k_1} \cdots z_{k_i}  \mbox{~for~} i= 1,\ldots,n.
$$
For $n>1,$ a set of basic invariant polynomials for the group $G(m,p,n)$ is given by elementary symmetric polynomials of $z_1^m,\ldots,z_n^m$ of degrees $1,\ldots,n-1$ and $(z_1\cdots z_n)^q,$ where $q=m/p$ \cite[p. 36]{LT09}. We denote the elementary symmetric polynomials of degree $i$ of $z_1^m,\ldots,z_n^m$ by $\theta_i(z)$ for $i=1,\ldots,n-1$ and $\theta_n(z) = (z_1\cdots z_n)^q.$ That is, 
\Bea
\theta_i(z)=s_i(z_1^m,\ldots,z_n^m) \,\,\text{~for~} 1\leq i\leq n-1 \text{~and~}
\theta_n(z_1,\ldots,z_n)=s_n(z_1,\ldots,z_n)^q=(z_1\ldots z_n)^q. 
\Eea
Note that $\{\theta_i\}_{i=1}^n$ satisfies 
\bea\nonumber
\ov{\theta_i(z)}\theta_n^p(z)=\theta_{n-i}(z) \,\, \text{~for~} z\in\mb T^n,
\eea
$\mb T$ being the unit circle in the complex plane.

The group $G(m, p, n)$ has an action on $\mb D^n$ as in Equation \eqref{action}. Thus we have an explicit description for the basic polynomial map $\bl \theta:=(\theta_1,\ldots,\theta_n) : \mb D^n \to \bl\theta(\mb D^n)$ associated to $G(m,p,n).$ Any image of $\mb D^n$ under the proper holomorphic mapping $\bl \pi$ with $\operatorname{Deck}({\bl \pi})=G(m,p,n),\,n>1,$ is biholomorphic to the associated $\bl \theta(\mb D^n).$ Put $\bl\Theta_n=\bl\theta(\mb D^n).$

Any  commuting tuple of operators on a Hilbert space having $\ov{\bl\Theta}_n$ as a spectral set will be called a $\bl\Theta_n$-{\it contraction}. We say that $\ov{\bl\Theta}_n$ is a {\it spectral set} for a commuting tuple $(T_1,\ldots, T_n)$ of operators on a Hilbert space if, for every polynomial $f\in\mb C[z_1,\ldots,z_n]$
 \bea\label{spec}
 \Vert f(T_1,\ldots, T_n)\Vert\leq\sup_{\ov{\bl\Theta}_n}\vert f\vert=\Vert f\Vert_{\ov{\bl\Theta}_n,\infty}.
 \eea 

For the particular case of $G(1,1,n),\,n>1,$ that is, the permutation group $\mathfrak S_n$ on $n$ symbols, $\ov{\bl\Theta}_n$ coincides with  $\Gamma_n,$ the closed symmetrized polydisc. An exception in terminology has been made here by  calling a commuting tuple of operators having $\ov{\bl\Theta}_n$ as a spectral set $\bl\Theta_n$-contraction rather than a $\ov{\bl\Theta}_n$-contraction.  The study of $\Gamma_n$-contractions ($n>2$) was initiated in \cite{BS} along the lines of \cite{AY}. A large class of examples of $\Gamma_n$-contractions arising from weighted Bergman modules  was exhibited in \cite{BS}. The question of mutual unitary equivalence of these $\Gamma_n$-contractions was studied in \cite{BGMS} with a number of interesting results.

Since the  map $\bl\theta:\mb C^n\to\mb C^n$ is a proper holomorphic map and $\ov{\mb D}^n$ is polynomially convex,  $\bl\theta(\ov{\mb D}^n)=\ov{\bl\Theta}_n$ is polynomially convex by
\cite[Theorem 1.6.24]{ELS}. 
Hence the Taylor spectrum of a $\bl\Theta_n$-contraction is contained in $\ov{\bl\Theta}_n.$ Although $\bl\Theta_n$-contractions are defined  by requiring that the inequality \eqref{spec} holds for all polynomials $f$ in $\mb C[z_1,\ldots,z_n],$ this is equivalent to the definition of $\bl\Theta_n$-contractions by requiring \eqref{spec} to hold for all functions $f$ analytic in a neighbourhood of $\ov{\bl\Theta}_n$ due to polynomial convexity of $\ov{\bl\Theta}_n$ and Oka-Weil theorem. As explained in \cite{AY}, the subtleties involving the various notions of joint spectrum and functional calculus for commuting tuples of operators  are not relevant for $\bl\Theta_n$-contractions, simply because of the polynomial convexity of $\bl\Theta_n.$

 In this article, we study properties of $\bl\Theta_n$-contractions and find a number of ways of constructing $\bl\Theta_n$-contractions. We obtain a model for $\bl\Theta_n$-isometries. As an application,  a Beurling-Lax-Halmos type theorem characterizing joint invariant subspaces of a pure $\bl\Theta_n$-isometries is established. 

 In the concluding section, we exhibit a number of different classes of $\bl\Theta_n$-contractions  applying the results developed in the preceding sections. For instance, Theorem \ref{reduced} exhibits a plethora of examples of $\bl\Theta_n$-contractions and Corollary \ref{number} shows under a mild hypothesis that there are precisely $\vert\widehat G\vert$ number of mutually (unitarily) inequivalent  $\bl\Theta_n$-contractions  arises in Theorem \ref{reduced}, $\widehat G$ being the set of all equivalence classes of irreducible representations of the group $G.$

 Theorem \ref{Theta} is  a concrete realization of Corollary \ref{number} in the case of weighted Bergman modules $\mb A^{(\l)}(\mb D^n),\,\,\l\geq 1.$ Moreover, the following inequivalence results are obtained for the weighted  Bergman modules $\mb A^{(\l)}(\mb D^n),\,\,\l\geq 1$
\begin{enumerate}
    \item [(i)] For the group $G(1,1,n),\,\,n>1,$ it is shown in  Theorem \ref{Gamma} that the family of $\Gamma_n$-contractions indexed by $\widehat{G(1,1,n)}$ are unitarily inequivalent. Moreover, it is shown in Theorem \ref{similar} that they are not even similar. 
    \item[(ii)] For the group dihedral group $G(k,k,2),\,\,k>1$ Theorem \ref{Gamma} shows that the family of $\mathscr D_{2k}$-contractions indexed by $\widehat{G(k,k,2)}$ are unitarily inequivalent.
\end{enumerate}
In addition, as a consequence Proposition \ref{unbdd}, it follows that the division problem is shown to have negative answers for the Hardy module and the Bergman module on the bidisc (see Remark \ref{div}).


\section{$\bl\Theta_n$ and $\bl\Theta_n$-contractions}
 Let $\bl s$ denote the symmetrization map $\bl s=(s_1,\ldots,s_n):\mb C^n\to\mb C^n$ defined by 
$$
s_i( z) = \sum_{1\leq k_1< k_2 <\ldots <k_i\leq n} z_{k_1} \cdots z_{k_i}  \mbox{~for~} i= 1,\ldots,n.
$$
Set $s_0:=1.$ Then $\theta_0:=s_0=1,$ and $\theta_i:\mb C^n\to \mb C$ is defined by  \Bea\theta_i(z)&=&s_i(z_1^m,\ldots,z_n^m) \text{ for } 1\leq i\leq n-1\text{ and }\\  \theta_n^p(z)&=&z_1^m\cdots z_n^m. \Eea  Thus $(\theta_1,\ldots,\theta_n)\in {\ov{\bl\Theta}}_n$ if and only if all the zeros of the polynomial 
$$\sum_{i=0}^{n - 1} (-1)^{i}\theta_{i}z^{n - i}+(-1)^n\theta_n^p$$
lie in $\ov{\mb D}.$ This realization of points of ${\ov{\bl\Theta}}_n$  will be used repeatedly.

Let $\mb C[z_1,\ldots,z_n]$ denote the algebra of all complex polynomials in $n$ variables. We also use the notation $\mb C[\bl z]$ instead of $\mb C[z_1,\ldots,z_n]$ if there is no ambiguity. A polynomial $f\in \mb C[z_1,\ldots,z_n]$ is in called $G$-invariant if 
$$(f \circ \sigma) (\bl z) = f({\sigma^{-1}} \cdot \bl z):=f(\bl z) \text{~for all~} \sigma\in G  \text{~and~} \bl z\in \mb C^n.$$ 
The ring of $G$-invariant polynomials in $n$ variables is denoted by $\mb C[z_1,\ldots,z_n]^G = \{f\in \mb C[\bl z] : f \circ \sigma = f \text{ for all } \sigma \in G\}.$ If $f\in \mb C[z_1,\ldots,z_n]$ is $G$-invariant, then there is a unique $q\in \mb C[z_1,\ldots,z_n]$ such that $f=q\circ \bl\theta,$ where $\bl\theta$ is a basic polynomial mapping associated to $G(m,p,n)$ \cite[Theorem 3.3.1]{E}. 
 
We state two classical theorems on location of zeros of polynomials which will be useful in the sequel. For a polynomial $f\in\mb C[z],$ the derivative of $p$ with respect to $z$ will be denoted by $f^\prime.$

\begin{thm}\label{GL}(Gauss-Lucas, \cite[p. 22]{M}) The zeros of the derivative of a polynomial $f$ lie in the convex hull of the
zeros of $f.$
\end{thm}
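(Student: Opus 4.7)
The plan is to use logarithmic differentiation to express a zero of $f'$ as an explicit convex combination of the zeros of $f.$ Write $f(z)=c\prod_{k=1}^{N}(z-z_k)^{m_k},$ where $z_1,\ldots,z_N$ are the distinct zeros of $f$ with multiplicities $m_1,\ldots,m_N.$ Let $w$ be a zero of $f'.$ If $w$ coincides with some $z_k,$ then $w$ lies in the convex hull trivially, so assume $f(w)\neq 0.$

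Dividing the identity $f'(z)/f(z)=\sum_{k=1}^{N}\frac{m_k}{z-z_k}$ and evaluating at $z=w$ yields
\begin{equation*}
\sum_{k=1}^{N}\frac{m_k}{w-z_k}=0.
\end{equation*}
The next step is the key algebraic trick: multiply each term by $\overline{w-z_k}/\overline{w-z_k}$ to obtain
\begin{equation*}
\sum_{k=1}^{N}\frac{m_k\,\overline{(w-z_k)}}{|w-z_k|^2}=0.
\end{equation*}
Taking complex conjugates and setting $\lambda_k:=m_k/|w-z_k|^2>0,$ this becomes $\sum_{k=1}^{N}\lambda_k(w-z_k)=0,$ i.e.
\begin{equation*}
w=\frac{\sum_{k=1}^{N}\lambda_k z_k}{\sum_{k=1}^{N}\lambda_k}.
\end{equation*}
Since $\lambda_k>0,$ the right-hand side is a convex combination of $z_1,\ldots,z_N,$ so $w$ lies in their convex hull.

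There is no real obstacle here; the only step that requires any cleverness is the conjugation trick that converts the reciprocal sum into a positive linear combination. Everything else is bookkeeping about multiplicities and the trivial case when $w$ is itself a zero of $f.$
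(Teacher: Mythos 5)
Your proof is correct: it is the classical logarithmic-derivative argument, in which $\sum_k m_k/(w-z_k)=0$ is turned, via the conjugation trick, into the identity $w=\bigl(\sum_k \lambda_k z_k\bigr)/\bigl(\sum_k \lambda_k\bigr)$ with positive weights $\lambda_k=m_k/|w-z_k|^2$, and the trivial case $f(w)=0$ is handled separately. The paper does not prove this statement at all -- it quotes Gauss--Lucas from Marden's book, where essentially this same argument is the standard one -- so your write-up matches the intended (cited) proof and needs no changes.
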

\noindent We recall a definition.
\begin{defn}
A polynomial $f\in\mb C[z]$ of degree $d$ is called \emph{self-inversive} if  $z^d\overline{f(\frac{1}{\bar z})}=\omega f(z)$
for some constant $\omega\in \mb C$ with $\vert \omega\vert=1.$
\end{defn}
\begin{thm} \label{Cohn}(Cohn, \cite[p. 206]{M})
 A necessary and sufficient condition for all the zeros of a polynomial $p$ to lie on the unit circle $\mb T$ is that
$f$ is self-inversive and all the zeros of $f^\prime$ lie in the closed unit disc $\ov{\mb D}.$
\end{thm}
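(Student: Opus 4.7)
Factor $f(z)=c\prod_{j=1}^{d}(z-\zeta_j)$ with $|\zeta_j|=1$. Using $\overline{\zeta_j}=1/\zeta_j$, a direct computation gives
\begin{equation*}
z^d\,\overline{f(1/\bar z)} \;=\; \frac{(-1)^d\,\bar c}{c\,\zeta_1\cdots\zeta_d}\,f(z),
\end{equation*}
and the leading constant has modulus one, so $f$ is self-inversive. Since the zeros of $f$ lie on $\mb T$ and the convex hull of $\mb T$ is $\ov{\mb D}$, Theorem \ref{GL} (Gauss-Lucas) places the zeros of $f'$ in $\ov{\mb D}$.

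\textbf{Sufficiency --- the key algebraic identity.} Writing $f(z)=\sum_{k=0}^{d}a_k z^k$, self-inversiveness translates into $\overline{a_{d-k}}=\omega a_k$ for all $k$. Applied to $(f')^*(z):=z^{d-1}\,\overline{f'(1/\bar z)}=\sum_{k} k\,\overline{a_k}\,z^{d-k}$, the substitution $\overline{a_k}=\omega\,a_{d-k}$ followed by reindexing yields the clean identity
\begin{equation*}
\omega\bigl(d\,f(z)-z\,f'(z)\bigr) \;=\; (f')^*(z).
\end{equation*}
By hypothesis, all zeros of $f'$ lie in $\ov{\mb D}$; since $w$ is a nonzero zero of $(f')^*$ iff $1/\bar w$ is a zero of $f'$, every nonzero zero of $(f')^*$ lies in $\{|z|\geq 1\}$. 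A direct check using $1/\bar z=z$ on $\mb T$ moreover gives $|(f')^*(z)|=|f'(z)|$ on $\mb T$, so on the unit circle the two summands of $d\,f(z)=z\,f'(z)+\omega^{-1}(f')^*(z)$ have equal modulus.

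\textbf{From the identity to the conclusion.} Self-inversiveness forces the zero multiset of $f$ to be invariant under $z\mapsto 1/\bar z$; hence it suffices to show $f$ has no zero in $\mb D$. I would run an argument-principle count on the rearranged identity $z\,f'(z)=d\,f(z)-\omega^{-1}(f')^*(z)$: on a circle $|z|=1-\varepsilon$, the term $z\,f'(z)$ contributes $1+(\text{\# zeros of }f'\text{ in }\mb D)$ to the winding number, while $(f')^*$ is zero-free in $\ov{\mb D}$ except possibly at the origin (and in fact $(f')^*(0)=d\,\overline{a_d}\neq 0$); balancing the two sides with $\varepsilon\downarrow 0$ pins down the number of zeros of $f$ in $\mb D$ to be zero. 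Combined with the $z\mapsto 1/\bar z$ symmetry, all zeros of $f$ must lie on $\mb T$.

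\textbf{Main obstacle.} The delicate point is precisely that the modulus equality $|z\,f'(z)|=|(f')^*(z)|$ on $\mb T$ makes Rouch\'e's theorem unavailable in its classical form, and the possible presence of zeros of $f'$ \emph{on} $\mb T$ makes a naive argument-principle count on $|z|=1$ illegal. A clean proof therefore needs either a careful limiting argument on circles $|z|=1\pm\varepsilon$, or equivalently a Schur-Cohn style reduction exploiting the fact that the map $h\mapsto h^*$ preserves the degree and interchanges the interior and exterior of $\mb D$ for zero locations; threading this symmetry consistently with the self-inversive constraint on $f$ is the heart of the matter.
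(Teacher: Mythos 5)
You should first note that the paper does not prove this statement at all; it is quoted as a classical result with a citation to Marden, so there is no internal proof to compare against, and your attempt has to stand on its own. Your necessity argument (factorization plus Theorem \ref{GL}) is correct, and so is the key identity $\omega\bigl(d\,f(z)-z\,f'(z)\bigr)=(f')^*(z)$, which is indeed the crux of the classical proofs. The genuine gap is in the sufficiency direction, and you have essentially flagged it yourself: the argument-principle count does not go through as described. Since the two summands in $d\,f(z)=z\,f'(z)+\omega^{-1}(f')^*(z)$ have equal modulus at every point of $\mb T$, no Rouch\'e-type domination is available on circles $|z|=1-\varepsilon$ as $\varepsilon\downarrow 0$, and the winding number of $d\,f$ cannot be read off from those of $z\,f'$ and $(f')^*$ separately; ``balancing the two sides'' is precisely the missing step, not a detail. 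In addition, the assertion that $(f')^*$ is zero-free in $\ov{\mb D}$ away from the origin is false in general: any zero of $f'$ lying on $\mb T$ is also a zero of $(f')^*$ at the same point, which is exactly the boundary degeneracy you worry about. As written, the sufficiency half is a plan with an acknowledged hole rather than a proof.

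The good news is that your identity finishes the job with no contour integration at all. For any polynomial $g$ of degree $e$ with all zeros in $\ov{\mb D}$, writing $g^*(z)=z^{e}\overline{g(1/\bar z)}$ and factoring $g(z)=c\prod_j(z-w_j)$, one gets $g^*(z)=\bar c\prod_j(1-\overline{w_j}z)$, and the elementary inequality $|1-\bar w z|^2-|z-w|^2=(1-|w|^2)(1-|z|^2)\geq 0$ gives $|g^*(z)|\geq |g(z)|$ for $|z|\leq 1$, while each factor $1-\overline{w_j}z$ is nonzero for $|z|<1$, so $g^*$ has no zeros in $\mb D$. Apply this with $g=f'$ (degree $d-1$, all zeros in $\ov{\mb D}$ by hypothesis). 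If $f(\zeta)=0$ for some $|\zeta|<1$, your identity yields $|\zeta|\,|f'(\zeta)|=|(f')^*(\zeta)|$, and the right-hand side is strictly positive; if $f'(\zeta)\neq 0$ this contradicts $|\zeta|\,|f'(\zeta)|<|f'(\zeta)|\leq |(f')^*(\zeta)|$, and if $f'(\zeta)=0$ it contradicts positivity outright. Hence $f$ has no zero in $\mb D$, and since self-inversiveness pairs any zero outside $\ov{\mb D}$ with one inside (evaluate the defining relation at $1/\bar\alpha$), all zeros of $f$ lie on $\mb T$. Replacing your third paragraph with this pointwise estimate turns your sketch into a complete proof.
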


\begin{defn}\cite{Gamelin}
The Shilov boundary $\partial \Omega$ of a bounded domain $\Omega$ is given by the
closure of the set of its peak points and a point $\bl w \in \ov{\Omega}$ is said to be a peak point of $\Omega$ if there exists a function $f \in \m A(\Omega)$ such that $|f(\bl w)|>|f(\bl z)|$ for all $\bl z \in \ov{\Omega} \setminus \{\bl w\},$ where $\m A(\Omega)$ denotes the algebra of all functions holomorphic on $\Omega$ and continuous on $\ov{\Omega}.$
\end{defn} Since the distinguished boundary of $\ov{\Omega}$ in $\mb C^n$ is the Shilov boundary of $\Omega,$ these two notions will be used interchangeably without any confusion. The proper holomorphic map $\bl \theta : \mb D^n \to \bl \theta(\mb D^n)$ can be extended to a proper holomorphic map of the same multiplicity from $D'$ to $\bl \theta(D)',$ where the open sets $D'$ and $\bl \theta(D)'$ contain $\overline{\mb D^n}$ and $\overline{\bl \theta(\mb D^n)},$ respectively. Then \cite[p. 100, Corollary 3.2]{kosinski-zwonek} states that $\bl \theta^{-1}(\partial\bl \theta(\mb D^n)) = \partial\mb D^n=\mb T^n.$ Thus \bea\label{shilovboundary}\partial\bl \theta(\mb D^n) = \bl \theta(\mb T^n).\eea
We shall need the following characterizations of the Shilov boundary of $\bl\Theta_n$.
\begin{thm}\label{bg}
 Let $\theta_i\in\mathbb C, i=1,\ldots,n.$ Then the following are equivalent:
 \begin{enumerate}
  \item[(i)] $(\theta_1,\ldots, \theta_{n})$ is in the Shilov boundary of $\bl\Theta_n.$
  \item[(ii)]$\vert \theta_n\vert=1, \theta_n^p \ov \theta_j=\theta_{n-j}$ and $(\gamma_1\theta_1,\ldots,\gamma_{n-1}\theta_{n-1})\in\Gamma_{n-1},$
for $j=1,\ldots,n-1,$ where $\gamma_j=\frac{n-j}{n}$; 
  \item[(iii)] $(\theta_1,\ldots, \theta_n)\in \ov{\bl\Theta}_n$ and $\vert \theta_n\vert=1$;
  \item[(iv)] there exists  $(\mu_1,\ldots, \mu_{n - 1})$  in the Shilov boundary of $\bl\Theta_{n - 1}$ such that $\theta_j = \mu_j + {\ov\mu}_{n - j}\theta_n^p$ for $ j = 1,\ldots, n - 1.$
 \end{enumerate}
\end{thm}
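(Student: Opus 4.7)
The linchpin is to identify $\ov{\bl\Theta}_n$ and its Shilov boundary with the locus where the monic polynomial
\[
P(z)\;=\;z^n-\theta_1 z^{n-1}+\cdots+(-1)^{n-1}\theta_{n-1}z+(-1)^n\theta_n^p
\]
has its zeros in $\ov{\mb D}$, respectively on $\mb T$. The first is stated at the outset of this section, and the second follows from \eqref{shilovboundary}: if $w\in\mb T^n$ and $\zeta_i:=w_i^m$, then $P(z)=\prod_{i=1}^n(z-\zeta_i)$; conversely, any set of roots on $\mb T$ lifts back to a point of $\mb T^n$ by choosing $m$-th roots and exploiting the surjectivity of the $q$-th power map on $\mb T$ to match $\theta_n$. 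Every subsequent step is then a statement about $P$.

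The equivalence (i)$\iff$(iii) is immediate: the product of the roots of $P$ equals $\theta_n^p$, so if all roots lie in $\ov{\mb D}$ and $|\theta_n|=1$, every root must lie on $\mb T$; the converse is trivial.

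For (ii)$\iff$(iii) I would invoke Cohn's theorem (Theorem~\ref{Cohn}). A direct coefficient comparison in $z^n\ov{P(1/\bar z)}=\omega P(z)$ shows that $P$ is self-inversive exactly when $|\theta_n|=1$ and $\theta_n^p\ov\theta_j=\theta_{n-j}$ for $j=1,\ldots,n-1$ (with $\omega=(-1)^n\theta_n^p$). Differentiating gives
\[
\tfrac1n P'(z)=z^{n-1}-\gamma_1\theta_1 z^{n-2}+\gamma_2\theta_2 z^{n-3}-\cdots+(-1)^{n-1}\gamma_{n-1}\theta_{n-1},\qquad \gamma_j=\tfrac{n-j}{n},
\]
which is precisely the defining polynomial for $(\gamma_1\theta_1,\ldots,\gamma_{n-1}\theta_{n-1})\in\Gamma_{n-1}$. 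Cohn's theorem then identifies the conjunction of these two conditions with ``all zeros of $P$ lie on $\mb T$'', matching (iii). For the easier direction (iii)$\Rightarrow$(ii) one may bypass Cohn: self-inversivity is a one-line check from $P(z)=\prod(z-\zeta_i)$ with $|\zeta_i|=1$, and the $P'$ condition then falls out of Gauss-Lucas (Theorem~\ref{GL}).

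Finally, (i)$\iff$(iv) I would establish by peeling off a single root. Writing the roots as $\zeta_1,\ldots,\zeta_n\in\mb T$, set $\mu_j:=s_j(\zeta_1,\ldots,\zeta_{n-1})$; the tuple $(\mu_1,\ldots,\mu_{n-1})$ lies on the Shilov boundary of $\bl\Theta_{n-1}$, and expanding $P(z)=(z-\zeta_n)\bigl(z^{n-1}-\mu_1 z^{n-2}+\cdots+(-1)^{n-1}\mu_{n-1}\bigr)$ produces $\theta_j=\mu_j+\zeta_n\mu_{j-1}$. The identity $\ov\mu_{n-j}=\mu_{j-1}/\mu_{n-1}$, a routine consequence of $\ov{s_k(\zeta)}=s_{(n-1)-k}(\zeta)/s_{n-1}(\zeta)$ for $\zeta\in\mb T^{n-1}$, combined with $\zeta_n\mu_{n-1}=\theta_n^p$, rewrites this as $\theta_j=\mu_j+\ov\mu_{n-j}\theta_n^p$, proving (iv). Conversely, given $\mu$ on the Shilov boundary of $\bl\Theta_{n-1}$ satisfying the relations in (iv), set $\zeta_n:=\theta_n^p\ov\mu_{n-1}$ and run the same Vieta expansion backwards to recover $P(z)=(z-\zeta_n)\prod_{i<n}(z-\zeta_i)$; the conjugation identities forced by $\mu\in\partial\bl\Theta_{n-1}$ then compel $|\zeta_n|=1$, hence $|\theta_n|=1$, so every root of $P$ lies on $\mb T$ and (i) follows.

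The subtlest step is this last one: the conjugation identity $\ov\mu_{n-j}=\mu_{j-1}/\mu_{n-1}$ must be matched exactly against the Vieta expansion of $(z-\zeta_n)Q(z)$, and the internal consistency of the $n-1$ relations in (iv) together with membership of $\mu$ in $\partial\bl\Theta_{n-1}$ has to be leveraged to force $|\theta_n|=1$. Once this bookkeeping is set up, everything else is a mechanical tracking of polynomial coefficients backed by the classical Gauss-Lucas and Cohn theorems recalled in the text.
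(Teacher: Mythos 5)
Your overall strategy coincides with the paper's: identify $\ov{\bl\Theta}_n$ (resp.\ its Shilov boundary) with coefficient tuples of the monic polynomial $P$ having all zeros in $\ov{\mb D}$ (resp.\ on $\mb T$); get (i)$\iff$(iii) from the product of the roots having modulus $|\theta_n|^p$; get the equivalence with (ii) from self-inversivity plus Cohn (with Gauss--Lucas for the easy direction, exactly as in Lemma \ref{proj}); and get (iv) by peeling off one root and using $\ov{\mu}_{n-1}\mu_{j-1}=\ov{\mu}_{n-j}$ together with $\zeta_n\mu_{n-1}=\theta_n^p$. The small slip $\omega=(-1)^n\theta_n^p$ (it should be $(-1)^n\ov{\theta}_n^{\,p}$) is harmless, and your remark about re-choosing $m$-th roots to match $\theta_n$ when lifting back to $\mb T^n$ is in fact more careful than the paper.

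The one step that does not hold up is your closing claim in (iv)$\Rightarrow$(i): that ``the conjugation identities forced by $\mu\in\partial\bl\Theta_{n-1}$ compel $|\zeta_n|=1$, hence $|\theta_n|=1$.'' With $\zeta_n:=\theta_n^p\ov{\mu}_{n-1}$ and $|\mu_{n-1}|=1$ you only get $|\zeta_n|=|\theta_n|^p$, and nothing in the relations $\theta_j=\mu_j+\ov{\mu}_{n-j}\theta_n^p$ constrains $|\theta_n|$: for any $\theta_n$ with $|\theta_n|\neq 1$ (even $\theta_n=0$) and any $\mu\in\partial\bl\Theta_{n-1}$, defining $\theta_j:=\mu_j+\ov{\mu}_{n-j}\theta_n^p$ satisfies (iv) verbatim while (i) fails by (iii). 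So $|\theta_n|=1$ cannot be derived; it has to be taken as part of the data in (iv). This mirrors what the paper itself does implicitly --- its proof of (iv)$\Rightarrow$(i) simply ``chooses $\lambda_n^m=\ov{\mu}_{n-1}\theta_n^p\in\mb T$,'' which already presupposes $|\theta_n|=1$ --- but your write-up asserts a derivation that is false as stated, so either add $|\theta_n|=1$ to (iv) (as in the $\Gamma_n$ analogue) or flag it as an assumption rather than a consequence.
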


\begin{proof}
 Throughout this proof, we put $\theta_0=1.$ Let $(\theta_1,\ldots, \theta_n)$ be in the Shilov boundary of $\bl\Theta_n.$
By definition, there are $\lambda_j\in \mathbb T, j=1,\ldots, n,$  such that
 \beq\label{elemen}
  \theta_j=\sum_{1\leq k_1<\ldots<k_j\leq n}\lambda_{k_1}^m\ldots \lambda_{k_j}^m\mbox{~for~} j=1,\ldots,n-1 \text{ and }  \theta_n^p =\lambda_{1}^m\ldots \lambda_{n}^m.
 \eeq
 This is equivalent to the fact that the polynomial $f$ given
 by
\beq\label{p}
f(z)=\sum_{i=0}^{n - 1} (-1)^{i}\theta_{i}z^{n - i}+(-1)^n\theta_n^p
\eeq
 has all its zeros on $\mathbb T.$ Moreover,  $\vert \theta_n\vert=1$ and $  \theta_n^p \ov\theta_j= \theta_{n-j}$  for $j=1,\ldots,n-1.$ It follows from Theorem \ref{Cohn}
 that the polynomial
\beq\label{dp}
f^\prime(z)=\sum_{j=0}^{n - 1}(-1)^{j}(n - j)\theta_{j}z^{n-j-1}
\eeq
has all its roots in $\overline{\mathbb D},$
 which is equivalent to the fact that $(\gamma_1
 \theta_1,\ldots,\gamma_{n-1}\theta_{n-1})\in\Gamma_{n-1},$ where
 $\gamma_j=\frac{n-j}{n}$ for $j=1,\ldots,n-1.$ Therefore (i) implies (ii).

 Conversely, considering the polynomial in Equation \eqref{p}, we observe that
 \beqa
z^n\overline{f\bigg(\frac{1}{\ov z}\bigg)}=\sum_{j=0}^{n-1}(-1)^j\ov \theta_jz^j +(-1)^n{\ov\theta_n}^pz^n\mbox{~ and~}
 z^n\overline{f\bigg(\frac{1}{\ov z}\bigg)}=(-1)^n{\ov\theta_n}^pf(z)
\eeqa
 by the first part of (ii). Therefore, $f$ is a self-inversive polynomial. Note that all the roots of $f^{\prime}$ lies in $\overline{\mathbb D}$ as $(\gamma_1\theta_1,\ldots,\gamma_{n-1}\theta_{n-1})\in\Gamma_{n-1},$  where
 $\gamma_j=\frac{n-j}{n}$ for $i=1,\ldots,n-1.$
 Thus, it follows from  Theorem \ref{Cohn}
 that $f$ has all  its roots on $\mathbb T.$ This is same as saying that $(\theta_1,\ldots,\theta_n)$ is in the Shilov boundary of $\bl\Theta_n.$

 Clearly, (i) implies (iii). To see the converse, we note that (iii) implies that there exist $\l_j\in\overline{\mb D}$ for $j=1,\ldots,n,$
 such that Equation \eqref{elemen} holds and $\vert \theta_n^p\vert=\vert\l_1^m\vert\ldots\vert\l_n^m\vert=1.$ So $ \l_j\in \mb T$ for
all $j=1,\ldots,n.$ It follows from the definition and Equation \eqref{elemen} that $(\theta_1,\ldots,\theta_n)$ is in the Shilov boundary of $\bl\Theta_n.$
 
 To show (i) implies (iv), let $(\theta_1,\ldots, \theta_{n})\in \partial\bl \Theta_n$. Clearly, $\theta_n^p =
 \l_1^m\ldots\l_n^m= e^{it}$ for some $t$ in $\mathbb R.$ So $\l_n^m=e^{it}\ov\l_1^m\ldots\bar\l_{n-1}^m.$ Since
 $$ \theta_j=\sum_{1\leq k_1< k_2 <\ldots <k_j\leq n} \l_{k_1}^m \cdots \l_{k_j}^m\mbox{~~with ~~}\vert \l_j\vert=1$$
 for $j=1,\ldots,n-1,$ we have
 \beqa
  \theta_j=\sum_{1\leq k_1< k_2 <\ldots <k_j\leq n} \l^m_{k_1} \cdots \l^m_{k_j}=\mu_j+\mu_{j-1} \l_n^m
  \eeqa
where $$\mu_j=\sum_{1\leq k_1< k_2 <\ldots <k_j\leq n-1} \l_{k_1}^m \cdots \l_{k_j}^m.$$
Thus $(\mu_1,\ldots,\mu_{n-1})\in \partial \bl\Theta_{n-1}$. Since $\l_n^m=e^{it}\ov\l_1^m\ldots\ov\l_{n-1}^m=\ov\mu_{n-1}e^{it}=\ov\mu_{n-1}\theta_n^p,$ we obtain
\beqa
\theta_j=\mu_j+\mu_{j-1}e^{it}\ov\l_1^m\ldots\ov\l_{n-1}^m=\mu_j+\mu_{j-1}\ov \mu_{n-1}e^{it}
  = \mu_j+\ov\mu_{n-j} \theta_n^p,
 \eeqa
 as $\ov\mu_{n-1}\mu_{j-1}=\ov\mu_{n-j},\ j=1,\ldots, n-1.$ Conversely, let $\mu_j = \theta_j(\bl\lambda)$ for some $\bl\lambda = (\lambda_1,\ldots,\lambda_{n - 1})\in\mb T^{n - 1}$. We choose $\lambda_n^m = {\ov\mu}_{n - 1}\theta_n^p\in\mb T$. Thus $\theta_j = \mu_j + \bar\mu_{n - j} \l_n^m\mu_{n - 1} = \mu_j + \mu_{j- 1}\l_n^m = \theta_j(\lambda_1,...,\lambda_n)$ and $\theta_n = \theta_n(\lambda_1,...,\lambda_n)$ and hence $(\theta_1,\ldots, \theta_{n})$ is in $\partial\bl\Theta_n$. Hence (iv) implies (i).
\end{proof}
 
 \begin{lem}\label{proj}
 Consider the map $\pi:\mb C^n\to\mb C^{n-1}$ defined by
 $$\pi(z_1,\ldots,z_n)=(\g_1z_1,\ldots,\g_{n-1}z_{n-1}) \text{~for~} \gamma_i=\frac{n-i}{n},\,\,i=1,\ldots,n-1.$$
 Then $\pi(\ov{\bl\Theta}_n)\subseteq\Gamma_{n-1}.$
   \end{lem}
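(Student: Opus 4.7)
The plan is to combine the polynomial-zero characterization of $\ov{\bl\Theta}_n$ recorded just after Equation~\eqref{p} with the Gauss--Lucas theorem (Theorem~\ref{GL}). Given $(\theta_1,\ldots,\theta_n)\in\ov{\bl\Theta}_n$, that characterization says the polynomial
\[
f(z)=\sum_{i=0}^{n-1}(-1)^{i}\theta_{i}z^{n-i}+(-1)^{n}\theta_{n}^{p}
\]
has all of its zeros in $\ov{\mb D}$. Since $\ov{\mb D}$ is convex, Theorem~\ref{GL} places every zero of $f^{\prime}$ in $\ov{\mb D}$ as well; this is the only substantive input.

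Next I would identify $\tfrac{1}{n}f^{\prime}$ as the elementary-symmetric generating polynomial of the candidate tuple $\pi(\theta_1,\ldots,\theta_n)$. A routine differentiation, together with the observation $n-j=n\gamma_j$, yields
\[
\frac{1}{n}f^{\prime}(z)=\sum_{j=0}^{n-1}(-1)^{j}\gamma_{j}\theta_{j}\,z^{n-1-j}.
\]
If $\mu_1,\ldots,\mu_{n-1}$ denote the zeros of $f^{\prime}$, each $\mu_k$ lies in $\ov{\mb D}$ by the previous step, and the Vieta relations read $s_j(\mu_1,\ldots,\mu_{n-1})=\gamma_j\theta_j$ for $j=1,\ldots,n-1$. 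Consequently $\pi(\theta_1,\ldots,\theta_n)=\bl s(\mu_1,\ldots,\mu_{n-1})\in\bl s(\ov{\mb D}^{n-1})=\Gamma_{n-1}$, which is precisely the desired inclusion.

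I do not anticipate any genuine obstacle: once one recognizes that differentiation annihilates the constant term $(-1)^{n}\theta_{n}^{p}$ and rescales the remaining coefficients by exactly the factors $\gamma_j$, the whole claim is a single application of Gauss--Lucas. The only items that need care are the bookkeeping of indices and the conceptual point that $\theta_n$ itself does not appear in $\pi$; it influences the conclusion only indirectly, through the zero locus of $f$.
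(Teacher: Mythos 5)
Your proposal is correct and follows essentially the same route as the paper: one passes from the point of $\ov{\bl\Theta}_n$ to the associated polynomial $f$ with zeros in $\ov{\mb D}$, applies Gauss--Lucas (Theorem~\ref{GL}) to place the zeros of $f^{\prime}$ in $\ov{\mb D}$, and reads off $(\gamma_1\theta_1,\ldots,\gamma_{n-1}\theta_{n-1})\in\Gamma_{n-1}$ from the coefficients of $\tfrac{1}{n}f^{\prime}$. Your explicit Vieta step merely spells out what the paper leaves implicit in ``hence we have the desired conclusion.''
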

\begin{proof}
Since $(\theta_1,\ldots, \theta_n) \in \ov{\bl\Theta}_n,$  the polynomial $f(z)=\sum_{j=1}^n (-1)^{n-j}\theta_{n-j}z^i+(-1)^n\theta_n^p,$ has all its roots in the closed unit disc $\overline{\mb D}.$ It follows from Theorem \ref{GL} that the polynomial $$f^\prime(z)=\sum_{j=0}^{n - 1}(-1)^{j}(n - j)\theta_{j}z^{n-j-1}$$ 
has all its roots
in $\ov{\mb D}$ as well. Hence we have the desired conclusion.
\end{proof}

We have an operator analogue of the previous Lemma.

\begin{lem}\label{projection}
 If $(T_1,\ldots, T_n)$ is a ${\bl\Theta}_n$-contraction, then $(\g_1T_1,\ldots,\g_{n-1}T_{n-1})$ is a $\Gamma_{n-1}$-contraction,
   where $\g_j=\frac{n-j}{n}$ for $j=1,\ldots,n-1.$
\end{lem}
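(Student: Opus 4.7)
The plan is to reduce the operator statement to the scalar statement already proved in Lemma \ref{proj} by lifting test polynomials from $\mb C^{n-1}$ to $\mb C^n$.

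First, I would fix an arbitrary polynomial $p\in\mb C[w_1,\ldots,w_{n-1}]$ and, to verify the spectral set condition for $(\gamma_1 T_1,\ldots,\gamma_{n-1}T_{n-1})$, would need to estimate $\|p(\gamma_1T_1,\ldots,\gamma_{n-1}T_{n-1})\|$ by $\sup_{\Gamma_{n-1}}|p|$. The natural trick is to introduce the polynomial $q\in\mb C[z_1,\ldots,z_n]$ defined by
\[
q(z_1,\ldots,z_n):= p(\gamma_1z_1,\ldots,\gamma_{n-1}z_{n-1}),
\]
which is independent of $z_n$. Then by construction $q(T_1,\ldots,T_n)=p(\gamma_1T_1,\ldots,\gamma_{n-1}T_{n-1})$, so the $\bl\Theta_n$-contractivity of $(T_1,\ldots,T_n)$ gives
\[
\|p(\gamma_1T_1,\ldots,\gamma_{n-1}T_{n-1})\|=\|q(T_1,\ldots,T_n)\|\le \sup_{\ov{\bl\Theta}_n}|q|.
\]

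Next I would translate the right-hand side back to a supremum on $\Gamma_{n-1}$. Because $q=p\circ\pi$ with $\pi$ as in Lemma \ref{proj}, one has $\sup_{\ov{\bl\Theta}_n}|q|=\sup_{\pi(\ov{\bl\Theta}_n)}|p|$, and then Lemma \ref{proj} yields $\pi(\ov{\bl\Theta}_n)\subseteq\Gamma_{n-1}$, hence
\[
\sup_{\pi(\ov{\bl\Theta}_n)}|p|\le \sup_{\Gamma_{n-1}}|p|.
\]
Concatenating the inequalities gives exactly the defining inequality for $(\gamma_1T_1,\ldots,\gamma_{n-1}T_{n-1})$ being a $\Gamma_{n-1}$-contraction.

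There is no genuine obstacle here: the only subtle point is that the spectral-set definition in \eqref{spec} demands the inequality for \emph{all} polynomials in $n$ variables, and the lifted polynomial $q$ happens to ignore the last variable, but the estimate still applies. Everything else is a direct transfer of the scalar inclusion $\pi(\ov{\bl\Theta}_n)\subseteq\Gamma_{n-1}$ from Lemma \ref{proj} through the polynomial functional calculus.
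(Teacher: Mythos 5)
Your argument is correct and coincides with the paper's proof: you lift $p$ to $q=p\circ\pi$, apply the spectral-set inequality for $\ov{\bl\Theta}_n$, and then invoke the inclusion $\pi(\ov{\bl\Theta}_n)\subseteq\Gamma_{n-1}$ from Lemma \ref{proj}, exactly as done in the paper. No gaps.
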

\begin{proof}
 For $f\in\mb C[z_1,\ldots z_{n-1}],$ we note that $f\circ \pi\in\mb C[z_1,\ldots,z_n]$ and by hypothesis
 \beqa
 \Vert f(\g_1T_1,\ldots,\g_{n-1}T_{n-1})\Vert&=&\Vert( f\circ\pi)(T_1,\ldots,T_n)\Vert\\
 &\leq& \Vert  f\circ\pi \Vert_{\infty,\ov{\bl\Theta}_n}\\
 &= & \Vert  f\Vert_{\infty,\pi(\ov{\bl\Theta}_n)}\leq \Vert  f\Vert_{\infty,\Gamma_{n-1}}.\\
 \eeqa This completes the proof.
\end{proof}


For a commuting tuple $\mathbf T=(T_1,\ldots,T_n)$ of operators on a Hilbert space $\m H,$ let $\bl\theta(\mathbf T)
:=(\theta_1(\mathbf T),\ldots,\theta_n(\mathbf T)).$ 

\begin{prop}\label{vN}
Let $(T_1,\ldots,T_n)$ be an tuple of commuting contractions.  Then $\bl\theta(\mathbf T)$  is a ${\bl\Theta}_n$-contraction if and only if $(T_1,\ldots,T_n)$ satisfies   von Neumann's inequality for all $G$-invariant polynomials in $\mb C[z_1,\ldots,z_n].$
\end{prop}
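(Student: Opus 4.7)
The plan is to exploit the Chevalley--Shephard--Todd theorem, which asserts that the algebra of $G$-invariant polynomials coincides with $\mb C[\theta_1,\ldots,\theta_n]$: every $q\in\mb C[z_1,\ldots,z_n]^G$ has a \emph{unique} representation $q=f\circ\bl\theta$ for some $f\in\mb C[w_1,\ldots,w_n]$, and conversely every such $f\circ\bl\theta$ is $G$-invariant. This sets up the bijection $f\leftrightarrow f\circ\bl\theta$ between $\mb C[w_1,\ldots,w_n]$ and $\mb C[z_1,\ldots,z_n]^G$ which is the engine of the proof.

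Next, I would record two straightforward compatibilities. First, by commutativity and polynomial functional calculus, $f(\bl\theta(\mathbf T)) = (f\circ\bl\theta)(\mathbf T)$ for every $f\in\mb C[w_1,\ldots,w_n]$; this is pure algebra in the commutative subalgebra generated by $T_1,\ldots,T_n$. Second, since $\bl\theta(\ov{\mb D}^n)=\ov{\bl\Theta}_n$,
\[
\sup_{\ov{\bl\Theta}_n}|f|\;=\;\sup_{\ov{\mb D}^n}|f\circ\bl\theta|.
\]

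Combining these two observations, the inequality $\|f(\bl\theta(\mathbf T))\|\le\|f\|_{\ov{\bl\Theta}_n,\infty}$ for a given $f\in\mb C[w_1,\ldots,w_n]$ is literally the same as $\|q(\mathbf T)\|\le\|q\|_{\ov{\mb D}^n,\infty}$ for $q:=f\circ\bl\theta$. Running $f$ over all of $\mb C[w_1,\ldots,w_n]$ and using Chevalley--Shephard--Todd, this $q$ traces out precisely $\mb C[z_1,\ldots,z_n]^G$. Hence the two conditions are equivalent, which is exactly what the proposition asserts.

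There is no real obstacle: the hypothesis that $(T_1,\ldots,T_n)$ is a commuting tuple of contractions ensures that the polynomial functional calculus and the von Neumann-type inequality are well-posed on $\ov{\mb D}^n$, while the actual mathematical content is delivered for free by Chevalley--Shephard--Todd combined with the identity $\bl\theta(\ov{\mb D}^n)=\ov{\bl\Theta}_n$. The only point worth stating explicitly is the uniqueness in the representation $q=f\circ\bl\theta$ (so that the correspondence $f\leftrightarrow q$ is bijective and no information is lost when passing between the two formulations of the inequality).
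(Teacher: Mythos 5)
Your proposal is correct and follows essentially the same route as the paper's proof: both rest on the identity $f(\bl\theta(\mathbf T))=(f\circ\bl\theta)(\mathbf T)$, the equality $\sup_{\ov{\bl\Theta}_n}|f|=\sup_{\ov{\mb D}^n}|f\circ\bl\theta|$ coming from $\bl\theta(\ov{\mb D}^n)=\ov{\bl\Theta}_n$, and the Chevalley--Shephard--Todd correspondence between arbitrary polynomials $f$ and $G$-invariant polynomials $q=f\circ\bl\theta$. The only cosmetic difference is that you phrase the two implications as a single bijective transfer of inequalities, whereas the paper writes out the two directions separately.
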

\begin{proof}
 Let $\mathbf T=(T_1,\ldots,T_n)$ be a commuting tuple of contractions satisfying the analogue of von Neumann's
inequality for all $G$-invariant polynomials in $\mb C[z_1,\ldots,z_n].$
 
For a polynomial $f\in\mb C[z_1,\ldots,z_n],$ we note that
\beqa
\Vert f(\bl\theta(\mathbf T))\Vert= \Vert (f\circ \bl\theta)( \mathbf T)\Vert
\leq \Vert f\circ \bl\theta\Vert_{\infty,\ov{\mb D}^n}
= \Vert f\Vert_{\infty,\bl\theta(\ov{\mb D}^n)},
\eeqa
since $f\circ\bl\theta $ is a $G$-invariant polynomial, the above inequality holds by hypothesis, and it shows that
$\ov{\bl\Theta}_n=\bl\theta(\ov{\mb D}^n)$ is a spectral set for $\bl\theta(\mathbf T),$ thus $\bl\theta(\mathbf T)$ is a ${\bl\Theta_n}$-contraction.

Conversely, let $\mathbf T=(T_1,\ldots,T_n)$ be  a tuple of commuting contractions such that $\bl \theta(\mathbf T)$ is a
${\bl\Theta}_n$-contraction and  $q\in\mb C[z_1,\ldots,z_n]$ be $G$-invariant. So there is a $f\in\mb C[z_1,\ldots,z_n]$ such that
$f\circ \bl \theta=q.$ By hypothesis, we have
\beqa
\Vert q(\mathbf T)\Vert=\Vert f( \bl \theta(\mathbf T))\Vert\leq \Vert f\Vert_{\infty,\ov{\bl\Theta}_n}
=\Vert f\circ \bl\theta \Vert_{\infty,\ov{\mb D}^n}=\Vert q\Vert_{\infty,\ov{\mb D}^n}.
\eeqa
\end{proof}

We note down the following corollary which will be relevant in the sequel.

\begin{cor}\label{res}
Let $\mathbf T=(T_1,\ldots,T_n)$ be a commuting tuple of contractions on a Hilbert space $\m H$ satisfying  von Neumann's inequality for all $G$-invariant polynomials in $\mb C[z_1,\ldots,z_n].$ Let $\m M$ be a common invariant
subspace for $\theta_i(\mathbf T), i=1,\ldots,n.$ Then $(\theta_1(\mathbf T)\vert_{\m M},\ldots,\theta_n(\mathbf T)\vert_{\m M})$ is a
$\bl\Theta_n$-contraction on the Hilbert space $\m M.$
\end{cor}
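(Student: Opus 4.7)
The plan is to combine Proposition \ref{vN} with the standard observation that spectral sets are preserved under restriction to a common invariant subspace. Since $\mathbf T$ is a commuting tuple of contractions satisfying von Neumann's inequality for all $G$-invariant polynomials, Proposition \ref{vN} immediately gives that $\bl\theta(\mathbf T) = (\theta_1(\mathbf T), \ldots, \theta_n(\mathbf T))$ is a $\bl\Theta_n$-contraction on $\m H$. So the work reduces to transferring the spectral-set property from $\m H$ down to $\m M$.

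For this, I would fix an arbitrary polynomial $f \in \mb C[z_1,\ldots,z_n]$ and argue as follows. Since $\m M$ is a common invariant subspace for $\theta_1(\mathbf T), \ldots, \theta_n(\mathbf T)$, and these operators commute, $\m M$ is invariant under every polynomial expression in them; in particular,
\[
f\bigl(\theta_1(\mathbf T)|_{\m M}, \ldots, \theta_n(\mathbf T)|_{\m M}\bigr) = f\bigl(\theta_1(\mathbf T), \ldots, \theta_n(\mathbf T)\bigr)\big|_{\m M}.
\]
Taking operator norms and using that restriction to an invariant subspace does not increase the norm,
\[
\bigl\|f\bigl(\theta_1(\mathbf T)|_{\m M}, \ldots, \theta_n(\mathbf T)|_{\m M}\bigr)\bigr\| \le \bigl\|f\bigl(\theta_1(\mathbf T), \ldots, \theta_n(\mathbf T)\bigr)\bigr\| \le \|f\|_{\infty, \ov{\bl\Theta}_n},
\]
where the last inequality is precisely the fact from Proposition \ref{vN} that $\bl\theta(\mathbf T)$ is a $\bl\Theta_n$-contraction. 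This verifies \eqref{spec} for the restricted tuple and completes the argument.

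There is essentially no obstacle here; the only point requiring a moment of care is the interchange of the polynomial functional calculus with restriction to $\m M$, which is justified because the joint polynomial calculus for commuting tuples is built coordinatewise from sums and products, each of which preserves the invariant subspace. Hence the result is an immediate corollary of Proposition \ref{vN} and the elementary fact that spectral sets descend to common invariant subspaces.
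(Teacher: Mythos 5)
Your proposal is correct and is essentially identical to the paper's own argument: both invoke Proposition \ref{vN} to see that $\bl\theta(\mathbf T)$ is a $\bl\Theta_n$-contraction on $\m H$, then use $f\bigl(\theta_1(\mathbf T)|_{\m M},\ldots,\theta_n(\mathbf T)|_{\m M}\bigr)=f\bigl(\theta_1(\mathbf T),\ldots,\theta_n(\mathbf T)\bigr)\big|_{\m M}$ together with the fact that restriction to an invariant subspace does not increase the norm. No gaps; nothing further is needed.
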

\begin{proof}
 For a polynomial $f\in\mb C[z_1,\ldots,z_n],$ we note that
\beqa
\Vert f(\theta_1(\mathbf T)\vert_{\m M},\ldots, \theta_n(\mathbf T)\vert_{\m M})\Vert
&=&\Vert f(\theta_1(\mathbf T),\ldots, \theta_n(\mathbf T))\vert_{\m M}\Vert\\
&\leq& \Vert f(\theta_1(\mathbf T),\ldots, \theta_n(\mathbf T))\Vert\\
&\leq& \Vert f\Vert_{\infty,\bl\Theta_n}.
\eeqa
 Since $\bl\theta(\mathbf T)=(\theta_1(\mathbf T),\ldots,\theta_n(\mathbf T))$ is a $\bl\Theta_n$-contraction on $\m H$ by Proposition \ref{vN},  the last inequality holds. Hence we have the desired conclusion.
 \end{proof}
\begin{rem}
    We make a few immediate observations for a $\bl\Theta_n$-contraction below.
\begin{enumerate}\label{ex} \item [1.] Evidently, if $(T_1,\ldots,T_n)$ is a $\bl\Theta_n$-contraction on a
Hilbert space $\m H$ and $\m M$ is a common invariant subspace  for $T_i,i=1,\ldots,n,$ then
$(T_1\vert_{\m M},\ldots,T_n\vert_{\m M})$ is a $\bl\Theta_n$-contraction on $\m M.$

\item[2.]As an immediate consequence of Proposition \ref{vN}, we conclude that if $\mathbf T=(T_1,\ldots,T_n)$ is a commuting tuple in any of the classes of commuting contractions discussed in \cite{GKVW}, then $\bl\theta(\mathbf T)=(\theta_1(\mathbf T),\ldots,\theta_n(\mathbf T))$ is a  $\bl\Theta_n$-contraction. 

\item[3.] In view of \cite[Theorem 1.1]{Hartz}, if $\mathbf T-(T_1,\ldots, T_n)$ is an $n$-variable weighted shift such that each $T_j,\,\, j=1,\ldots, n$ is a contraction, then $\bl\theta(\mathbf T)=(\theta_1(\mathbf T),\ldots,\theta_n(\mathbf T))$ is a  $\bl\Theta_n$-contraction by Proposition \ref{vN}.
\end{enumerate}
\end{rem}


 $$
 $$
\section{$\bl\Theta_n$-unitaries and $\bl\Theta_n$-isometries}
We start with the following obvious generalizations of definitions in \cite{AY}.
\begin{defn}
 Let $T_1,\ldots,T_n$ be  commuting operators  on a Hilbert space $\mathcal H$. We say that $(T_1,\ldots,T_n)$ is
\begin{enumerate}
 \item [(i)] a $\bl\Theta_n$-{\it unitary} if $T_i, i=1,\ldots,n$ are normal operators and the joint spectrum
$\sigma(T_1,\ldots,T_n)$ of $(T_1,\ldots,T_n)$ is contained in the distinguished boundary of $\bl\Theta_n.$
\item[(ii)] a $\bl\Theta_n$-{\it isometry} if there exists a Hilbert space $\mathcal K$ containing $\mathcal H$ and a $\bl\Theta_n$-unitary $(\wi T_1,\ldots,\wi T_n)$ on $\m K$ such that $\m H$ is invariant for $\wi T_i, i=1,\ldots,n$
and $T_i={\wi T_i}\vert_{\m H}, i=1,\ldots,n.$
\item[(iii)] a $\bl\Theta_n$-{\it co-isometry} if $(T_1^*,\ldots,T_n^*)$ is a $\bl\Theta_n$-isometry.
\item[(iv)] a \emph{pure $\bl\Theta_n$-isometry} if $(T_1,\ldots,T_n)$ is a $\bl\Theta_n$-isometry and $T_n$ is a pure isometry.
\end{enumerate}

\end{defn}

The proof of the following theorem works along the lines of Agler and Young\cite{AY}.

 \begin{thm}\label{u}
  Let $T_j, \, j=1,\ldots,n,$ be commuting operators on a Hilbert space $\mathcal H.$ The following are equivalent:
  \begin{enumerate}
   \item[(i)] $(T_1,\ldots,T_n)$ is a $\bl\Theta_n$-unitary;
   \item[(ii)] $T_n^*T_n=I=T_nT_n^*,\, (T_n^p)^*T_j=T_{n-j}^*$ and $(\g_1T_1,\ldots,\g_{n-1}T_{n-1})$ is a $\Gamma_{n-1}$-contraction,
   where $\g_j=\frac{n-j}{n}$ for $j=1,\ldots,n-1$;
   \item[(iii)] there exist commuting unitary operators $U_j$ for $j=1,\ldots,n$ on $\mathcal H$ such that
   $$T_j=\sum_{1\leq k_1<\ldots<k_j\leq n}U_{k_1}^m\ldots U_{k_j}^m\mbox{~for~} j=1,\ldots,n-1 \mbox{~and~}  T_n^p=\big(\prod_{i=1}^n U_i\big)^m. $$
  \end{enumerate}
 \end{thm}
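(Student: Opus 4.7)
The plan is to establish the cycle (iii) $\Rightarrow$ (i) $\Rightarrow$ (ii) $\Rightarrow$ (iii), with the substantive content concentrated in the last implication. The first two implications are applications of the functional calculus for commuting normal tuples; the third must both produce the unitary ``$m$-th root'' operators $U_j$ and bootstrap the missing normality of the intermediate $T_j$'s.

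For (iii) $\Rightarrow$ (i), each $T_j$ is a polynomial in the commuting normal tuple $(U_1,\ldots,U_n)$, so the $T_j$'s commute and are normal; by the spectral mapping theorem together with Equation \eqref{shilovboundary},
$$
\sigma(T_1,\ldots,T_n)=\bl\theta\!\left(\sigma(U_1,\ldots,U_n)\right)\subseteq\bl\theta(\mb T^n)=\partial\bl\Theta_n.
$$
For (i) $\Rightarrow$ (ii), I would integrate the pointwise identities of Theorem \ref{bg}(ii) against the joint spectral measure $E$ of $(T_1,\ldots,T_n)$ on $\partial\bl\Theta_n$: the condition $|\theta_n|=1$ yields $T_n^*T_n=T_nT_n^*=I$, the identity $\theta_n^p\bar\theta_j=\theta_{n-j}$ yields $(T_n^p)^*T_j=T_{n-j}^*$, and the pointwise containment $(\gamma_1\theta_1,\ldots,\gamma_{n-1}\theta_{n-1})\in\Gamma_{n-1}$ gives, via the spectral theorem together with the polynomial convexity of $\Gamma_{n-1}$, that $(\gamma_1T_1,\ldots,\gamma_{n-1}T_{n-1})$ is a $\Gamma_{n-1}$-contraction.

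For (ii) $\Rightarrow$ (iii), I would first apply Fuglede--Putnam to the unitary $T_n$ and the commuting tuple to deduce that $T_n$ commutes with every $T_j^*$. Rewriting $(T_n^p)^*T_j=T_{n-j}^*$ as $T_j=T_n^pT_{n-j}^*$ and using this commutation yields the crossed identities $T_jT_j^*=T_{n-j}^*T_{n-j}$ and $T_j^*T_j=T_{n-j}T_{n-j}^*$; these already give normality of $T_{n/2}$ when $n$ is even, and must be combined with the $\Gamma_{n-1}$-contraction hypothesis in order to propagate normality to every $T_j$. Once all $T_j$'s are normal and the algebraic relations force the Taylor spectrum of $(T_1,\ldots,T_n)$ to satisfy the three conditions of Theorem \ref{bg}(ii), the spectrum lies in $\partial\bl\Theta_n$, reducing matters to constructing the $U_j$'s from (i). For that I would lift the joint spectral measure $E$ along the proper, finite-to-one continuous surjection $\bl\theta\colon\mb T^n\to\partial\bl\Theta_n$: a Borel section $\bl\phi\colon\partial\bl\Theta_n\to\mb T^n$ of $\bl\theta$ is furnished by a standard measurable selection theorem, the pushforward $F=\bl\phi_*E$ is a spectral measure on $\mb T^n$ with $\bl\theta_*F=E$, and setting $U_j=\int z_j\,dF(z)$ produces commuting unitaries satisfying $s_j(U_1^m,\ldots,U_n^m)=T_j$ for $j<n$ and $(U_1\cdots U_n)^m=T_n^p$.

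The main obstacle is the normality step in (ii) $\Rightarrow$ (iii): the algebraic relations only yield the crossed identities $T_jT_j^*=T_{n-j}^*T_{n-j}$ and $T_j^*T_j=T_{n-j}T_{n-j}^*$ rather than the diagonal equality $T_jT_j^*=T_j^*T_j$ required for normality, so the $\Gamma_{n-1}$-contraction hypothesis on $(\gamma_1T_1,\ldots,\gamma_{n-1}T_{n-1})$ must be exploited carefully---presumably by induction on $n$ using the recursive Shilov-boundary description of Theorem \ref{bg}(iv), mirroring the dilation argument of Agler--Young in \cite{AY}.
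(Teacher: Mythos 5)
The first two legs of your cycle are essentially sound and close to the paper, which runs the cycle in the opposite order (i)$\Rightarrow$(iii)$\Rightarrow$(ii)$\Rightarrow$(i); in particular, your construction of the $U_j$'s from a measurable section of $\bl\theta|_{\mb T^n}$ and the pushforward of the joint spectral measure is exactly the paper's (i)$\Rightarrow$(iii) argument. (One small caveat you share with the paper's wording: condition (iii) prescribes only $T_n^p$, not $T_n$, so ``each $T_j$ is a polynomial in $(U_1,\ldots,U_n)$'' needs care for $j=n$.) The genuine gap is in (ii)$\Rightarrow$(iii), at precisely the point you flag as ``the main obstacle'': you leave the normality of $T_1,\ldots,T_{n-1}$ unproved and propose, ``presumably by induction on $n$'' via Theorem \ref{bg}(iv) and the $\Gamma_{n-1}$-hypothesis, an argument you do not carry out. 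That detour is both missing and unnecessary: normality follows in two lines from the algebraic relations alone. From $(T_n^p)^*T_j=T_{n-j}^*$, Fuglede's theorem (so $T_n^*$ commutes with every $T_j$) and unitarity of $T_n$, you correctly obtained $T_jT_j^*=T_{n-j}^*T_{n-j}$. Now compute the same quantity the other way: $T_{n-j}^*T_{n-j}=(T_n^p)^*T_j\,T_{n-j}=(T_n^p)^*T_{n-j}\,T_j=T_j^*T_j$, using the commutativity of $T_j$ and $T_{n-j}$ together with the relation at index $n-j$, namely $(T_n^p)^*T_{n-j}=T_j^*$. Hence $T_jT_j^*=T_{n-j}^*T_{n-j}=T_j^*T_j$ for every $j$, with no induction and no use of the $\Gamma_{n-1}$-hypothesis; your second ``crossed'' identity $T_j^*T_j=T_{n-j}T_{n-j}^*$ is true but is simply not the useful one.

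Relatedly, your sketch says that once normality is known ``the algebraic relations force the Taylor spectrum to satisfy the three conditions of Theorem \ref{bg}(ii)''; they only force the first two ($|\theta_n|=1$ and $\bar\theta_n^{\,p}\theta_j=\bar\theta_{n-j}$ on the spectrum). The third condition is exactly where the $\Gamma_{n-1}$-contraction hypothesis is consumed: as in the paper, pass to the commutative $C^*$-algebra $C^*(T_1,\ldots,T_n)$ and use the Gelfand transform to see that $(\g_1\theta_1,\ldots,\g_{n-1}\theta_{n-1})$ lies in the polynomially convex hull of $\Gamma_{n-1}$, hence in $\Gamma_{n-1}$ by its polynomial convexity, so Theorem \ref{bg} places the joint spectrum in $\partial\bl\Theta_n$. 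With the two-line normality argument and this spectral-containment step made explicit, your route (proving (ii)$\Rightarrow$(i) and then reusing the measurable-selection construction for the passage to (iii)) becomes a complete proof and coincides in substance with the paper's.
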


\begin{proof}Suppose (i) holds. Let $(T_1,\ldots, T_n)$ be a $\bl\Theta_n$-unitary. By the spectral theorem for commuting normal operators, there exists a spectral measure $E(\cdot)$ on $\sigma(T_1,\ldots,T_n)$ such that
 $$
 T_j = \int_{\sigma(T_1,\ldots,T_n)} z_j E(d\bl z),\, j=1,\ldots,n,
 $$
 where $z_1,...,z_n$ are the co-ordinate functions on $\C^n$. Let $\tau$ be a measurable right inverse of the restriction of $\bl \theta$ to $\mathbb T^n,$ so that $\tau$ maps distinguished boundary of $\bl\Theta_n$ to $\mathbb T^n$. Let $\tau = (\tau_1,\ldots,\tau_n)$ and
 $$
 U_j = \int_{\sigma(T_1,\ldots,T_n)}\tau_i(\bl z) E(d\bl z),\, j=1,\ldots,n.
 $$
 Clearly, $U_1,\ldots,U_n$ are commuting unitaries on $\mathcal H$ and
 \beqa
 \sum_{1\leq k_1<\ldots<kj\leq n}U_{k_1}^m\ldots U_{k_j}^m &=& \sum_{1\leq k_1<\ldots<k_j\leq n}\int_{\sigma(T_1,\ldots,T_n)}\tau_{k_1}(\bl z)\ldots \tau_{k_j}(\bl z) E(dz)\\ &=& \int_{\sigma(T_1,\ldots,T_n)}\theta_j(\bl z) E(d\bl z)\\ &=& T_j,
 \eeqa
 for $j=1,\ldots,n$. Hence (i) implies (iii).  

 Suppose (iii) holds. Then clearly,  $T_n^*T_n=I=T_nT_n^*$ and $ (T_n^p)^*T_j=T_{n-j}^*, j=1,\ldots,n.$ Since
 $(T_1,\ldots,T_n)$ is a $\bl\Theta_n$-contraction by Proposition \ref{vN}, it follows from Lemma \ref{projection} that $(\g_1T_1,\ldots,\g_{n-1}T_{n-1})$ is a $\Gamma_{n-1}$-contraction, where $\g_j=\frac{n-j}{n}$ for $j=1,\ldots,n-1.$ Hence (iii) implies (ii).

 Suppose (ii) holds. Since $T_n$ is normal, it follows from Fuglede's theorem that
 $$T_{n-i}^*T_{n-i} = (T_n^p)^*T_iT_i^*T_n^p = T_iT_i^*.$$ 
 Since $T_i$'s are  commuting operators, 
 $$T_{n-i}^*T_{n-i} = (T_n^p)^*T_iT_{n-i} = (T_n^p)^*T_{n-i}T_i =T_i^*T_i.$$  
 Thus $T_i, i= 1,\ldots,n$ is normal for $i=1,\ldots,n.$ Therefore, the unital $C^*$-algebra $C^*(T_1,\ldots, T_n)$ generated by $T_1,\ldots, T_n$ is commutative and by the Gelfand-Naimark's theorem is $*$-isometrically isomorphic to $C(\sigma(T_1,\ldots, T_n)).$ Let ${\hat T}_i$ be the image of $T_i$ for $i=1,\ldots,n,$ under the Gelfand map. By definition, for an arbitrary point ${\bl z} = (\theta_1,\ldots, \theta_n)$ in $\sigma(T_1,\ldots, T_n)$, ${\hat T}_i({\bl z}) = \theta_i$ for $i=1,\ldots,n.$ By the properties of the Gelfand map and the hypothesis we have,
$$
 \overline{\hat T_n({\bl z})}\hat T_n({\bl z})= 1 = \hat T_n({\bl z}) \overline{\hat T_n({\bl z})}\mbox{~and~} \overline{\hat T_n({\bl z})^p}\hat T_i({\bl z}) = \overline{\hat T_{n -i}({\bl z})}\mbox{~for~} \bl z \in\sigma(T_1,\ldots, T_n).
 $$
 Thus we obtain $\vert \theta_n\vert=1, {\bar \theta_n}^p \theta_i=\bar \theta_{n-i}.$ Now $(\g_1T_1,\ldots,\g_{n-1}T_{n-1})$ is a $\Gamma_{n-1}$-contraction implies
 $$
 \Vert f(\g_1T_1,\ldots,\g_{n-1}T_{n-1})\Vert \leq \Vert f\Vert_{\infty,\Gamma_{n-1}} \mbox{~for~} f\in\mb C[z_1,\ldots,z_n],
 $$
 which is equivalent to the fact that $\Vert f\Vert_{\infty,\Gamma_{n-1}}^2 - f(\g_1T_1,\ldots,\g_{n-1}T_{n-1})^*f(\g_1T_1,\ldots,\g_{n-1}T_{n-1})$ is a positive operator for $f\in\mb C[z_1,\ldots,z_n].$ An application of the Gelfand transform yields
 $$
 \Vert f\Vert_{\infty,\Gamma_{n-1}}^2 -\ov{ f(\g_1\hat T_1({\bl z}),\ldots,\g_{n-1}\hat T_{n-1}({\bl z}))}f(\g_1\hat T_1({\bl z}),\ldots,\g_{n-1}\hat T_{n-1}({\bl z}))\geq 0 \mbox{~for~} f\in\mb C[z_1,\ldots,z_n] $$
 and  $\bl z \in \sigma(T_1,\ldots, T_n).$ This proves that $(\g_1\theta_1,\ldots,\g_{n-1}\theta_{n-1})$ is in the polynomially convex hull of $\Gamma_{n-1}.$ Since $\Gamma_{n-1}$ is polynomially convex, $(\g_1\theta_1,\ldots,\g_{n-1}\theta_{n-1})$ is in $\Gamma_{n-1}.$ Therefore, by Theorem \ref{bg}, $(\theta_1,\ldots,\theta_n)$ is in the Shilov boundary of $\bl\Theta_n$ and hence $\sigma(T_1,\ldots, T_n)\subset \partial\bl\Theta_n$. This proves (ii) implies (i).
 \end{proof}

To proceed further, we introduce some spaces of vector-valued and operator-valued functions for our exposition, we recall them following \cite{AY}.
Let $\m E$ be a separable Hilbert space and  $\m L(\m E)$ be the $C^*$-algebra of bounded operators on $\m E,$ with the operator norm. Let $H^2(\m E) $ be the  Hardy space of analytic $\m E$-valued functions on $\mb D$ and $L^2(\m E)$ be the Hilbert space of square integrable $\m E$-valued functions on $\mb T,$ with their usual inner products. Let $H^\infty\m L(\m E)$ be the space of $\m L(\m E)$-valued bounded analytic  functions on $\mb D $ and $L^\infty\m L(\m E)$ the space of bounded measurable
$\m L(\m E)$-valued functions on $\mb T,$ each with appropriate version of the supremum norm. For
$\v\in L^\infty \m L(\m E)$ let  $T_\v$ be the Toeplitz operator with symbol $\v,$ given by
$$T_\v f=P(\v f),~~~~~ f\in H^2(\m E),$$
 $P:L^2(\m E)\lo H^2(\m E)$ being the orthogonal projection. In particular $T_z$ is the unilateral shift operator on
$H^2(\m E)$ ($z$ denotes the identity function on $\mb T$ ).

  We need the following lemma to characterize a a pure $\bl\Theta_n$-isometry.

\begin{lem}\label{taut}
 Let $\Phi_1,\ldots,\Phi_n$ be functions in $L^\infty\mathcal L(\mathcal E)$ and $M_{\Phi_i},\,i=1\ldots,n$, denote the corresponding multiplication operator on $L^2(\mathcal E).$ Then $(M_{\Phi_1},\ldots,M_{\Phi_n})$ is a $\bl\Theta_n$-contraction if and only if $(\Phi_1(z),\ldots,\Phi_n(z))$ is a $\bl\Theta_n$-contraction for all $z$ in $\mathbb T.$
\end{lem}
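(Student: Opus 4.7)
The plan is to reduce everything to the basic fact that for $\Psi\in L^\infty\mathcal L(\mathcal E)$, the operator norm of the multiplication operator $M_\Psi$ equals $\operatorname{ess\,sup}_{z\in\mb T}\|\Psi(z)\|$, together with the identity $f(M_{\Phi_1},\ldots,M_{\Phi_n})=M_{f(\Phi_1,\ldots,\Phi_n)}$ valid for every polynomial $f\in\mb C[z_1,\ldots,z_n]$ whenever $\Phi_1,\ldots,\Phi_n$ commute pointwise a.e.\ (a condition automatic from the commutativity of the multiplication operators in one direction, and from the pointwise $\bl\Theta_n$-contraction hypothesis in the other).

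For the \emph{if} direction, assume $(\Phi_1(z),\ldots,\Phi_n(z))$ is a $\bl\Theta_n$-contraction for a.e.\ $z\in\mb T$. Fix a polynomial $f\in\mb C[z_1,\ldots,z_n]$. Then for a.e.\ $z$,
$$\|f(\Phi_1(z),\ldots,\Phi_n(z))\|\leq\|f\|_{\infty,\ov{\bl\Theta}_n}.$$
Taking the essential supremum and using the norm formula for multiplication operators yields
$$\|f(M_{\Phi_1},\ldots,M_{\Phi_n})\|=\|M_{f(\Phi_1,\ldots,\Phi_n)}\|=\operatorname{ess\,sup}_{z\in\mb T}\|f(\Phi_1(z),\ldots,\Phi_n(z))\|\leq\|f\|_{\infty,\ov{\bl\Theta}_n},$$
which is exactly the spectral set inequality \eqref{spec}.

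For the \emph{only if} direction, assume $(M_{\Phi_1},\ldots,M_{\Phi_n})$ is a $\bl\Theta_n$-contraction. For each polynomial $f$, the inequality
$$\operatorname{ess\,sup}_{z\in\mb T}\|f(\Phi_1(z),\ldots,\Phi_n(z))\|=\|f(M_{\Phi_1},\ldots,M_{\Phi_n})\|\leq\|f\|_{\infty,\ov{\bl\Theta}_n}$$
shows that the pointwise bound holds off a null set $N_f\subset\mb T$. The main technical point — and the only real obstacle — is to upgrade this to a single null set valid for all polynomials simultaneously. I would fix a countable dense subalgebra $\mathcal P\subset\mb C[z_1,\ldots,z_n]$ (for instance polynomials with coefficients in $\mb Q+i\mb Q$) and set $N=\bigcup_{f\in\mathcal P}N_f$, which is still a null set. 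On the complement $\mb T\setminus N$, the inequality $\|f(\Phi_1(z),\ldots,\Phi_n(z))\|\leq\|f\|_{\infty,\ov{\bl\Theta}_n}$ holds for every $f\in\mathcal P$.

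To extend this to every polynomial in $\mb C[z_1,\ldots,z_n]$, I would approximate: for an arbitrary polynomial $f$ of degree $d$, polynomials of degree at most $d$ in $\mathcal P$ converge to $f$ both in the sup-norm on $\ov{\bl\Theta}_n$ and in operator norm when evaluated at any commuting tuple of bounded operators (continuity of polynomial calculus in bounded degree). Passing to the limit preserves the inequality. This gives that $(\Phi_1(z),\ldots,\Phi_n(z))$ is a $\bl\Theta_n$-contraction for every $z\in\mb T\setminus N$, completing the equivalence. Commutativity of the tuple $(\Phi_1(z),\ldots,\Phi_n(z))$ for a.e.\ $z$ follows by applying the same pointwise/essential-supremum comparison to the commutator polynomials $z_iz_j-z_jz_i$.
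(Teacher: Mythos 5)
Your proposal is correct and follows essentially the same route as the paper: both rest on the two facts $\|M_\Psi\|=\|\Psi\|_\infty$ and $f(M_{\Phi_1},\ldots,M_{\Phi_n})=M_{f(\Phi_1,\ldots,\Phi_n)}$, so that the spectral-set inequality for the tuple of multiplication operators is the essential supremum of the pointwise inequalities. The only difference is that you spell out the measure-theoretic step (a single null set via a countable set of polynomials with rational coefficients, plus pointwise commutativity from the commutator polynomials), which the paper's terse proof leaves implicit.
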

\begin{proof} Note that  $\|M_{\Psi}\| = \|\Psi\|_{\infty}$ for $\Psi\in L^\infty\mathcal L(\mathcal E)$ and  
$ f(M_{\Phi_1},\ldots,M_{\Phi_n})=M_{f(\Phi_1,\ldots,\Phi_n)}$ for all polynomials $f\in \mb C[z_1,\ldots,z_n].$ So
\Bea
 \Vert f(M_{\Phi_1},\ldots,M_{\Phi_n})\Vert=\Vert M_{f(\Phi_1,\ldots,\Phi_n)}\Vert=\Vert f(\Phi_1,\ldots,\Phi_n)\Vert.
\Eea
Thus 
\Bea
\|f(M_{\Phi_1},\ldots,M_{\Phi_n})\| \leq \Vert f\Vert_{\infty,\bl\Theta_n,}
\Eea
if and only of 
$$
\|f(\Phi_1(z),\ldots,\Phi_n(z))\| \leq \Vert f\Vert_{\infty,\bl\Theta_n,} \text{~for all~} z\in\mb T,
$$ which completes the proof.

\end{proof}

\begin{rem}
 An interesting case in the above Lemma occurs  when $\dim \mathcal E = 1.$ In this case, $(\Phi_1(z),\ldots,\Phi_n(z))$ is a $\bl\Theta_n$-contraction means that $(\Phi_1(z),\ldots,\Phi_n(z))\in\bl\Theta_n$ for all $z\in\mb T$ which is true as $\bl\Theta_n$ is polynomially convex.
\end{rem}

\begin{thm}\label{pure iso}
  Let $T_i, \, i=1,\ldots,n,$ be commuting operators on a Hilbert space $\mathcal H$. Then $(T_1,\ldots, T_n)$ is a pure $\bl\Theta_n$-isometry if and only if there exist a separable Hilbert space $\mathcal E$ and a unitary operator $U: \mathcal H\ra H^2(\mathcal E)$ and functions $\Phi_1,\ldots,\Phi_{n-1}$ in $H^\infty \mathcal L(\mathcal E)$ and operators $A_i \in \mathcal L(\mathcal E)\,i=1,\ldots,n-1$ such that
  \begin{enumerate}
   \item[(i)] $T_i = U^*T_{\Phi_i}U,\, i=1,\ldots,n-1,\, T_n = U^*T_zU$;
   \item[(ii)] $(\g_1\Phi_1(z),\ldots,\g_{n-1}\Phi_{n-1}(z))$ is a $\Gamma_{n-1}$-contraction for all $z$ in $\mathbb T,$ where $\g_i=\frac{n-i}{n}$ for $i=1,\ldots,n-1$;
   \item[(iii)] $\Phi_i(z) = \sum_{\ell=0}^pA_\ell^{(i)}z^\ell$ 
   for some $A_\ell^{(i)}\in\mathcal L(\mathcal E)$ satisfying ${A_\ell^{(i)}}^* = A^{(n-i)}_{p-\ell}$  for $\ell=0,\ldots,p$ and $i=1\ldots  n-1.$
 \item[(iv)] $\sum_{\ell=0}^k[A_\ell^{(i)}, A_{k-\ell}^{(j)}] = 0$  for $k=0,\ldots, 2p$ and $i,j=1\ldots   n-1,$ where $[P,Q]=PQ-QP$ for two operators $P,Q.$
  \end{enumerate}

 \end{thm}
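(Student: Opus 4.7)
My plan is to follow the blueprint of Agler--Young \cite{AY} adapted to the $\bl\Theta_n$ setting. The forward direction proceeds via the Wold decomposition of $T_n$, identification of the commutant of the bilateral shift, and the spectral characterisation of $\bl\Theta_n$-unitaries from Theorem \ref{u}(ii); the converse is a direct verification that the natural extension to $L^2(\m E)$ is a $\bl\Theta_n$-unitary.

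For the forward direction, assume $(T_1,\ldots,T_n)$ is a pure $\bl\Theta_n$-isometry. Since $T_n$ is a pure isometry, the Wold decomposition produces a unitary $U:\m H\to H^2(\m E)$, with $\m E=\ker T_n^*$, such that $UT_nU^*=T_z$. Each $T_i$ commutes with $T_n$, so $UT_iU^*$ commutes with $T_z$ and is therefore an analytic Toeplitz operator $T_{\Phi_i}$ with $\Phi_i\in H^\infty\m L(\m E)$, which yields (i).

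To obtain (ii)--(iv) I consider the minimal $\bl\Theta_n$-unitary extension $(\wi T_1,\ldots,\wi T_n)$ on $\m K\supseteq\m H$. The minimal unitary extension of the pure isometry $T_z$ on $H^2(\m E)$ is the bilateral shift $M_z$ on $L^2(\m E)$, so after transport by $U$ I may take $\m K=L^2(\m E)$ and $\wi T_n=M_z$. Any bounded operator commuting with $M_z$ on $L^2(\m E)$ is of the form $M_\Psi$ for some $\Psi\in L^\infty\m L(\m E)$, hence $\wi T_i=M_{\Psi_i}$. The invariance $\wi T_i\m H\subseteq\m H$ together with $\wi T_i|_{\m H}=T_{\Phi_i}$ forces $\Psi_i\in H^\infty\m L(\m E)$ and $\Psi_i=\Phi_i$. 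Applying Theorem \ref{u}(ii) to the $\bl\Theta_n$-unitary $(M_{\Phi_1},\ldots,M_{\Phi_{n-1}},M_z)$: the $\Gamma_{n-1}$-contractivity of $(\gamma_1 M_{\Phi_1},\ldots,\gamma_{n-1}M_{\Phi_{n-1}})$ combined with Lemma \ref{taut} (applied with $\Gamma_{n-1}$ in place of $\bl\Theta_n$) yields (ii); the relation $(M_z^p)^*M_{\Phi_i}=M_{\Phi_{n-i}}^*$ translates on $\mb T$ into $z^{-p}\Phi_i(z)=\Phi_{n-i}(z)^*$, and expanding $\Phi_i(z)=\sum_{\ell\geq 0}A_\ell^{(i)}z^\ell$ and matching Fourier coefficients forces $\Phi_i$ to truncate at degree $p$ with ${A_\ell^{(i)}}^*=A_{p-\ell}^{(n-i)}$, giving (iii); commutativity of $M_{\Phi_i}$ and $M_{\Phi_j}$ is equivalent to $[\Phi_i(z),\Phi_j(z)]=0$ for a.e.\ $z\in\mb T$, and expanding the product gives (iv).

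For the converse, given data $(\m E, U, \Phi_1,\ldots,\Phi_{n-1})$ satisfying (i)--(iv), I will verify that $\bl M:=(M_{\Phi_1},\ldots,M_{\Phi_{n-1}},M_z)$ on $L^2(\m E)$ is a $\bl\Theta_n$-unitary by checking the three conditions of Theorem \ref{u}(ii): unitarity of $M_z$ is immediate; the identity $(M_z^p)^*M_{\Phi_i}=M_{\Phi_{n-i}}^*$ is a direct Fourier calculation using the symmetry in (iii); and the $\Gamma_{n-1}$-contractivity of $(\gamma_1 M_{\Phi_1},\ldots,\gamma_{n-1}M_{\Phi_{n-1}})$ follows from Lemma \ref{taut} applied to (ii). Condition (iv) ensures $\bl M$ is commuting. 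Since $H^2(\m E)$ is invariant under each $M_{\Phi_i}$ (as $\Phi_i\in H^\infty\m L(\m E)$) and under $M_z$, the restriction is a $\bl\Theta_n$-isometry; it is pure because $T_z=M_z|_{H^2(\m E)}$ is a pure isometry. Transporting back by $U^*$ yields the given tuple. The main obstacle is the identification step in the forward direction: starting from an abstract $\bl\Theta_n$-unitary extension, one must reduce without loss of generality to the minimal one on $L^2(\m E)$, apply the commutant description of $M_z$ to force each $\wi T_i$ to be a multiplication operator, and exploit the invariance of $H^2(\m E)$ to recover the same symbol $\Phi_i$. Once this identification is secured, (iii) and (iv) are elementary Fourier-coefficient bookkeeping.
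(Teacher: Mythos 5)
Your proposal is correct, and the converse direction coincides with the paper's argument verbatim (build $(M_{\Phi_1},\ldots,M_{\Phi_{n-1}},M_z)$ on $L^2(\m E)$, verify Theorem \ref{u}(ii), restrict to $H^2(\m E)$). The forward direction, however, takes a genuinely different route. The paper never identifies the ambient $\bl\Theta_n$-unitary concretely: it compresses the identities of Theorem \ref{u}(ii) from the extension $(\wi T_1,\ldots,\wi T_n)$ directly to the invariant subspace $\m H$ (getting $T_n^*T_n=I$ and $T_i^*T_n^p=T_{n-i}$), and then does all the work on $H^2(\m E)$ with analytic Toeplitz operators, so the relation $T_{\Phi_i}^*T_z^p=T_{\Phi_{n-i}}$ and the commutativity of the $T_{\Phi_i}$ deliver (iii) and (iv) by the same Fourier bookkeeping you use, while (ii) comes from Lemma \ref{projection} and Lemma \ref{taut} exactly as in your argument. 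You instead pass to the minimal unitary extension of $T_z$, identify it with the bilateral shift $M_z$ on $L^2(\m E)$, and use the commutant description of $M_z$ to realize the extension as $(M_{\Phi_1},\ldots,M_{\Phi_{n-1}},M_z)$. This buys a very transparent picture of the dilation (it is essentially the model used in the converse), but it costs you the one step you flag and do not prove: that you may assume the extension is minimal, i.e.\ that $\m K_0=\bigvee_{k\geq0}\wi T_n^{*k}\m H$ jointly reduces $(\wi T_1,\ldots,\wi T_n)$ and that the restriction is still a $\bl\Theta_n$-unitary. This is fillable: since $\wi T_n$ is unitary and commutes with $\wi T_i$, so does $\wi T_n^{*}$, whence $\wi T_i\m K_0\subseteq\m K_0$; and the relation $\wi T_i^*=(\wi T_n^p)^*\wi T_{n-i}$ from Theorem \ref{u}(ii) gives $\wi T_i^*\wi T_n^{*k}\m H\subseteq \wi T_n^{*(k+p)}\m H\subseteq\m K_0$, so $\m K_0$ is reducing; finally the restriction of a commuting normal tuple to a joint reducing subspace is normal with joint spectrum contained in that of the original tuple, hence still a $\bl\Theta_n$-unitary. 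With that paragraph inserted (or by simply compressing the relations to $\m H$ as the paper does, which sidesteps minimality altogether), your proof is complete.
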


  \begin{proof}
 Suppose $(T_1,\ldots, T_n)$ is a pure $\bl\Theta_n$-isometry. So, there exist a Hilbert space $\mathcal K$ and a $\bl\Theta_n$-unitary $(\tilde T_1,\ldots, \tilde T_n)$ such that $\mathcal H\subset\mathcal K$ is a common invariant subspace of $\tilde T_i$'s and $T_i = \tilde T_i|_{\mathcal H},\, i=1,\ldots,n$. From Theorem \ref{u}, it follows that $${\tilde T}_n^*{\tilde T}_n=I \mbox{~and~}
 {\tilde T}_i^*{\tilde T_n}^p={\tilde T}_{n-i}, \, 1\leq i\leq n-1.$$
 By compression of $\tilde T_i$ to $\mathcal H$, we have
 $$
 T_n^*T_n=I \mbox{~and~} T_i^*T_n^p=T_{n-i}, \, 1\leq i\leq n-1.
 $$
 Since $T_n$ is a pure isometry and $\mathcal H$ is separable, there exists a unitary operator $U: \mathcal H\ra H^2(\mathcal E)$ for some separable Hilbert space $\mathcal E$, such that $T_n = U^*T_zU$, where $T_z$ is the shift operator on $H^2(\mathcal E).$ Since $T_i$'s commute with $T_n$, there exist $\Phi_i\in H^\infty \mathcal L(\mathcal E)$ such that $T_i = U^*T_{\Phi_i}U$ for $1\leq i\leq n-1.$ Since  $(T_1,\ldots, T_n)$ is a $\bl\Theta_n$-contraction, an appeal to  Lemma \ref{projection} and Lemma \ref{taut} yields part (ii). 
 The relations $T_i^*T_n^p=T_{n-i}$ imply that
 $$
 T_{\Phi_i}^* T_z^p = T_{\Phi_{n-i}},\, 1\leq i\leq n-1.$$
 The last relations in turn yield
 $$\Phi_i(z)^*z^p=\Phi_{n-i}(z) \mbox{~for all~} z\in\mb T, \, i=1,\ldots, n-1.$$
 If $\Phi_i(z) = \sum_{\ell\geq 0}A_\ell^{(i)}z^\ell$ are in $H^\infty \mathcal L(\mathcal E)$ for $i=1,\ldots, n-1.$ Then $ {\Phi_i(z)}^*z^p=\Phi_{n-i}(z)$ for all $z\in\mb T$ implies

 $$
 \sum_{\ell\geq 0}{A_\ell^{(i)}}^* z^{p-\ell} = \sum_{\ell \geq 0}{A_\ell^{(n-i)}} z^\ell 
 \mbox{~for all~} z\in\mathbb T.$$ By comparison of the coefficients, we obtain
 $$
 A^{(i)}_\ell=A^{(n-i)}_\ell=0 \mbox{~for~}\ell>p \mbox{~and~} {A^{(i)}_\ell}^*=A^{(n-i)}_{p-\ell} \mbox{~for~} \ell=0,\ldots p,\,
 i=1,\ldots, n-1.$$   
 
 This shows that 
 $$\Phi_i(z)=\sum_{\ell=0}^p A_\ell^{(i)} z^\ell \mbox{~for some~} A_\ell^{(i)}\in\mathcal L(\mathcal E) \mbox{~satisfying~} {A_\ell^{(i)}}^* = A^{(n-i)}_{p-\ell}$$  for $\ell=0,\ldots,p$ and $i=1\ldots  n-1.$
 This proves part (iii). 
 
 Now since $T_i$'s commutes, we have  $T_{\Phi_i}T_{\Phi_j} = T_{\Phi_j}T_{\Phi_i}$ and consequently,
$$
\sum_{k=0}^{2p}\big(\sum_{\ell=0}^{k}A_\ell^{(i)}A_{k-\ell}^{(j)}\big)z^k=\sum_{k=0}^{2p}\big(\sum_{\ell=0}^{k}A_\ell^{(j)}A_{k-\ell}^{(i)}\big)z^k \mbox{~for all~} z\in\mathbb T.
$$
 Comparing  the coefficients of $z^k$ for $k=0,\ldots, 2p$ we get part (iv).

Conversely, let $(T_1,\ldots,T_n)$ be  a tuple satisfying conditions (i) to (iv). Consider the tuple $(T_{\Phi_1},\ldots,T_{\Phi_{n-1}},T_z)$ of multiplication operators on $L^2(\mathcal E)$ with symbols $\Phi_1,\ldots,\Phi_{n-1},z,$ respectively. The condition (iv) implies that $T_{\Phi_i}$'s are commuting operators. A part of condition (iii) shows that $(T_z^p)^* T_{\Phi_i} = T_{\Phi_{n-i}}^*$ by repeating analogous calculations as above. Thus, it follows from part (ii) of Theorem \ref{u}, that $(T_{\Phi_1},\ldots,T_{\Phi_{n-1}},T_z)$ is a $\bl\Theta_n$-unitary and so is $(U^*T_{\Phi_1}U,\ldots,U^*T_{\Phi_{n-1}}U,U^*T_zU)$. Now, $T_i$'s, $1\leq i\leq n$, are the restrictions to the common invariant subspace $H^2(\mathcal E)$ of $(T_{\Phi_1},\ldots,T_{\Phi_{n-1}},T_z)$ and hence $(T_1,\ldots,T_n)$ is a $\bl\Theta_n$-isometry. Since $T_n$ is a shift, $(T_1,\ldots,T_n)$ is a pure $\bl\Theta_n$-isometry.
 \end{proof}

 \begin{ex}
 For $n=2$ and $G(p,p, 2),\, \Phi_1(z)^*z^p=\Phi_1(z)$ for all $z \in \mb T.$ Putting $\Phi_1=\Phi$ and $\Phi(z)=\sum_{\ell= 0}^pA_\ell z^\ell.$ We have from Theorem \ref{pure iso} that $\Phi(z)=\sum_{\ell=0}^pA_\ell z^\ell$ and $A_\ell^*=A_{p-\ell}$ for $\ell=0,\ldots, p.$ In particular, for $p=2,\, \Phi(z)=A_0+A_1z+A_0^*z^2,$ where $A_0,A_1\in\m L(\m E)$ and $A_1^*=A_1.$
 \end{ex}

We now characterization of pure $\bl\Theta_n$-isometries in terms of the parameters
associated with them. For simplicity of notation, let $(T_\mathbf \Phi,T_z)$ denote the $n$-tuple of multiplication operators $(T_{\Phi_1},\ldots,T_{\Phi_{n-1}},T_z)$ on $H^2(\m E),$ where $\Phi_i\in H^\infty\m L(\m E),i=1,\ldots,n-1.$ We rely on the canonical identification of $H^2(\mathcal E)$ with $H^2\otimes\mathcal E$ by the map $z^n\xi \mapsto z^n\otimes\xi$, where $n\in\mathbb N\cup\{0\}$ and $\xi\in\mathcal E$, whenever necessary.

\begin{thm}\label{equi}
Let $\Phi_i(z) = \sum_{\ell=0}^pA_\ell^{(i)}z^\ell$ and $\tilde\Phi_i(z) = \sum_{\ell=0}^p\tilde A_\ell^{(i)}z^\ell$ be in  $H^\infty \mathcal L(\mathcal E)$ for some  $A_\ell^{(i)}\in\mathcal L(\mathcal E)$ and $\tilde A_\ell^{(i)}\in\mathcal L(\mathcal F)$ respectively, $i=1,\ldots,n-1$. Then the $n$-tuple $(T_\mathbf \Phi,T_z)$ on $H^2(\mathcal E)$ is unitarily equivalent to the $n$-tuple $(T_{\tilde{\mathbf\Phi}},T_z)$ on $H^2(\mathcal F)$ if and only if the tuples of operators $(A_\ell^{(i)})_{i=1,\ell = 0}^{{n-1}, p}$ and $(\tilde A_\ell^{(i)})_{i=1,\ell = 0}^{{n-1}, p}$ are unitarily equivalent.
\end{thm}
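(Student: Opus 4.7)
\medskip

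\noindent\textbf{Proof proposal.} The plan is to prove the two directions separately, with the easy direction being a direct computation and the non-trivial direction relying on the classical fact that operators intertwining two vector-valued shifts of the same multiplicity are induced by constant unitaries.

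For the sufficiency, suppose there is a unitary $V : \mathcal E \to \mathcal F$ such that $V A_\ell^{(i)} = \tilde A_\ell^{(i)} V$ for all $i=1,\ldots,n-1$ and $\ell = 0,\ldots, p$. Under the identification $H^2(\mathcal E) \cong H^2 \otimes \mathcal E$, set $W = I_{H^2}\otimes V : H^2(\mathcal E) \to H^2(\mathcal F)$. Then $W$ is unitary and clearly $W T_z = T_z W$. Moreover, at each $z\in\mathbb T$ we have $V\Phi_i(z) = V\sum_\ell A_\ell^{(i)} z^\ell = \sum_\ell \tilde A_\ell^{(i)} z^\ell V = \tilde\Phi_i(z) V$, which implies $W T_{\Phi_i} = T_{\tilde\Phi_i} W$ for each $i$. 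Hence the two tuples are unitarily equivalent.

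For the necessity, suppose $W : H^2(\mathcal E) \to H^2(\mathcal F)$ is a unitary satisfying $W T_z = T_z W$ and $W T_{\Phi_i} = T_{\tilde\Phi_i} W$ for $i=1,\ldots,n-1$. The key step is to deduce from the intertwining of the shifts that $W$ has the form $W = I_{H^2}\otimes V$ for some unitary $V : \mathcal E \to \mathcal F$. This follows from the standard Wold-type argument: since $W$ intertwines two pure isometries of multiplicity $\dim\mathcal E = \dim\mathcal F$, it must send the wandering subspace $\mathcal E$ (identified with the constants in $H^2(\mathcal E)$) isometrically onto $\mathcal F$, and then extend equivariantly under the shift, forcing $W$ to be multiplication by a constant unitary.

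With $W=I\otimes V$ in hand, the relation $W T_{\Phi_i} = T_{\tilde\Phi_i} W$ translates to $V\Phi_i(z) = \tilde\Phi_i(z)V$ for almost every (hence every, by continuity) $z\in\mathbb T$. Expanding both sides as $\sum_\ell V A_\ell^{(i)} z^\ell = \sum_\ell \tilde A_\ell^{(i)} V z^\ell$ and comparing Fourier coefficients gives $V A_\ell^{(i)} = \tilde A_\ell^{(i)} V$ for every $i$ and $\ell$, which is exactly the desired unitary equivalence of the coefficient tuples. The only real obstacle is justifying that the intertwiner $W$ is constant; once this standard fact is invoked, the rest is coefficient matching.
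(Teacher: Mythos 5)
Your proposal is correct and follows essentially the same route as the paper: the forward direction reduces to showing that a unitary intertwining the two vector-valued shifts must be of the form $I_{H^2}\otimes V$ for a constant unitary $V:\mathcal E\to\mathcal F$ (the paper invokes this fact directly, you sketch the standard wandering-subspace argument for it), after which comparing coefficients of the polynomial symbols $\Phi_i$, $\tilde\Phi_i$ yields the equivalence of the tuples $(A_\ell^{(i)})$ and $(\tilde A_\ell^{(i)})$; the converse is the same direct computation with $W=I_{H^2}\otimes V$.
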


\begin{proof}
Suppose the $n$-tuple $(T_\mathbf \Phi,T_z)$ on $H^2(\mathcal E)$ is unitarily equivalent to the $n$-tuple $(T_{\tilde{\mathbf\Phi}},T_z)$ on $H^2(\mathcal F)$. We can identify the map $\Phi_i$ (similarly $\tilde\Phi_i$) by $I_{H^2}\otimes A_0^{(i)} +  \sum_{\ell=1}^p  T_{z^\ell}\otimes A_\ell^{(i)}$. So, there exist a unitary $U : H^2\otimes\mathcal E\ra H^2\otimes\mathcal F$ such that
\beq\label{ue}
U(I_{H^2}\otimes A_0^{(i)} +  \sum_{\ell=1}^p  T_{z^\ell}\otimes A_\ell^{(i)})U^* = I_{H^2}\otimes \tilde A_0^{(i)} +  \sum_{\ell=1}^p  T_{z^\ell}\otimes \tilde A_\ell^{(i)},\, i=1,\ldots,n-1,
\eeq and
\beq\label{ue1}
U(T_z\otimes I_{\mathcal E})U^* = T_z\otimes I_{\mathcal F}.
\eeq
 From equation \eqref{ue1}, it follows that there exists a unitary $\tilde U :\mathcal E\ra\mathcal F$ such that $U = I_{H^2}\otimes \tilde U$. Consequently, the equation \eqref{ue} can be written as
 $$
 \tilde U A_0^{(i)} \tilde U^* + \sum_{\ell=1}^p \tilde U A_\ell^{(i)}\tilde U^* z^\ell =  \tilde A_0^{(i)} + \sum_{\ell=1}^p \tilde A_\ell^{(i)} z^\ell,\,i=1,\ldots,n-1,
 $$ for all $z\in\mathbb T$. Hence comparing the coefficients, we obtain $\tilde U A_\ell^{(i)} \tilde U^* = \tilde A_\ell^{(i)},\,i=1,\ldots,n-1,\, \ell = 0,\ldots, p$, which completes the proof in forward direction.

Conversely, suppose there exist a unitary $\tilde U :\mathcal E\ra\mathcal F$ that intertwines $A_\ell^{(i)}$ and $\tilde A_\ell^{(i)}$, that is, $\tilde U A_\ell^{(i)} \tilde U^* = \tilde A_\ell^{(i)}$ for each $i = 1,\ldots,n-1,\, \ell = 0,\ldots, p$. Let $U: H^2\otimes\mathcal E\ra H^2\otimes\mathcal F$ be the map defined by $U = I_{H^2}\otimes \tilde U$. Clearly, $U$ is a unitary and from the computations similar to above, it is easy to see that $U$ intertwines $T_{\Phi_i}$ with $T_{\tilde \Phi_i}$ for each $i=,1\ldots,n-1$, and $T_z$. This completes the proof.
\end{proof}

\begin{rem}
From condition (iii) of Theorem \ref{pure iso}, it is enough to check the equivalence of tuples of operators for $i= 1, \ldots, [\frac{n}{2}]$.
\end{rem}

 In the next theorem, we characterize $\bl\Theta_n$-isometries by proving    an analogue of Wold decomposition for $\bl\Theta_n$-isometries.  We apply the following Lemma \cite[Lemma 2.5]{AY} in  proving our result.
 \begin{lem}\label{ay}
  Let $U$ and $V$ be a unitary and a pure isometry on Hilbert space $\mathcal H_1, \mathcal H_2$ respectively, and let $T:\mathcal H_1\ra\mathcal H_2$ be an operator such that $TU = VT$. Then $T = 0.$
 \end{lem}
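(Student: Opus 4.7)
The plan is to combine two facts: first, that a pure isometry $V$ on $\mathcal H_2$ satisfies $\bigcap_{k\geq 0}V^k\mathcal H_2=\{0\}$ (this is the Wold decomposition definition of ``pure''); second, that a unitary $U$ is in particular invertible, so $U^{-k}$ exists as a bounded operator for every $k\geq 0$. The identity $TU=VT$ will propagate along powers and, when combined with invertibility of $U$, will force the range of $T$ into ever smaller subspaces $V^k\mathcal H_2$.

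First I would iterate the intertwining relation: a trivial induction on $k$ gives
$$TU^k = V^k T \qquad \text{for all } k\geq 0.$$
Indeed, assuming $TU^k=V^kT$, one has $TU^{k+1}=(TU^k)U=V^kTU=V^k(VT)=V^{k+1}T$.

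Next, I would use unitarity of $U$. Fix any $h\in\mathcal H_1$ and any $k\geq 0$; since $U^{-k}$ is a well-defined bounded operator on $\mathcal H_1$, I can write
$$Th \;=\; T\bigl(U^k U^{-k}h\bigr) \;=\; (TU^k)(U^{-k}h) \;=\; V^k\bigl(T U^{-k}h\bigr) \;\in\; V^k\mathcal H_2.$$
As $k\geq 0$ was arbitrary, this shows $Th\in\bigcap_{k\geq 0}V^k\mathcal H_2$. By purity of the isometry $V$, this intersection is $\{0\}$, so $Th=0$. Since $h\in\mathcal H_1$ was arbitrary, $T=0$, completing the proof.

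There is no real obstacle here; the argument is essentially a one-line consequence of combining the pure-isometry characterization with invertibility of $U$. The only subtle point worth flagging is to ensure that the chosen definition of ``pure isometry'' is the one used elsewhere in the paper (namely, $V$ has trivial unitary part in its Wold decomposition, equivalently $\bigcap_k V^k\mathcal H_2=\{0\}$), since the argument pivots entirely on that characterization.
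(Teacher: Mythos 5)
Your proof is correct: iterating $TU=VT$ to get $TU^k=V^kT$ and using invertibility of $U$ to conclude $\operatorname{ran}T\subseteq\bigcap_{k\geq 0}V^k\mathcal H_2=\{0\}$ is exactly the standard argument, and your flagged definition of purity (trivial unitary part in the Wold decomposition) is the one in force here. The paper itself does not prove this lemma — it imports it from Agler--Young \cite[Lemma 2.5]{AY} — so there is nothing further to compare; your argument (or the equivalent dual one using $V^{*k}\to 0$ strongly) is precisely what that reference does.
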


 \begin{thm}\label{wold}
  Let $T_i, \, i=1,\ldots,n,$ be commuting operators on a Hilbert space $\mathcal H$. The following are equivalent:
  \begin{enumerate}
   \item [(i)] $(T_1,\ldots, T_n)$ is a $\bl\Theta_n$-isometry;
  \item [(ii)] $T_n^*T_n=I,\, (T_n^p)^*T_i=T_{n-i}^*$ and $(\g_1T_1,\ldots,\g_{n-1}T_{n-1})$ is a $\Gamma_{n-1}$-contraction,
   where $\g_i=\frac{n-i}{n}$ for $i=1,\ldots,n-1;$
  \item [(iii)] (Wold decomposition) there exists an orthogonal decomposition $\mathcal H = \mathcal H_1\oplus\mathcal H_2$ into common reducing subspaces of $T_i$'s, $i=1,\ldots,n$ such that $(T_1|_{{\mathcal H}_1},\ldots, T_n|_{{\mathcal H}_1})$ is a $\bl\Theta_n$-unitary and $(T_1|_{{\mathcal H}_2},\ldots, T_n|_{{\mathcal H}_2})$ is a pure $\bl\Theta_n$-isometry.
  \end{enumerate}
\end{thm}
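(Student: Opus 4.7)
My plan is to set up the cycle $(i) \Rightarrow (ii) \Rightarrow (iii) \Rightarrow (i)$.

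For $(i) \Rightarrow (ii)$, I would take a $\bl\Theta_n$-unitary dilation $(\tilde T_1,\ldots,\tilde T_n)$ on $\mathcal K \supseteq \mathcal H$ with each $\tilde T_i$ leaving $\mathcal H$ invariant. Applying Theorem \ref{u} to the dilation gives $\tilde T_n$ unitary, $(\tilde T_n^p)^*\tilde T_i = \tilde T_{n-i}^*$, and $(\g_1\tilde T_1,\ldots,\g_{n-1}\tilde T_{n-1})$ a $\Gamma_{n-1}$-contraction. Writing $\tilde T_i = \bigl(\begin{smallmatrix} T_i & * \\ 0 & * \end{smallmatrix}\bigr)$ with respect to $\mathcal K = \mathcal H \oplus \mathcal H^\perp$ and reading off the $(1,1)$-block of this operator identity delivers $(T_n^p)^*T_i = T_{n-i}^*$; the $\Gamma_{n-1}$-contraction property passes to the invariant subspace $\mathcal H$, while $T_n^*T_n = I$ is immediate from $\tilde T_n$ being isometric.

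For $(ii) \Rightarrow (iii)$, I would first apply the classical Wold decomposition to the isometry $T_n$, obtaining $\mathcal H = \mathcal H_1 \oplus \mathcal H_2$ with $\mathcal H_1 = \bigcap_{k \geq 0} T_n^k \mathcal H$ on which $T_n$ is unitary and $\mathcal H_2$ on which $T_n$ is a pure isometry. The crucial step is showing that both summands reduce every $T_i$. For $h \in \mathcal H_1$ and any $k$, writing $h = T_n^k h_k$ with $h_k \in \mathcal H$ and using commutativity gives $T_i h = T_n^k T_i h_k \in T_n^k \mathcal H$. Invoking the adjoint of the hypothesised relation, $T_i^* T_n^p = T_{n-i}$, and writing $h = T_n^{p+k} g_k$ gives $T_i^* h = T_{n-i} T_n^k g_k = T_n^k T_{n-i} g_k \in T_n^k \mathcal H$. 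Since this holds for every $k$, both $T_i$ and $T_i^*$ preserve $\mathcal H_1$, and hence preserve $\mathcal H_2$. On $\mathcal H_1$ the relations of (ii) persist and $T_n|_{\mathcal H_1}$ is unitary, so Theorem \ref{u} identifies $(T_1|_{\mathcal H_1},\ldots,T_n|_{\mathcal H_1})$ as a $\bl\Theta_n$-unitary. On $\mathcal H_2$, I would use the standard identification $\mathcal H_2 \simeq H^2(\mathcal E)$ with $\mathcal E = \ker(T_n|_{\mathcal H_2})^*$ that carries $T_n|_{\mathcal H_2}$ to $T_z$; commutativity with $T_z$ then forces each $T_i|_{\mathcal H_2}$ to be an analytic Toeplitz operator $T_{\Phi_i}$ with $\Phi_i \in H^\infty \mathcal L(\mathcal E)$. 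The relation $T_{\Phi_i}^* T_z^p = T_{\Phi_{n-i}}$ translates to $\Phi_i(z)^* z^p = \Phi_{n-i}(z)$ on $\mathbb T$, which forces $\Phi_i$ to be a polynomial in $z$ of degree at most $p$ whose coefficients satisfy the symmetry (iii) of Theorem \ref{pure iso}; operator commutativity yields (iv). For condition (ii) of Theorem \ref{pure iso}, I would restrict the global $\Gamma_{n-1}$-contraction hypothesis to $\mathcal H_2$ and use the equality $\|f(T_{\Phi_1},\ldots,T_{\Phi_{n-1}})\| = \sup_{z \in \mathbb T}\|f(\Phi_1(z),\ldots,\Phi_{n-1}(z))\|$, valid for polynomials $f$, to obtain the required pointwise $\Gamma_{n-1}$-contraction at each $z \in \mathbb T$. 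The converse half of Theorem \ref{pure iso} then concludes that $(T_1|_{\mathcal H_2},\ldots,T_n|_{\mathcal H_2})$ is a pure $\bl\Theta_n$-isometry.

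The implication $(iii) \Rightarrow (i)$ is a bookkeeping step: the $\bl\Theta_n$-unitary summand is its own dilation, the pure $\bl\Theta_n$-isometry on $\mathcal H_2$ admits a $\bl\Theta_n$-unitary dilation by definition (concretely the multiplication tuple on $L^2(\mathcal E)$ arising from the converse in Theorem \ref{pure iso}), and the orthogonal direct sum of these dilations is a $\bl\Theta_n$-unitary on a Hilbert space containing $\mathcal H$ that extends $(T_1,\ldots,T_n)$. The main obstacle is the step on $\mathcal H_2$: converting the global $\Gamma_{n-1}$-contraction hypothesis of (ii) into the pointwise symbol condition demanded by Theorem \ref{pure iso}(ii), together with verifying that the coefficient structure forced by the relations matches conditions (iii)--(iv) of that theorem.
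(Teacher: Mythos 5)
Your proposal is correct and follows the same overall route as the paper: the cycle (i) $\Rightarrow$ (ii) $\Rightarrow$ (iii) $\Rightarrow$ (i), with Theorem \ref{u} driving (i) $\Rightarrow$ (ii) and the unitary summand, the Wold decomposition of the isometry $T_n$, and the Toeplitz model of Theorem \ref{pure iso} (together with the analytic-symbol norm identity, which is the content of Lemma \ref{taut}) handling the pure summand; your reading of the $(1,1)$-corner of $(\tilde T_n^p)^*\tilde T_i=\tilde T_{n-i}^*$ is just the paper's compression argument in matrix form. The one genuine difference is how you prove that the Wold summands reduce each $T_i$: the paper writes $T_i$ as a $2\times 2$ block matrix with respect to $\mathcal H_1\oplus\mathcal H_2$, uses $T_nT_i=T_iT_n$ together with Lemma \ref{ay} (an operator intertwining a unitary with a pure isometry is zero) to kill the $(2,1)$ corner, and then the relation $(T_n^p)^*T_i=T_{n-i}^*$ to kill the $(1,2)$ corner; you instead argue directly on $\mathcal H_1=\bigcap_{k\ge 0}T_n^k\mathcal H$, showing it is invariant under $T_i$ by commutativity and invariant under $T_i^*$ via the adjoint relation $T_i^*T_n^p=T_{n-i}$. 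Your version avoids Lemma \ref{ay} altogether and is more elementary and self-contained, while the paper's block computation buys the explicit block-diagonal form of each $T_i$ that it then feeds into Theorem \ref{u} and Theorem \ref{pure iso}; since your reducing subspaces yield the same restrictions, nothing is lost. The only point to watch in your $\mathcal H_2$ step is that the identity $\Vert f(T_{\Phi_1},\ldots,T_{\Phi_{n-1}})\Vert=\sup_{z\in\mathbb T}\Vert f(\Phi_1(z),\ldots,\Phi_{n-1}(z))\Vert$ holds with an essential supremum for $H^\infty$ symbols, so the pointwise $\Gamma_{n-1}$-condition is obtained almost everywhere first and then everywhere once the $\Phi_i$ are shown to be polynomials; this is exactly the level of care the paper itself takes via Lemma \ref{taut}, so it is not a gap.
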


\begin{proof} Suppose (i) holds. By definition, there exists $(\tilde T_1,\ldots,\tilde T_n)$ a $\bl\Theta_n$-unitary on $\mathcal K$ containing $\mathcal H$ such that $\mathcal H$ is a invariant subspace of $\tilde T_i$'s and $T_i$'s are restrictions of $\tilde T_i$'s to $\mathcal H.$ From Theorem \ref{u}, it follows that $\tilde T_i$'s are satisfying the relations:
$
{\tilde T}_n^*{\tilde T}_n=I,\, {\tilde T}_n^*{\tilde T}_i={\tilde T}_{n-i}^*
$
 and $(\g_1{\tilde T}_1,\ldots,\g_{n-1}{\tilde T}_{n-1})$ is a $\Gamma_{n-1}$-contraction,
   where $\g_i=\frac{n-i}{n}$ for $i=1,\ldots,n-1.$ Compressing to the common invariant subspace $\mathcal H$ and by part (1) of remark \ref{ex}, we obtain
 $$
 T_n^*T_n=I,\, T_n^*T_i=T_{n-i}^*
 $$
 and $(\g_1T_1,\ldots,\g_{n-1}T_{n-1})$ is a $\Gamma_{n-1}$-contraction, where $\g_i=\frac{n-i}{n}$ for $i=1,\ldots,n-1$. Thus (i) implies (ii).

 Suppose (ii) holds. By  Wold decomposition of $T_n,$ we  write $T_n = U\oplus V$ on $\mathcal H = \mathcal H_1\oplus\mathcal H_2,$ where $\mathcal H_1,\mathcal H_2$ are reducing subspaces for $T_n$, $U$ is unitary and $V$ is pure isometry. Let 
 $$
 T_i = \begin{bmatrix} T_{11}^{(i)} & T_{12}^{(i)}\\ T_{21}^{(i)} & T_{22}^{(i)} \end{bmatrix}
 $$
 with respect to this decomposition, where $T_{jk}^{(i)}$ is a bounded operator from $\mathcal H_k$ to $\mathcal H_j$. Since $T_nT_i = T_iT_n,$ it follows that
 $$
 \begin{bmatrix} UT_{11}^{(i)} & UT_{12}^{(i)}\\ VT_{21}^{(i)} & VT_{22}^{(i)} \end{bmatrix} = \begin{bmatrix} T_{11}^{(i)}U & T_{12}^{(i)}V\\ T_{21}^{(i)}U & T_{22}^{(i)}V \end{bmatrix},\, i = 1,\ldots,n-1.
 $$
 Thus, $T_{21}^{(i)}U = VT_{21}^{(i)}$ and hence by Lemma \ref{ay}, $T_{21}^{(i)} = 0,\, i = 1,\ldots,n-1$. Now $(T_n^p)^*T_i=T_{n-i}^*$ implies that
 $$
  \begin{bmatrix} (U^p)^*T_{11}^{(i)} & (U^p)^*T_{12}^{(i)}\\ 0 & (V^p)^*T_{22}^{(i)} \end{bmatrix} = \begin{bmatrix} T_{11}^{(n-i)*} & 0\\ T_{12}^{(n-i)*} & T_{22}^{(n-i)*} \end{bmatrix},\, i = 1,\ldots,n-1.
 $$
 It follows that $T_{12}^{(i)} = 0,\, i = 1,\ldots,n-1$. So $\mathcal H_1, \mathcal H_2$ are common reducing subspace for $T_1,\ldots,T_n.$ From the matrix equation above, we have $(U^p)^*T_{11}^{(i)} = T_{11}^{(n-i)*},\, i = 1,\dots,n-1.$ Thus by part (i) of the remark \ref{ex} and part (ii) of Theorem \ref{u}, it follows that $(T_{11}^{(1)},\ldots,T_{11}^{(n-1)},U)$ is a $\bl\Theta_n$-unitary.

 We now require to show that $(T_{22}^{(1)},\ldots,T_{22}^{(n-1)},V)$ is a pure $\bl\Theta_n$-isometry. Since V is a pure isometry $\mathcal H$ is separable, we can identify it with the shift operator $T_z$ on the Hardy space of vector valued functions $H^2(\mathcal E)$ for some separable Hilbert space $\mathcal E$. Since $T_{22}^{(i)}$'s commute with $V,$ there exists $\Phi_i\in H^\infty \mathcal L(\mathcal E)$ such that $T_{22}^{(i)}$ can be identified with $T_{\Phi_i}$ for $1\leq i\leq n-1$. Since $(T_1,\ldots, T_n)$ is a $\bl\Theta_n$-contraction, from Lemma \ref{projection} and Lemma \ref{taut}, it follows that $(\g_1T_{22}^{(1)},\ldots,\g_{n-1}T_{22}^{(n-1)})$ is a $\Gamma_{n-1}$-contraction,
   where $\g_i=\frac{n-i}{n}$ for $i=1,\ldots,n-1.$ The relations $(V^p)^*T_{22}^{(i)} = T_{22}^{(n-i)*}$ yield
 $$
 (T_{ z}^*)^p T_{\Phi_i} = T_{\Phi_{n-i}}^*,\, 1\leq i\leq n-1.
 $$
 Same Calculations as in the first part of the proof of Theorem \ref{pure iso} yields
 $\Phi_i(z) = \sum_{\ell=0}^pA_\ell^{(i)}z^\ell$ 
   for some $A_\ell^{(i)}\in\mathcal L(\mathcal E)$ satisfying ${A_\ell^{(i)}}^* = A^{(n-i)}_{p-\ell}$  for $\ell=0,\ldots,p$ and $i=1\ldots  n-1.$
 Now since $T_{22}^{(i)}$'s commutes, it follows that
$$\sum_{\ell=0}^k[A_\ell^{(i)}, A_{k-\ell}^{(j)}] = 0 \text{~for~} k=0,\ldots, 2p \text{~and~} i,j=1\ldots   n-1.$$ 
An appeal to Theorem \ref{pure iso}, $(T_1|_{{\mathcal H}_2},\ldots, T_n|_{{\mathcal H}_2})$ is a pure $\bl\Theta_n$-isometry. Thus (ii) implies (iii).

It is easy to see that (iii) implies (i).  This completes the proof.

\end{proof}

  \begin{cor}
   Let $T_i, \, i=1,\ldots,n,$ be commuting operators on a Hilbert space $\mathcal H$. Then $(T_1,\ldots, T_n)$ is a $\bl\Theta_n$-co-isometry if and only if
   $$
   T_nT_n^*=I,\, T_n^pT_j^*=T_{n-j} \text{~for~} j=1,\ldots, n-1,
   $$ 
   and $(\g_1T_1^*,\ldots,\g_{n-1}T_{n-1}^*)$ is a $\Gamma_{n-1}$-contraction,
   where $\g_j=\frac{n-j}{n}$ for $j=1,\ldots,n-1$.
  \end{cor}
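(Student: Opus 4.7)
The plan is to reduce the corollary immediately to Theorem \ref{wold} by applying it to the adjoint tuple. By Definition (iii) of $\bl\Theta_n$-co-isometries, the tuple $(T_1,\ldots,T_n)$ is a $\bl\Theta_n$-co-isometry if and only if $(T_1^*,\ldots,T_n^*)$ is a $\bl\Theta_n$-isometry. So I would set $S_i := T_i^*$ for $i=1,\ldots,n$ and apply the equivalence (i)$\Leftrightarrow$(ii) of Theorem \ref{wold} to the commuting tuple $(S_1,\ldots,S_n)$.

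The conditions from Theorem \ref{wold}(ii) applied to $(S_1,\ldots,S_n)$ read: $S_n^* S_n = I$, $(S_n^p)^* S_j = S_{n-j}^*$ for $j=1,\ldots,n-1$, and $(\gamma_1 S_1,\ldots,\gamma_{n-1} S_{n-1})$ is a $\Gamma_{n-1}$-contraction. Rewriting each condition in terms of the original $T_i$'s, the first becomes $T_n T_n^* = I$, the second becomes $T_n^p T_j^* = T_{n-j}$, and the third is precisely $(\gamma_1 T_1^*,\ldots,\gamma_{n-1} T_{n-1}^*)$ being a $\Gamma_{n-1}$-contraction, which matches the corollary's statement.

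There is essentially no obstacle here, so the ``proof'' is a two-line translation. The only thing that deserves a brief verification is that the tuple $(S_1,\ldots,S_n)$ remains a commuting tuple, which is immediate from commutativity of the $T_i$'s. I would write the proof as a single sentence invoking the co-isometry definition and Theorem \ref{wold}, followed by a display rewriting the three adjoint conditions in original form; no further computation is needed.
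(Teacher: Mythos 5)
Your proposal is correct and is exactly the intended argument: the paper states this as an immediate corollary of Theorem \ref{wold} (the equivalence (i)$\Leftrightarrow$(ii)) applied to the adjoint tuple, using the definition of a $\bl\Theta_n$-co-isometry, with the same straightforward translation $S_n^*S_n=I \leftrightarrow T_nT_n^*=I$, $(S_n^p)^*S_j=S_{n-j}^* \leftrightarrow T_n^pT_j^*=T_{n-j}$, and the $\Gamma_{n-1}$-contraction condition on the adjoints.
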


\subsection{Characterization of invariant subspaces for $\bl\Theta_n$-isometries}

In view of Wold type decomposition in Theorem \ref{wold}, in order to characterize the invariant subspaces of $\bl\Theta_n$-isometries it is enough to characterize invariant subspaces for the associated model space obtained in  Theorem \ref{pure iso}. The following theorem discusses this issue.

A closed subspace $\mathcal M\neq \{0\}$ of $H^2(\m E)$ is said to be $(T_\mathbf \Phi,T_z)$-\emph{invariant} if $\m M$ is invariant
under $T_{\Phi_i}$ and $T_z$ for all $i=1,\ldots,n-1.$

Let $\m M\neq \{0\}$ be a closed subspace of $H^2(\m E_*).$ It follows from Beurling-Lax-Halmos theorem that $\m M$ is invariant under $T_z$ if and only
if there exists a Hilbert space  $\m E$ and an inner function $\Theta\in H^\infty\m L(\m E,\m E_*)$ ($\Theta $ is an isometry
almost everywhere on $\mb T$) such that
$$\m M=M_\Theta H^2(\m E).$$

 \begin{thm}
  Let $\mathcal M\neq \{0\}$ be a closed subspace of $H^2({\mathcal E_*})$ and $\Phi_i, i=1,\ldots,n-1$ be in
$H^\infty{\mathcal L(\mathcal E_*)}$ such that $(T_{{\mathbf \Phi}},T_z)$ is a pure $\bl\Theta_n$-isometry
on $H^2(\mathcal E_*)$. Then $\mathcal M$ is a $(T_{{\mathbf \Phi}},T_z)$-invariant subspace if and only
if there exist $\Psi_i,\, i=1,\ldots,n-1$ in $H^\infty{\mathcal L(\mathcal E)}$ such that
$(T_{{\mathbf \Psi}},T_z)$ is a pure $\bl\Theta_n$-isometry on $H^2(\mathcal E)$ and
  $$
  {\Phi_i}{\Theta} = {\Theta}{\Psi_i}, \, i=1,\ldots,n-1,
  $$ where $\Theta \in H^\infty{\mathcal L(\mathcal E, \mathcal E_*)}$ is the Beurling-Lax-Halmos representation of $\mathcal M$.
 \end{thm}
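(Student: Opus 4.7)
The statement is a joint generalization of the classical Beurling--Lax--Halmos theorem, so the natural strategy is to apply the scalar $T_z$-invariance part first and then lift the additional $T_{\Phi_i}$-invariance to an intertwining identity using the commutant theorem for vector-valued analytic Toeplitz operators.

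For the easy direction, assume we are given an inner $\Theta\in H^\infty\mathcal L(\mathcal E,\mathcal E_*)$, functions $\Psi_i\in H^\infty\mathcal L(\mathcal E)$ with $(T_{\mathbf\Psi},T_z)$ a pure $\bl\Theta_n$-isometry on $H^2(\mathcal E)$, and the identities $\Phi_i\Theta=\Theta\Psi_i$ a.e.\ on $\mathbb T$. Setting $\mathcal M = M_\Theta H^2(\mathcal E)$, classical Beurling--Lax--Halmos ensures $T_z\mathcal M\subseteq\mathcal M$, and for every $f\in H^2(\mathcal E)$ we have $M_{\Phi_i}M_\Theta f = M_{\Phi_i\Theta}f = M_{\Theta\Psi_i}f = M_\Theta M_{\Psi_i}f\in\mathcal M$, so $\mathcal M$ is $(T_{\mathbf\Phi},T_z)$-invariant.

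For the forward direction, assume $\mathcal M$ is $(T_{\mathbf\Phi},T_z)$-invariant. In particular it is $T_z$-invariant, so Beurling--Lax--Halmos furnishes an inner $\Theta\in H^\infty\mathcal L(\mathcal E,\mathcal E_*)$ with $\mathcal M=M_\Theta H^2(\mathcal E)$; the multiplication $M_\Theta:H^2(\mathcal E)\to H^2(\mathcal E_*)$ is then an isometry onto $\mathcal M$. The invariance under $T_{\Phi_i}$ translates to $M_{\Phi_i}M_\Theta H^2(\mathcal E)\subseteq M_\Theta H^2(\mathcal E)$, so there is a unique bounded operator $S_i$ on $H^2(\mathcal E)$ determined by $M_\Theta S_i = M_{\Phi_i}M_\Theta$ (using injectivity of $M_\Theta$). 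Since $M_\Theta$ intertwines the shift $T_z$ on $H^2(\mathcal E)$ with the shift $T_z$ on $H^2(\mathcal E_*)$ and $M_{\Phi_i}$ commutes with $T_z$ on $H^2(\mathcal E_*)$, a direct computation yields $M_\Theta S_i T_z = M_{\Phi_i}M_\Theta T_z = M_{\Phi_i}T_z M_\Theta = T_z M_{\Phi_i}M_\Theta = T_z M_\Theta S_i = M_\Theta T_z S_i$, and injectivity of $M_\Theta$ forces $S_i T_z = T_z S_i$.

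This is the heart of the argument: $S_i$ commutes with the unilateral shift on $H^2(\mathcal E)$, and the classical commutant lemma (every bounded operator commuting with the shift on a vector-valued Hardy space is an analytic Toeplitz operator) produces $\Psi_i\in H^\infty\mathcal L(\mathcal E)$ with $S_i=T_{\Psi_i}$. Rewriting the defining relation gives $M_\Theta M_{\Psi_i}=M_{\Phi_i}M_\Theta$, i.e., $\Phi_i\Theta=\Theta\Psi_i$ a.e.\ on $\mathbb T$. It remains to verify that $(T_{\mathbf\Psi},T_z)$ is a pure $\bl\Theta_n$-isometry on $H^2(\mathcal E)$. Note that $\mathcal M$ is a common invariant subspace of the $\bl\Theta_n$-isometry $(T_{\mathbf\Phi},T_z)$; extending to the minimal $\bl\Theta_n$-unitary dilation and restricting to $\mathcal M$ shows $(T_{\Phi_1}|_{\mathcal M},\ldots,T_{\Phi_{n-1}}|_{\mathcal M},T_z|_{\mathcal M})$ is a $\bl\Theta_n$-isometry. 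The unitary $M_\Theta:H^2(\mathcal E)\to\mathcal M$ intertwines this restriction with $(T_{\Psi_1},\ldots,T_{\Psi_{n-1}},T_z)$ by construction, so the latter is a $\bl\Theta_n$-isometry, and it is pure because the last component is the unilateral shift.

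The main obstacle is ensuring that the abstract intertwiner $S_i$ is actually an analytic Toeplitz operator with symbol in $H^\infty\mathcal L(\mathcal E)$; once the commutation $S_iT_z=T_zS_i$ is established, invoking the vector-valued commutant lemma is immediate, but the commutation itself relies on the simple but crucial fact that $M_\Theta$ intertwines the two copies of the shift.
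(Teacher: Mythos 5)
Your proof is correct, and the skeleton (Beurling--Lax--Halmos for the $T_z$-invariance, then lifting the $T_{\Phi_i}$-invariance through the isometry $M_\Theta$ to get $\Phi_i\Theta=\Theta\Psi_i$) matches the paper's argument; you simply make explicit, via the commutant lemma for the vector-valued shift, the step the paper states without detail when it produces the $\Psi_i$. Where you genuinely diverge is in verifying that $(T_{\mathbf\Psi},T_z)$ is a pure $\bl\Theta_n$-isometry: the paper re-checks the intrinsic model conditions directly --- commutativity of the $\Psi_i$, the estimate $\|f(\g_1T_{\Psi_1},\ldots,\g_{n-1}T_{\Psi_{n-1}})\|\leq\|M_\Theta^*f(\g_1T_{\Phi_1},\ldots,\g_{n-1}T_{\Phi_{n-1}})M_\Theta\|\leq\|f\|_{\infty,\Gamma_{n-1}}$, and the relation $(T_z^p)^*T_{\Psi_i}=T_{\Psi_{n-i}}^*$ obtained by conjugating with $M_\Theta$ --- and then invokes Theorem \ref{pure iso}, whereas you observe that the restriction of the $\bl\Theta_n$-isometry $(T_{\mathbf\Phi},T_z)$ to the joint invariant subspace $\mathcal M$ is again a $\bl\Theta_n$-isometry (immediate from the definition, since the same $\bl\Theta_n$-unitary extension serves; no minimality of the dilation is needed) and transfer it to $H^2(\mathcal E)$ by the unitary $M_\Theta:H^2(\mathcal E)\to\mathcal M$, purity being clear since the last entry is the shift. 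Your route is shorter and more conceptual, at the mild cost of the (easy, and manifestly true via the unitarily invariant characterization in Theorem \ref{wold}(ii)) fact that unitary equivalence preserves $\bl\Theta_n$-isometries; the paper's computations buy explicit information about the symbols $\Psi_i$ (their polynomial form and coefficient relations), which is in the spirit of its model-theoretic development but is not needed for the statement itself.
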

\begin{proof}
 We will prove only the forward direction as the other part is easy to see. Let $\mathcal M\neq \{0\}$ is invariant under $(T_{{\mathbf \Phi}},T_z)$. Thus, in particular, $\mathcal M$ is invariant under $T_z$ and hence the Beurling -Lax-Halmos theorem  $\mathcal M=\Theta H^2(\mathcal E),$ $\Theta\in H^\infty{\mathcal L(\mathcal E, \mathcal E_*)}$ being an inner multiplier. We also have $T_{\Phi_i}\Theta H^2(\mathcal E)\subseteq \Theta H^2(\mathcal E)$ for each $i=1,\ldots,n-1$. Thus, there exist $\Psi_i$'s, $i=1,\ldots,n-1$ in $H^\infty{\mathcal L(\mathcal E)}$ such that ${T_{\Phi_i}}{M_\Theta} = {M_\Theta}{T_{\Psi_i}},$ that is, ${\Phi_i}{\Theta} = {\Theta}{\Psi_i},\,i=1,\ldots,n-1$. Since $\Phi_i$'s commute, we have 
 $${M_\Theta}{T_{\Psi_i}}{T_{\Psi_j}} = {T_{\Phi_i}}{M_\Theta}{T_{\Psi_j}} = {T_{\Phi_j}}{T_{\Phi_i}}{M_\Theta} = {M_\Theta}{T_{\Psi_j}}{T_{\Psi_i}}$$ 
 and hence $\Psi_i\Psi_j = \Psi_j\Psi_i,\,i,j=1,\ldots,n-1$. Furthermore, we have
$$
f(T_{\Phi_1},\ldots,T_{\Phi_{n-1}})M_{\Theta} =  M_{\Theta}f(T_{\Psi_1},\ldots,T_{\Psi_{n-1}})
$$ for all polynomials $f\in \mb C[z_1,\ldots,z_{n-1}].$ Therefore,
$$
\|f(\g_1T_{\Psi_1},\ldots,\g_{n-1}T_{\Psi_{n-1}})\|\leq \|M_\Theta^*f(\g_1T_{\Phi_1},\ldots,\g_{n-1}T_{\Phi_{n-1}})M_{\Theta}\| \leq \Vert f\Vert_{\infty,\Gamma_{n-1},}
$$ for all polynomials $f\in \mb C[z_1,\ldots,z_{n-1}]$ and $\g_i=\frac{n-i}{n}$ for $i=1,\ldots,n-1$. Using Theorem \ref{pure iso}, we also note that
$$
(T_ z^p)^* T_{\Psi_i} = (T_ z^p)^*M_{\Theta}^*T_{\Phi_i}M_{\Theta} = M_{\Theta}^*T_{\bar z}T_{\Phi_i}M_{\Theta} = M_{\Theta}^*T_{\Phi_{n-i}}^*M_{\Theta} = T_{\Psi_{n-i}}^*,\, i=1,\ldots,n-1.
$$ From the observations made above and by Theorem \ref{pure iso}, it follows that $(T_{{\mathbf \Psi}},T_z)$ is a pure $\bl\Theta_n$-isometry on $H^2(\mathcal E)$ and this completes the proof.
\end{proof}

\section{Functional models on quotient domains}

We start this section by quoting a few results from \cite{BDGS}. We recall two useful definitions following \cite{DP} to pursue the exposition in \cite{BDGS}.

 \begin{defn}
A Hilbert space $\mathcal H$ is said to be a {\rm Hilbert module} over an algebra $\m A$ if the map $(f,h) \to f\cdot h,$ $f\in \m A, h\in \mathcal H,$ defines an algebra homomorphism $f \mapsto T_f$ of $\m A$ into $\m L(\m H),$ where $T_f$ is the bounded operator defined by  $T_f h = f \cdot h.$
\end{defn}
A closed subspace $\m M$ of $\m H$ is said to be a {\rm submodule} of the Hilbert module $\m H$ if $\m M$ is closed under the action of $\m A.$ If in addition, $T_f^*\m M\subseteq\m M$ for all $f\in\m A,$ then $\m M$ is called a {\rm reducing submodule} of $\m H,$ where $T_f^*$ denotes the adjoint of $T_f.$ A reducing submodule $\m M$ is called \emph{minimal} or \emph{irreducible} if only reducing submodules of $\m M$ are  $\{0\}$ and $\m M$ itself. It is easy to see that $\m H$ admits a reducing  submodule if and only if there exists an orthogonal projection $P $ on $\m H$ such that $PT_f=T_fP$ for all $f\in\m A.$

\begin{defn}
A Hilbert module $\m H$ over $\mb C[z_1,\ldots,z_n]$ of $\mb C^m$-valued holomorphic functions on $\Omega\subseteq\mb C^n,$ is said to be an {\it analytic Hilbert module}  if
\begin{enumerate}
\item [(1)] $\mb C[z_1,\ldots,z_n]\otimes\mb C^m$  is dense in $\m H$ and
\item[(2)] $\m H$ possesses a $\m L(\mb C^m)$-valued reproducing kernel on $\Omega.$
\end{enumerate}
The module action in an analytic Hilbert module is given by pointwise multiplication, that is, ${\mathfrak m}_p(\bl h)(\bl z) = p(\bl z) {\bl h}(\bl z),\, \bl z\in \Omega.$ Note that if the $i$-th component of $\bl h\in \m H$ is $h_i,$ then the $i$-th component of ${\mathfrak m}_p(\bl h)(\bl z)$ is $p(\bl z)h_i(\bl z)$ for $1\leq i\leq m.$  
\end{defn}
Let $G$ be a finite group which acts on the open unit polydisc $\mb D^n.$ Suppose that $\m H$ is a Hilbert space consisting of holomorphic functions on $\mb D^n$ with a $G$-invariant reproducing kernel $K,$ that is, $K$ satisfies $$ K(\sigma\cdot z,\sigma\cdot w)=K(z, w) \text{~for all~} z,w\in \mb D^n \text{~and~} \sigma\in G.$$
Let $\widehat{G}$ be the set of equivalence class of irreducible representations of $G.$ For each $\varrho\in\widehat G,$ we define the linear map
$\mb P_\varrho:\m H\to\m H$ by 
\bea\label{proje}
(\mb P_\varrho f)(z)=\frac{\deg \varrho}{\vert G\vert}\sum_{\sigma\in G}\chi_\varrho(\sigma^{-1})f(\sigma^{-1}\cdot z),
\eea
where $\chi_\varrho$ is the character of the representation $\varrho$ and $\deg \varrho$ denotes the dimension of $\varrho.$

\begin{rem}
      The \emph{sign representation} of a finite complex reflection group $G,$ $\rm sign : G \to \mb C^*,$ is defined by \cite[p. 139, Remark (1)]{Stanley} \begin{eqnarray}\label{sign} \rm sign(\sigma) = (\det(\sigma))^{-1},\,\,\,\, \sigma \in G.\end{eqnarray} Any element $f \in \mb P_{\rm sign}(\m H)$ can be written as $f = J_{\bl \theta} ~ \widetilde{f} \circ \bl \theta$ for a holomorphic function $\widetilde{f}$ on $\bl \theta(\mb D^n),$ where $J_{\bl \theta}$ is the determinant of the complex Jacobian matrix of the basic polynomial map $\bl \theta$.
  \end{rem}

If $\deg\varrho>1,$ we can split $\mb P_\varrho$ further as follows. Fix  $\deg\varrho=m>1.$ Suppose $\pi_\varrho$ is a unitary representation of $G$ which belongs to the equivalence class $\varrho$ in $\widehat G.$ Then, for some choice of an orthonormal basis of $\mb C^m,$ the matrix representation of $\pi_\varrho$ is given by 
\Bea
\pi_\varrho(\sigma)=(\!\!(\pi_\varrho^{ij}(\sigma))\!\!)_{i,j=1}^m\in\mb C^{m\times m}\,\,\text{ for } \sigma \in G.
\Eea
The linear maps $\mb P_\varrho^{ij}:\m H\to\m H$ for $i,j=1,\ldots, m,$ are given by
\Bea
(\mb P_{\varrho}^{ij}f)(z)=\frac{\deg \varrho}{\vert G\vert}\sum_{\sigma\in G}\pi_{\varrho}^{ji}(\sigma^{-1})f(\sigma^{-1}\cdot z).
\Eea We note that $\chi_\varrho(\sigma)={\rm trace}(\pi_\varrho(\sigma))$ for every $\sigma \in G$ and hence evidently $\mb P_\varrho = \sum_{i=1}^m \mb P_\varrho^{ii}.$

If the group $G$ is a finite complex reflection group then a basic polynomial mapping $\bl\theta=(\theta_1,\ldots,\theta_n):\mb D^n \to\bl\Theta_n$ is a $G$-invariant proper holomorphic mapping \cite[Proposition 5.4]{BDGS}.


\begin{prop}\cite[Theorem 5.7]{BDGS}\label{reduce}
Let $G$ be a finite  complex reflection group which acts on $\mb D^n$ and  $\m H$ be an analytic Hilbert module on $\mb D^n$ (with a $G$-invariant reproducing kernel) over $\mb C[z_1,\ldots,z_n]$. Then each $\mb P_\varrho^{ii}\m H$ is a joint reducing submodule over the polynomial ring $\mb C[z_1,\ldots,z_n]^G$ for every $\varrho\in\widehat G$ and $1\leq i\leq \deg \varrho.$ 

Consequently, the number of joint reducing submodules of $\m H$ over the ring $\mb C[z_1,\ldots,z_n]^G$ is at least $\sum_{\varrho\in\widehat G}\deg \varrho.$
\end{prop}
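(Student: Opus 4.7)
My plan is to exhibit each $\mb P_\varrho^{ii}$ as an orthogonal projection on $\m H$ that commutes with multiplication by every $G$-invariant polynomial; its range will then automatically be a joint reducing submodule over $\mb C[z_1,\ldots,z_n]^G$. The starting point is to put a unitary action of $G$ on $\m H$. Define $U_\sigma : \m H \to \m H$ by $(U_\sigma f)(z) = f(\sigma^{-1}\cdot z)$. The $G$-invariance of the reproducing kernel $K$ implies $U_\sigma K(\cdot,w) = K(\cdot,\sigma\cdot w)$, from which one checks that $U_\sigma$ preserves the inner product on the dense span of kernel sections and extends to a unitary; the map $\sigma \mapsto U_\sigma$ is plainly a group homomorphism, so this is a unitary representation of $G$ on $\m H$.

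Rewriting the definition of $\mb P_\varrho^{ij}$, one has the operator identity
\[
\mb P_\varrho^{ij} \;=\; \frac{\deg \varrho}{|G|}\sum_{\sigma\in G}\pi_\varrho^{ji}(\sigma^{-1})\,U_\sigma .
\]
I would invoke the Schur orthogonality relations for matrix coefficients of irreducible unitary representations of a finite group to obtain
\[
\mb P_\varrho^{ij}\,\mb P_\tau^{k\ell} \;=\; \delta_{\varrho\tau}\,\delta_{jk}\,\mb P_\varrho^{i\ell},
\]
so in particular each $\mb P_\varrho^{ii}$ is an idempotent and, since $\pi_\varrho$ is unitary (hence $\overline{\pi_\varrho^{ii}(\sigma^{-1})} = \pi_\varrho^{ii}(\sigma)$) and $U_\sigma^* = U_{\sigma^{-1}}$, a direct reindexing gives $(\mb P_\varrho^{ii})^* = \mb P_\varrho^{ii}$. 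Thus each $\mb P_\varrho^{ii}$ is an orthogonal projection on $\m H$.

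The key computation is the commutation of $\mb P_\varrho^{ii}$ with the module action by any $p \in \mb C[z_1,\ldots,z_n]^G$. For $f \in \m H$ and $z \in \mb D^n$,
\[
\bigl(\mb P_\varrho^{ii}\,\mathfrak m_p f\bigr)(z) = \frac{\deg \varrho}{|G|}\sum_{\sigma\in G}\pi_\varrho^{ii}(\sigma^{-1})\,p(\sigma^{-1}\!\cdot z)\,f(\sigma^{-1}\!\cdot z) = p(z)\,(\mb P_\varrho^{ii} f)(z) = \bigl(\mathfrak m_p\,\mb P_\varrho^{ii} f\bigr)(z),
\]
where the middle equality uses $G$-invariance of $p$ to pull $p(z)$ out of the sum. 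Hence $\mb P_\varrho^{ii}$ commutes with $\mathfrak m_p$ for every $p \in \mb C[z_1,\ldots,z_n]^G$, and therefore its range $\mb P_\varrho^{ii}\m H$ is invariant under $\mathfrak m_p$ and $\mathfrak m_p^*$, i.e.\ a joint reducing submodule over $\mb C[z_1,\ldots,z_n]^G$.

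For the count, the orthogonality relation $\mb P_\varrho^{ii}\,\mb P_\tau^{jj} = \delta_{\varrho\tau}\delta_{ij}\mb P_\varrho^{ii}$ shows that, as $(\varrho, i)$ ranges over $\{(\varrho, i) : \varrho \in \widehat G,\, 1 \le i \le \deg \varrho\}$, the ranges $\mb P_\varrho^{ii}\m H$ are pairwise orthogonal subspaces of $\m H$. Since $\sum_{\varrho,i}\mb P_\varrho^{ii} = I$ on $\m H$ by Peter--Weyl for the finite group $G$, these subspaces give a genuine orthogonal decomposition, producing $\sum_{\varrho \in \widehat G} \deg \varrho$ distinct joint reducing submodules (those that happen to be nonzero; the nonzero ones must be distinct from each other). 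I expect the only delicate point to be packaging the Schur orthogonality identities cleanly so that self-adjointness of each $\mb P_\varrho^{ii}$ falls out transparently from the unitarity of the matrix coefficients; everything else is bookkeeping once the representation $U$ of $G$ on $\m H$ is in place.
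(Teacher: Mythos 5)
Your construction of the unitary action $U_\sigma$ from the $G$-invariant kernel, the Schur-orthogonality identities making each $\mb P_\varrho^{ii}$ a self-adjoint idempotent, and the computation showing $\mb P_\varrho^{ii}\mathfrak m_p=\mathfrak m_p\mb P_\varrho^{ii}$ for $G$-invariant $p$ are exactly the standard route to the first assertion (the paper itself only cites \cite[Theorem 5.7]{BDGS} for this, and your argument is the expected one), and that part is correct. One small caution: with the paper's right-action convention $\sigma\cdot\bl z=\sigma^{-1}\bl z$, the map $\sigma\mapsto U_\sigma$ is an anti-homomorphism rather than "plainly a homomorphism"; this only reindexes the Schur relations and does not affect the conclusion, but it should be stated carefully.

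The genuine gap is in the counting claim. You concede it yourself in the parenthesis "those that happen to be nonzero": if some $\mb P_\varrho^{ii}\m H$ were zero, your construction would produce strictly fewer than $\sum_{\varrho\in\widehat G}\deg\varrho$ distinct reducing submodules, so the asserted lower bound does not follow from pairwise orthogonality plus $\sum_{\varrho,i}\mb P_\varrho^{ii}=I$ alone. Non-vanishing is where the hypotheses you have not yet used must enter: since $\m H$ is an analytic Hilbert module, $\mb C[z_1,\ldots,z_n]$ sits inside $\m H$, and every irreducible representation of $G$ occurs in $\mb C[z_1,\ldots,z_n]$ (for instance, the restriction map onto functions on a regular $G$-orbit in $\mb D^n$ is an equivariant surjection onto a copy of the regular representation; equivalently, by Chevalley the coinvariant algebra is the regular representation). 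Hence $\mb P_\varrho\m H\neq\{0\}$ for every $\varrho$, and since the operators $\mb P_\varrho^{ij}$ restrict to isomorphisms from $\mb P_\varrho^{jj}\m H$ onto $\mb P_\varrho^{ii}\m H$, all the diagonal pieces for a fixed $\varrho$ are nonzero as soon as one is. With non-vanishing in hand, pairwise orthogonal nonzero subspaces are distinct and the count "at least $\sum_{\varrho\in\widehat G}\deg\varrho$" follows; without it, the "consequently" sentence is unproved.
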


Henceforth, we specialize to $G=G(m,p,n),$ here $m,p,n$ are positive integers, $p$ divides $m$ and $n>1.$ Then the basic polynomial mapping $\bl\theta:\mb D^n\to\bl\theta(\mb D^n)$ associated to $G$ is such that
\bea\label{basicpoly}
\ov{\theta_i(z)}\theta_n^p(z)=\theta_{n-i}(z) \text{~for~} z\in\mb T^n.
\eea
A combination of Proposition \ref{reduce} and Corollary \ref{res} allows us to conclude the next result which in turn enables us to exhibit a plethora of examples of $\bl\Theta_n$-contractions.
\begin{thm}\label{reduced}
 Suppose that $G(m,p,n)$ acts on $\mb D^n$ and $\m H$ is an analytic Hilbert module on $\mb D^n$ over $\mb C[z_1,\ldots,z_n]$ with a $G(m,p,n)$-invariant reproducing kernel. 
 \begin{enumerate}
     \item If the  multiplication operators $M_{z_1},\ldots, M_{z_n}$  by the coordinate functions on $\m H$ are contractions and satisfy the analogue of von Neumann's inequality for all $G(m,p,n)$-invariant polynomials in $\mb C[z_1,\ldots, z_n],$ then  the commuting $n$-tuple of the restriction operators $(M_{\theta_1},\ldots, M_{\theta_n})|_{\mb P_\varrho^{ii}\m H}$ is a $\bl\Theta_n$-contraction for every   $\varrho\in\widehat G$ and $1\leq i\leq \deg \varrho.$  
     \item Consequently, at least $\sum_{\varrho\in\widehat G}\deg \varrho$ number of $\bl\Theta_n$-contractions arises from $(M_{z_1},\ldots, M_{z_n})$ in this way.
 \end{enumerate}
\end{thm}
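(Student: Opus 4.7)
The proof plan is to combine Proposition~\ref{reduce} with Corollary~\ref{res} in a straightforward manner, once the right hypotheses are verified.

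First, for part (1), I would set $\mathbf T = (M_{z_1},\ldots,M_{z_n})$, the commuting tuple of coordinate multiplication operators on $\m H$. By hypothesis, each $M_{z_i}$ is a contraction and $\mathbf T$ satisfies von Neumann's inequality for all $G(m,p,n)$-invariant polynomials in $\mb C[z_1,\ldots,z_n]$. Thus $\mathbf T$ meets exactly the hypotheses of Corollary~\ref{res}. Moreover, each basic polynomial $\theta_i$ is $G$-invariant, so $M_{\theta_i} = \theta_i(\mathbf T)$ makes sense and lies in the commutant of the module action by $\mb C[z_1,\ldots,z_n]^G$.

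Next, I would invoke Proposition~\ref{reduce}: since $\m H$ is an analytic Hilbert module on $\mb D^n$ with a $G$-invariant reproducing kernel, each subspace $\mb P_\varrho^{ii}\m H$ is a joint reducing (in particular, joint invariant) submodule of $\m H$ over the ring $\mb C[z_1,\ldots,z_n]^G$ of $G$-invariant polynomials, for every $\varrho \in \widehat G$ and every $1 \le i \le \deg\varrho$. In particular, $\mb P_\varrho^{ii}\m H$ is a common invariant subspace for the operators $M_{\theta_1},\ldots,M_{\theta_n}$, since each $\theta_j$ belongs to $\mb C[z_1,\ldots,z_n]^G$. Applying Corollary~\ref{res} with $\m M = \mb P_\varrho^{ii}\m H$ then yields immediately that
\[
\bigl(M_{\theta_1}|_{\mb P_\varrho^{ii}\m H},\ldots,M_{\theta_n}|_{\mb P_\varrho^{ii}\m H}\bigr)
\]
is a $\bl\Theta_n$-contraction, which is exactly the assertion of part (1).

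For part (2), I would simply count: Proposition~\ref{reduce} produces at least $\sum_{\varrho\in\widehat G}\deg\varrho$ distinct reducing submodules of the form $\mb P_\varrho^{ii}\m H$ (indexed by pairs $(\varrho,i)$ with $\varrho\in\widehat G$ and $1\le i\le\deg\varrho$), and part (1) attaches a $\bl\Theta_n$-contraction to each. I would remark that no claim of mutual (unitary) inequivalence is being made here; that refinement is deferred to Corollary~\ref{number} and the subsequent concrete results.

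The proof is essentially a two-line assembly of already-proved ingredients, so there is no real obstacle; the only point worth emphasizing is the verification that the hypotheses of Corollary~\ref{res} are exactly what (1) assumes, and that the $G$-invariance of the $\theta_i$'s is what forces $\mb P_\varrho^{ii}\m H$ to be $M_{\theta_i}$-invariant via Proposition~\ref{reduce}.
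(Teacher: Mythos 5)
Your proposal is correct and follows exactly the route the paper takes: the theorem is obtained precisely by combining Proposition~\ref{reduce} (which gives that each $\mb P_\varrho^{ii}\m H$ is a joint reducing, hence invariant, submodule for the $M_{\theta_j}$ since each $\theta_j\in\mb C[z_1,\ldots,z_n]^G$) with Corollary~\ref{res} applied to $\mathbf T=(M_{z_1},\ldots,M_{z_n})$ and $\m M=\mb P_\varrho^{ii}\m H$, and then counting the pairs $(\varrho,i)$. Your remark that no inequivalence claim is made at this stage is also consistent with the paper, which defers that to Corollary~\ref{number}.
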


\subsection{Unitary equivalence} Theorem \ref{reduced} provides examples of classes of $\bl\Theta_n$-contractions. Proposition \ref{inequihilmod} and Theorem \ref{sm} concern about natural question of the unitary equivalence of those $\bl\Theta_n$-contractions. 

\begin{prop}\cite[Theorem 6.10]{BDGS}\label{inequihilmod}
Let $G$ and $\m H$ be an analytic Hilbert module on $\mb D^n$ with a  $G$-invariant kernel. If $\varrho$ and $\varrho^\i\in\widehat{G}$ are such that $ {\rm deg}\,{\varrho}\neq  {\rm deg}\,{\varrho^\i},$ then
\begin{enumerate}
\item[$(1)$] $\mb P_{\varrho}\mathcal H$ and $\mb P_{\varrho^\i}\mathcal H$ are not unitarily equivalent submodules over the ring of $G$-invariant polynomials, and
\item[$(2)$] the submodules  $\mb P_{\varrho}^{ii}\m H$ and $\mb P_{\varrho^\i}^{jj}\m H$ are not unitarily equivalent for any $i,j$ with $1\leq i \leq  {\rm deg}\,{\varrho}$ and $1\leq j \leq  {\rm deg}\,{\varrho^\i}.$
\item[$(3)$] However, the submodules $\mb P_{\varrho}^{ii}\m H$ and $\mb P_{\varrho}^{jj}\m H$ are unitarily equivalent for any $\varrho\in\widehat{G}$ and $i,j$ with $1\leq i,j \leq  {\rm deg}\,{\varrho}.$
\end{enumerate}
\end{prop}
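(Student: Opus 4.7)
The plan is to handle (3) first by writing down intertwining isometries coming from the matrix coefficients of $\varrho$, and then to separate the modules in (1) and (2) via a unitary invariant of the tuple $(M_{\theta_1}|_{\m M},\ldots,M_{\theta_n}|_{\m M})$ that turns out to equal $\deg\varrho$ on $\mb P_\varrho^{ii}\m H$.

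For (3), the Schur orthogonality relations for matrix coefficients of $\pi_\varrho$, combined with the definition of $\mb P_\varrho^{ij}$ in \eqref{proje}, yield the algebraic identities $(\mb P_\varrho^{ij})^{*}=\mb P_\varrho^{ji}$ and $\mb P_\varrho^{ij}\mb P_\varrho^{kl}=\delta_{jk}\mb P_\varrho^{il}$. In particular $\mb P_\varrho^{ij}\mb P_\varrho^{ji}=\mb P_\varrho^{ii}$, so the restriction of $\mb P_\varrho^{ji}$ to $\mb P_\varrho^{ii}\m H$ is a Hilbert-space isometry onto $\mb P_\varrho^{jj}\m H$ with two-sided inverse $\mb P_\varrho^{ij}|_{\mb P_\varrho^{jj}\m H}$. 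Because each $M_{\theta_k}$ is multiplication by a $G$-invariant polynomial, it commutes with the unitary pull-back $f\mapsto f\circ\sigma^{-1}$ on $\m H$, hence with every $\mb P_\varrho^{ij}$. Therefore this isometry intertwines the module actions, giving the required unitary equivalence.

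For (1) and (2), I would use the invariant
\begin{equation*}
r(\m M)=\dim\bigcap_{k=1}^{n}\ker\bigl(M_{\theta_k}^{*}|_{\m M}-\overline{\theta_k(w)}I\bigr),
\end{equation*}
where $w\in\mb D^n$ is chosen so that its $G$-orbit has cardinality $|G|$; since $\mb P_\varrho^{ii}\m H$ and $\mb P_\varrho\m H$ are reducing submodules by Proposition \ref{reduce}, this is indeed a unitary invariant of the restricted tuple. Inside the full module $\m H$ the joint eigenspace of $(M_{\theta_1}^{*},\ldots,M_{\theta_n}^{*})$ at $\overline{\bl\theta(w)}$ is exactly the span of $\{K(\cdot,\sigma w):\sigma\in G\}$ (using $M_{p}^{*}K(\cdot,z)=\overline{p(z)}K(\cdot,z)$ for all polynomials $p$, and the fact that the fibers of $\bl\theta$ are $G$-orbits). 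For generic $w$ these $|G|$ kernels are linearly independent and $G$ acts on them via the left regular representation. Because $\mb P_\varrho^{ii}$ commutes with every $M_{\theta_k}$, it preserves this eigenspace, and Schur's theorem identifies the image as the $i$-th ``row'' of the $\varrho$-isotypic block, of dimension $\deg\varrho$; analogously $\mb P_\varrho$ picks out the full isotypic block of dimension $(\deg\varrho)^{2}$. Thus $r(\mb P_\varrho^{ii}\m H)=\deg\varrho$ and $r(\mb P_\varrho\m H)=(\deg\varrho)^{2}$, and both (1) and (2) follow at once from $\deg\varrho\neq\deg\varrho'$.

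The main obstacle is verifying that $w$ can be chosen so that $\{K(\cdot,\sigma w):\sigma\in G\}$ is linearly independent in $\m H$. This uses the analytic Hilbert module hypothesis (so that $\mb C[z_1,\ldots,z_n]$ is dense in $\m H$ and the kernel does not degenerate on an open dense set) together with the fact that the regular orbits of $G$ on $\mb D^n$ form an open dense subset. Granting this, the remaining work is a direct bookkeeping computation combining the Schur-type decomposition of the regular representation with the commutation of $M_{\theta_k}$ with the $G$-action on $\m H$.
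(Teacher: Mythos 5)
Your overall strategy is sound, and in fact it mirrors how the cited source proceeds (this paper itself gives no proof, quoting the statement from [BDGS, Theorem 6.10]): part (3) via the matrix-unit relations $(\mb P_\varrho^{ij})^*=\mb P_\varrho^{ji}$, $\mb P_\varrho^{ij}\mb P_\varrho^{kl}=\delta_{jk}\mb P_\varrho^{il}$ and the fact that $M_q$ commutes with the (unitary) $G$-action for $G$-invariant $q$ is complete and correct. For (1)--(2), however, there is a genuine gap at the crucial step: you assert that the joint eigenspace $\bigcap_{k}\ker\bigl(M_{\theta_k}^{*}-\overline{\theta_k(w)}\,I\bigr)$ in $\m H$ is \emph{exactly} the span of $\{K(\cdot,\sigma w):\sigma\in G\}$, justified only by ``the fibers of $\bl\theta$ are $G$-orbits.'' That fact gives the inclusion $\supseteq$ and identifies the variety of the ideal $I_w=\langle \theta_1-\theta_1(w),\ldots,\theta_n-\theta_n(w)\rangle$, but it does not bound the eigenspace from above. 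Since the joint kernel equals $\overline{I_w}^{\perp}$ (closure taken in $\m H$, using density of polynomials and boundedness of the multiplications), what you actually need is the codimension identity $\dim\bigl(\m H\ominus\overline{I_w}\bigr)=\dim\mb C[z_1,\ldots,z_n]/I_w$, valid for analytic Hilbert modules when the ideal has finite codimension and its zero set lies inside $\mb D^n$; combined with Chevalley's theorem that $\mb C[z_1,\ldots,z_n]$ is free of rank $|G|$ over $\mb C[z_1,\ldots,z_n]^G$ (so $\dim\mb C[z]/I_w=|G|$, and $I_w$ is radical when the orbit of $w$ has $|G|$ points), this pins the eigenspace dimension at $|G|$ and forces equality with the span of the kernels. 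Without some such input the ``exactly'' is unsupported, and the subsequent count $r(\mb P_\varrho^{ii}\m H)=\deg\varrho$, $r(\mb P_\varrho\m H)=(\deg\varrho)^2$ has no basis.

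Relatedly, the issue you single out as ``the main obstacle'' -- linear independence of $K(\cdot,\sigma w)$ for $\sigma\in G$ at a regular point $w$ -- is in fact immediate: polynomials lie in $\m H$ and interpolate arbitrary values on the finite set $Gw$, so pairing a putative dependence relation against suitable polynomials kills all coefficients. So the effort is misallocated: the easy half is flagged as hard, while the hard half (the upper bound on the joint eigenspace, i.e.\ the codimension formula for $\overline{I_w}$) is passed over in a parenthesis. Once that lemma is supplied, the rest of your bookkeeping -- the $G$-action on the eigenspace being the left regular representation, $\mb P_\varrho$ cutting out a block of dimension $(\deg\varrho)^2$ and $\mb P_\varrho^{ii}$ a subspace of dimension $\deg\varrho$, and $r(\cdot)$ being invariant under unitary module maps over $\mb C[z_1,\ldots,z_n]^G$ -- is correct and yields (1) and (2).
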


Under the additional hypothesis that $\m H \subseteq L^2(\mb D^n,dm),$ where $L^2(\mb D^n,dm)$ denotes the space of all measurable, square integrable functions with respect to some Borel measure $dm$ on $\mb D^n$, the following result includes the possibility even when ${\rm deg}\,{\varrho} = {\rm deg}\,{\varrho^\i}$. We set the restriction operator ${M_{\theta_i}^{(\varrho,j)}}:=M_{\theta_i} \big|_{\mb P_{\varrho}^{jj} \mathcal H} .$ 
\begin{thm}\label{sm}
Let $\varrho, \varrho^\i$ be distinct elements in $\widehat{G}.$ Then the tuples of restriction operators $({M_{\theta_1}^{(\varrho,j)}},\ldots,{M_{\theta_n}^{(\varrho,j)}})$  and $({M_{\theta_1}^{(\varrho^\i,k)}},\ldots{M_{\theta_n}^{(\varrho^\i,k)}})$ are not unitarily equivalent for $j=1,\ldots,{\rm deg}\,\varrho$ and $k=1,\ldots,{\rm deg}\,\varrho^\i$.
\end{thm}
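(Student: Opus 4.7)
The case $\deg\varrho \neq \deg \varrho'$ is already subsumed by Proposition \ref{inequihilmod}(2), so the essential new content occurs when $\varrho \neq \varrho^\i$ share the same degree; this is where the additional hypothesis $\m H \subseteq L^2(\mb D^n, dm)$ becomes necessary. Moreover, Proposition \ref{inequihilmod}(3) shows that the restrictions corresponding to different diagonal indices within a fixed isotypic component are mutually unitarily equivalent, so it suffices to treat the case $j=k=1$ and to rule out the existence of a unitary $U:\mb P_{\varrho}^{11}\m H \to \mb P_{\varrho^\i}^{11}\m H$ intertwining each $M_{\theta_i}^{(\varrho,1)}$ with $M_{\theta_i}^{(\varrho^\i,1)}$.

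The plan is to run a fibrewise (direct integral) argument over the quotient domain. Since $U$ commutes with each $M_{\theta_i}$, polynomial convexity of $\ov{\bl\Theta}_n$ together with the Oka--Weil theorem promotes this to commutation with multiplication by every bounded holomorphic function on a neighbourhood of $\ov{\bl\Theta}_n$, and by standard functional-calculus closures, with multiplication by every bounded Borel function of $\bl\theta$. The $G$-invariance of $dm$ (inherited from the $G$-invariant reproducing kernel of $\m H$) allows one to disintegrate $L^2(\mb D^n, dm)$ along the proper map $\bl\theta:\mb D^n\to\bl\Theta_n$. Over the complement of the branch locus, each fibre $\bl\theta^{-1}(\bl z)$ is a free $G$-orbit, and the fibre Hilbert space of this disintegration carries the left regular representation $\mathbb C[G]$.

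Under this fibre decomposition, $(\mb P_{\varrho}^{11} L^2)_{\bl z}$ is canonically the $(1,1)$ matrix-coefficient slice of the $V_\varrho\otimes V_\varrho^{*}$ isotypic block inside $\mathbb C[G]$, hence is isomorphic as a $G$-module to the irreducible representation $V_\varrho$; similarly one obtains $V_{\varrho^\i}$ for $\varrho^\i$. The intertwining property of $U$ forces it to decompose as a measurable field of fibrewise $G$-equivariant maps $U_{\bl z}:V_\varrho \to V_{\varrho^\i}$, and since $\varrho \neq \varrho^\i$, Schur's lemma then yields $U_{\bl z}=0$ almost everywhere, contradicting the unitarity of $U$. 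The main obstacle is ensuring the direct-integral machinery descends cleanly from the ambient $L^2(\mb D^n, dm)$ to the RKHS subspace $\m H$: one must verify that the fibres $(\mb P_{\varrho}^{11}\m H)_{\bl z}$ remain nontrivial on a set of full measure and inherit the ambient $G$-representation structure. The analytic Hilbert module hypothesis (density of polynomials in $\m H$), combined with the fact that each $\mb P_\varrho^{ii}$ commutes with multiplication by $\bl\theta$, is what should make this reduction go through.
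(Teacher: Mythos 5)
Your reductions (handling unequal degrees via Proposition \ref{inequihilmod}(2) and restricting to $j=k=1$ via part (3)) are fine, but the core of your argument does not go through, and the obstacle you flag at the end is not a technicality — it is where the proof breaks. The restriction of $M_{\theta_i}$ to $\mb P_\varrho^{11}\m H$ is subnormal, not normal: on a space of holomorphic functions there is no Borel functional calculus for $\bl\theta$, since multiplication by a bounded Borel function such as $\chi_E\circ\bl\theta$ does not map $\m H$ into itself. Hence the step "by standard functional-calculus closures, $U$ commutes with multiplication by every bounded Borel function of $\bl\theta$" is unjustified, and $\mb P_\varrho^{11}\m H$ is not a decomposable subspace of the disintegration of $L^2(\mb D^n,dm)$ along $\bl\theta$ (compare: $H^2(\mb D)\subset L^2(\mb D)$ is not a measurable field over $\mb D$ with respect to $M_z$). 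So there is no measurable field $U_{\bl z}$ to work with. There is a second, independent gap: even if one had a fibrewise decomposition, $U$ is only assumed to intertwine multiplication by $G$-invariant polynomials, not the $G$-action (which in any case does not obviously preserve the diagonal slices $\mb P_\varrho^{11}\m H$ in the intertwined sense). The fibre maps would therefore only be linear maps between spaces of dimension $\deg\varrho=\deg\varrho^\i$, and Schur's lemma gives nothing — any two such spaces are linearly isomorphic. The $G$-equivariance you assert for $U_{\bl z}$ is precisely what would have to be proved.

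For contrast, the paper never leaves the reproducing kernel space. Writing $g=Uf$, the isometry and the intertwining give $\int|q|^2|f|^2\,dm=\int|q|^2|g|^2\,dm$ for every $G$-invariant polynomial $q$; averaging over $G$ and a density argument yield the pointwise identity $\sum_{\sigma\in G}|f(\sigma\cdot\bl z)|^2=\sum_{\sigma\in G}|g(\sigma\cdot\bl z)|^2$ on $\mb D^n$. A uniqueness theorem for vector-valued analytic functions (Fricain's lemma) then produces a single $\bl z$-independent isometry $T$ carrying the orbit vector of $f$ to that of $g$, which after an invertible extension forces the linear spans of the $G$-translates of $f$ and of $g$ to coincide; since these spans decompose into copies of $\varrho$ and of $\varrho^\i$ respectively, this is the contradiction. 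Analyticity is doing the work that your measurable-field picture cannot: it replaces a fibrewise family by one constant operator, and it makes the representation-theoretic contradiction available without any equivariance hypothesis on $U$. If you want to salvage your approach, you would need an argument of exactly this rigidity type, not direct-integral theory.
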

\begin{proof}
Arguing by contradiction, suppose not, then there exists a unitary $U : \mb P_\varrho \m H \to \mb P_{\varrho^\i} \m H$ which intertwines ${M_{\theta_i}^{(\varrho^\i,k)}}$ and ${M_{\theta_i}^{(\varrho,j)}}.$ Suppose $U(f) = g$ for some $f \in \mb P_\varrho \m H$ and $g \in \mb P_{\varrho^\i} \m H.$ Then $\norm{f} = \norm{g}$ implies that 
\Bea \int_{\mb D^n} |q(\bl z)|^2 |f(\bl z)|^2 dm(\bl z)= \int_{\mb D^n} |q(\bl z)|^2 |g(\bl z)|^2 dm(\bl z) 
\Eea 
for all $G$-invariant polynomial $q$ in $n$ variables. Note that $|J_\sigma(\bl z)|^2 = 1 \text{ for all } \sigma \in G \text{ and } \bl z \in \mb D^n.$ Therefore, for all $\sigma \in G,$  
\Bea \int_{\mb D^n} |q(\bl z)|^2 |f(\sigma \cdot \bl z)|^2 dm(\bl z) &=& \int_{\mb D^n} |q(\bl z)|^2 |g(\sigma \cdot \bl z)|^2 dm(\bl z) \\ \int_{\mb D^n} |q(\bl z)|^2 \sum_{\sigma \in G}|f(\sigma \cdot \bl z)|^2 dm(\bl z) &=& \int_{\mb D^n} |q(\bl z)|^2 \sum_{\sigma \in G}|g(\sigma \cdot \bl z)|^2 dm(\bl z), 
\Eea 
and thus for every $G$-invariant polynomial $q,$ we get 
\Bea \displaystyle \int_{\mb D^n} |q(\bl z)|^2 \sum_{\sigma \in G}(|f(\sigma \cdot \bl z)|^2 - |g(\sigma \cdot \bl z)|^2) dm(\bl z) = 0,
\Eea 
equivalently,  
\bea\label{nor} \sum_{\sigma \in G}|f(\sigma \cdot \bl z)|^2 = \sum_{\sigma \in G}|g(\sigma \cdot \bl z)|^2.
\eea

Let enlist the elements of the group $G$ by $\{\sigma_i:i=1,\ldots,d=|G|\}.$ We define two functions $F_i: \mb D^n \to \mb C^d$ for $i=1,2,$ by \Bea F_1(\bl z) &=& (f(\sigma_1 \cdot \bl z),\ldots, f(\sigma_d \cdot \bl z))\\ F_2(\bl z) &=& (g(\sigma_1 \cdot \bl z),\ldots, g(\sigma_d \cdot \bl z)).\Eea From Equation \eqref{nor}, $\norm{F_1(\bl z)}_{\mb C^d}^2 = \sum_{\sigma \in G}|f(\sigma \cdot \bl z)|^2 = \sum_{\sigma \in G}|g(\sigma \cdot \bl z)|^2 = \norm{F_2(\bl z)}^2_{\mb C^d}.$ Let the closed linear hull $\{F_i(\bl z) : \bl z \in \mb D^n\}$ is denoted by $V_i, i=1,2.$ Then \cite[Lemma 0.1, p. 3193]{Fricain} ensures the existence of an isometry $T$ from $V_1$ onto $V_2$ such that \bea\label{tf}TF_1(\bl z) = F_2(\bl z),~~ \text{ for all } \bl z \in \mb D^n.\eea We extend $T$ to be an invertible operator on the whole $\mb C^d$ by defining it identity to the orthogonal complement of $V_1.$ Since $T$ is invertible and $T$ follows Equation \eqref{tf}, we have $${\rm span}_{\mathbb C}\{f(\sigma \cdot \bl z) : \sigma \in G\}  =  {\rm span}_{\mathbb C}\{g(\sigma \cdot \bl z) : \sigma \in G\}.$$ This leads to a contradiction as ${\rm span}\{f(\sigma \cdot \bl z) : \sigma \in G\}$ decomposes to copies of $\varrho$ and ${\rm span}\{g(\sigma \cdot \bl z) : \sigma \in G\}$ decomposes to copies of $\varrho^\i.$
\end{proof}
\begin{rem} Let $\varrho,\varrho^\i \in \widehat{G}$ be two inequivalent representations. Theorem \ref{sm} establishes that the submodules  $\mb P_{\varrho}^{ii}\m H$ and $\mb P_{\varrho^\i}^{jj}\m H$ are not unitarily equivalent over $\mb C[\bl z]^G$ for any $i,j$ with $1\leq i \leq  {\rm deg}\,{\varrho}$ and $1\leq j \leq  {\rm deg}\,{\varrho^\i}.$ 
    \begin{enumerate}
        \item If ${\rm deg}\, \varrho = {\rm deg}\, \varrho^\i= 1,$ then it states that $\mb P_{\varrho}\mathcal H$ and $\mb P_{\varrho^\i}\mathcal H$ are not unitarily equivalent submodules over the ring of $G$-invariant polynomials. 
        \item Next it is natural to inspect whether the reducing submodules $\mathbb P_{\varrho}\mathcal H$ and $\mathbb P_{\varrho^\i}\mathcal H$ are similar. Corollary \ref{simlr} answers it in negative.
        \item In a similar way, one can prove the statement of Theorem \ref{sm} for any Hilbert space with afore-mentioned properties consisting holomorphic functions on $\Omega,$ where $\Omega$ is any domain in $\mb C^n.$
   \end{enumerate}
\end{rem}
The following corollary is an immediate consequence of Theorem \ref{reduced}, Proposition \ref{inequihilmod} and Theorem \ref{sm} which is interesting enough to be stated separately.
\begin{cor}\label{number}
Suppose that $\m H$ is an analytic Hilbert module such that $\m H\subseteq L^2(\mb D^n, dm)$ for some Borel measure on $\mb D^n.$ Then, there are precisely  $\vert\widehat G\vert$ number of mutually unitarily inequivalent families $\bl\Theta_n$-contractions each of which consists of $\deg\varrho$ number of (unitarily) equivalent $\bl\Theta_n$-contractions.  
\end{cor}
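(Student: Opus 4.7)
The plan is to assemble the corollary directly from the three results that precede it. By Theorem \ref{reduced}, to each pair $(\varrho,i)$ with $\varrho\in\widehat G$ and $1\le i\le\deg\varrho$ we have associated a $\bl\Theta_n$-contraction
\[
\bigl(M_{\theta_1}^{(\varrho,i)},\ldots,M_{\theta_n}^{(\varrho,i)}\bigr) := \bigl(M_{\theta_1},\ldots,M_{\theta_n}\bigr)\big|_{\mb P_\varrho^{ii}\m H}.
\]
This gives a candidate total count of $\sum_{\varrho\in\widehat G}\deg\varrho$ tuples, which we want to collapse to $|\widehat G|$ equivalence classes, each of size $\deg\varrho$.

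First I would handle equivalences \emph{within} a fixed $\varrho$. Part (3) of Proposition \ref{inequihilmod} says that $\mb P_\varrho^{ii}\m H$ and $\mb P_\varrho^{jj}\m H$ are unitarily equivalent submodules over $\mb C[z_1,\ldots,z_n]^G = \mb C[\theta_1,\ldots,\theta_n]$, meaning there is a unitary between them intertwining the action of every $G$-invariant polynomial. In particular such a unitary intertwines each $M_{\theta_k}$, so the $\deg\varrho$ tuples $\bigl(M_{\theta_1}^{(\varrho,i)},\ldots,M_{\theta_n}^{(\varrho,i)}\bigr)$ for $1\le i\le\deg\varrho$ are mutually unitarily equivalent as $\bl\Theta_n$-contractions. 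This furnishes each family of the claimed size $\deg\varrho$.

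Next I would rule out equivalences \emph{across} distinct $\varrho$. If $\varrho,\varrho'\in\widehat G$ are distinct, Theorem \ref{sm} (whose hypothesis $\m H\subseteq L^2(\mb D^n,dm)$ is exactly what is being assumed here) asserts that $\bigl(M_{\theta_1}^{(\varrho,j)},\ldots,M_{\theta_n}^{(\varrho,j)}\bigr)$ and $\bigl(M_{\theta_1}^{(\varrho',k)},\ldots,M_{\theta_n}^{(\varrho',k)}\bigr)$ are not unitarily equivalent for any $j,k$. Combining the two paragraphs, the $\bl\Theta_n$-contractions produced by Theorem \ref{reduced} partition into exactly $|\widehat G|$ unitary equivalence classes, the class indexed by $\varrho$ containing precisely $\deg\varrho$ mutually equivalent members. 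There is no real obstacle here beyond carefully invoking Theorem \ref{sm} in the role of Proposition \ref{inequihilmod}(1)--(2) when $\deg\varrho=\deg\varrho'$; that case is precisely the content of Theorem \ref{sm}, so the combined invocation is clean.
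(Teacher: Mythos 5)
Your proposal is correct and follows exactly the paper's route: the paper derives this corollary as an immediate consequence of Theorem \ref{reduced} (production of the tuples), Proposition \ref{inequihilmod}(3) (equivalence within each $\varrho$), and Theorem \ref{sm} (inequivalence across distinct $\varrho$, which is where the hypothesis $\m H\subseteq L^2(\mb D^n,dm)$ enters), precisely as you have assembled it.
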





\subsection{The weighted Bergman module} For $\l> 1,$ let $dV^{(\lambda)}:= \big(\frac{\lambda-1}{\pi}\big )^n \prod_{i=1}^n(1-\vert z_i\vert^2)^{\lambda-2} dV$ be a measure on the polydisc $\mb D^n,\,\, dV$ being the Lebesgue measure on $\mb D^n.$  Let
$$\mathbb A^{(\lambda)}(\mathbb D^n):=\{f\in L^2(\mathbb  D^n,\,dV^{(\lambda)}): f \text{~is holomorphic~}  \}$$
be the weighted Bergman space on the polydisc $\mathbb D^n.$  The reproducing kernel $K_\l$ of $\mathbb A^{(\lambda)}(\mathbb D^n)$ is given by
$$K_\l(\bl z,\bl w)=\prod_{i=1}^n(1-z_i\bar w_i)^{-\l},\bl z=(z_1,\ldots,z_n),\bl w=(w_1,\ldots,w_n)\in \mb D^n.$$
The limiting case of $\lambda=1$ is the Hardy space $H^2(\mb D^n)$ on the polydisc $\mb D^n$ and the reproducing kernel of $H^2(\mb D^n)$ is given by $$K_1(\bl z,\bl w)=\prod_{i=1}^n(1-z_i\bar w_i)^{-1},\bl z=(z_1,\ldots,z_n),\bl w=(w_1,\ldots,w_n)\in \mb D^n.$$ 

 It is clear that $K_\l$ remains invariant under the action of every complex reflection in every $G(m,p,n)$ and hence by $G(m,p,n)$. Moreover, each $\mathbb A^{(\lambda)}(\mathbb D^n)$ is an analytic Hilbert module over the polynomial algebra for $\l\geq1$. In the sequel, we focus on certain $n$-tuple of operators on the weighted Bergman spaces $\mathbb A^{(\lambda)}(\mathbb D^n)$ for $\l\geq 1.$

\begin{rem}
    Let $(X,\mu)$ be a measure space with measure $\mu.$ For $\varphi\in L^\infty(X,\mu),$ let $M_{\varphi}$ denote the operator on $L^2(X,\mu)$ defined by $M_\varphi f= \varphi f$ for $f\in L^2(X,\mu).$ Then $\Vert M_\varphi\Vert\leq \Vert\varphi\Vert_\infty.$ We use this observation in our next proof.
\end{rem}
\begin{prop}\label{vonN}
 For $\l\geq 1,$ 
\begin{enumerate}
\item[(i)] $\mathbf M_{\bl\theta}=(M_{\theta_1},\ldots, M_{\theta_n})$ is a $\bl\Theta_n$-contraction on $L^2(\mb D^n,dV^{(\l)}),$
\item[(ii)] the restriction of the tuple $\mathbf M_{\bl\theta}$ to $\mb A^{(\l)}(\mb D^n)$ is a $\bl\Theta_n$-contraction.
\end{enumerate}
\end{prop}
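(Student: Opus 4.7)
The plan is to prove both parts very directly by exploiting the fact that $\theta_1, \ldots, \theta_n$ are bounded holomorphic polynomials on $\mb D^n$ and that multiplication operators behave well under polynomial functional calculus.

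For part (i), I would first observe that $M_{\theta_1},\ldots, M_{\theta_n}$ are commuting multiplication operators on $L^2(\mb D^n, dV^{(\l)})$, and for any polynomial $f\in\mb C[z_1,\ldots,z_n]$ the functional calculus reduces to
\[
f(M_{\theta_1},\ldots,M_{\theta_n}) = M_{f\circ\bl\theta}.
\]
Using the standard fact (recorded in the remark preceding the proposition) that $\|M_\varphi\|\leq \|\varphi\|_\infty$, I obtain
\[
\|f(M_{\theta_1},\ldots,M_{\theta_n})\| \leq \|f\circ\bl\theta\|_{\infty,\mb D^n} = \sup_{\bl z\in\mb D^n}|f(\bl\theta(\bl z))| \leq \sup_{\bl w\in\ov{\bl\Theta}_n}|f(\bl w)| = \|f\|_{\infty,\ov{\bl\Theta}_n},
\]
since $\bl\theta(\mb D^n)\subseteq\bl\Theta_n\subseteq\ov{\bl\Theta}_n$. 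This verifies the spectral-set inequality \eqref{spec} and gives (i).

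For part (ii), I would note that each $\theta_i$ is a holomorphic polynomial on $\mb D^n$, so multiplication by $\theta_i$ sends holomorphic functions to holomorphic functions. Thus $\mb A^{(\l)}(\mb D^n)\subseteq L^2(\mb D^n,dV^{(\l)})$ is a common invariant subspace for $M_{\theta_1},\ldots,M_{\theta_n}$. Since part (i) establishes that $(M_{\theta_1},\ldots,M_{\theta_n})$ is a $\bl\Theta_n$-contraction on the ambient space $L^2(\mb D^n,dV^{(\l)})$, part (1) of Remark \ref{ex} immediately yields that the restriction to the invariant subspace $\mb A^{(\l)}(\mb D^n)$ is again a $\bl\Theta_n$-contraction.

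There is no real obstacle here; both conclusions are essentially formal consequences of the functional calculus for multiplication operators, the inclusion $\bl\theta(\mb D^n)\subseteq\bl\Theta_n$, and the invariance of the Bergman/Hardy module under multiplication by polynomials. The only minor point worth stating explicitly is that the argument is uniform in $\l\geq 1$, since neither the $L^\infty$--norm estimate for $M_\varphi$ nor the holomorphy-preserving property of $M_{\theta_i}$ depends on the weight parameter.
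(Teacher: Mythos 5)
Your proof is correct and follows essentially the same route as the paper: the paper verifies von Neumann's inequality for $(M_{z_1},\ldots,M_{z_n})$ on $L^2(\mb D^n,dV^{(\l)})$ and invokes Proposition \ref{vN} and Corollary \ref{res}, whereas you simply inline that argument via the identity $f(M_{\theta_1},\ldots,M_{\theta_n})=M_{f\circ\bl\theta}$, the bound $\|M_\varphi\|\leq\|\varphi\|_\infty$, and the inclusion $\bl\theta(\mb D^n)\subseteq\ov{\bl\Theta}_n$, then use Remark \ref{ex}(1) in place of Corollary \ref{res} for the restriction to the invariant subspace $\mb A^{(\l)}(\mb D^n)$. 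The differences are purely organizational, not mathematical.
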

\begin{proof}
Let $\mathbf M=(M_{z_1},\ldots, M_{z_n})$ be the tuple of multiplication operators by the coordinate functions on $L^2(\mb D^n,dV^{(\l)}).$
For $\l\geq 1$ and $f\in\mb C[z_1,\ldots, z_n],$ it follows that

\Bea
 \Vert f(\mathbf M)\Vert=\Vert M_{f}\Vert\leq\Vert f\Vert_{\infty,\ov{\mb D}^n}.
\Eea
Hence (i) follows immediately from Proposition \ref{vN}.

Since $\mb A^{(\l)}(\mb D^n)$ is a joint invariant subspace of $\mathbf M_{\bl\theta},$ (ii) follows from Corollary \ref{res}.
\end{proof}

The next result follows immediately from Theorem \ref{reduced}, Proposition \ref{vonN} and Corollary \ref{number}. This exhibits a large class of reproducing kernel Hilbert spaces on each of which the naturally occurring tuple of multiplication operators is a $\bl\Theta_n$-contraction.

\begin{thm}\label{Theta}
  The commuting tuple $(M_{\theta_1},\ldots,M_{\theta_n})$ of multiplication operators acting on $\mb P_{\varrho}^{ii}\big(\mb A^{(\l)}(\mb D^n)\big)$ is a $\bl\Theta_n$-contraction for every $\varrho\in\widehat G,\,\, \text{ for } G=G(m,p,n),\,\, 1\leq i\leq\deg\varrho $  and all $\l\geq 1.$ Moreover, for every $\l\geq 1,$  there are precisely  $\vert\widehat G\vert$ number of mutually unitarily inequivalent families $\bl\Theta_n$-contractions each of which consists of (unitarily) equivalent $\bl\Theta_n$-contractions.  
\end{thm}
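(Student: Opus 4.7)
The plan is to combine the three results cited (Proposition \ref{vonN}, Theorem \ref{reduced}, and Corollary \ref{number}) by verifying that the weighted Bergman module $\mb A^{(\l)}(\mb D^n)$ meets all the structural hypotheses these results demand. Since $\mb A^{(\l)}(\mb D^n)$ is an analytic Hilbert module on $\mb D^n$ over $\mb C[z_1,\ldots,z_n]$ with reproducing kernel $K_\l(\bl z,\bl w)=\prod_{i=1}^n(1-z_i\bar w_i)^{-\l}$, and this kernel is manifestly $G(m,p,n)$-invariant (because the action of $G(m,p,n)$ consists of monomial matrices with unimodular entries), the ambient framework of Theorem \ref{reduced} applies.

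First I would establish the contractivity statement. Proposition \ref{vonN}(ii) already asserts that $\mathbf{M}_{\bl\theta}$ restricted to $\mb A^{(\l)}(\mb D^n)$ is a $\bl\Theta_n$-contraction, but to invoke Theorem \ref{reduced} I need the more precise hypothesis that $M_{z_1},\ldots,M_{z_n}$ on $\mb A^{(\l)}(\mb D^n)$ are contractions obeying von Neumann's inequality for every $G(m,p,n)$-invariant polynomial. This is immediate because for any polynomial $f$ the norm bound $\|M_f\|_{L^2(\mb D^n,dV^{(\l)})}\leq\|f\|_{\infty,\ov{\mb D}^n}$ passes to the invariant subspace $\mb A^{(\l)}(\mb D^n)$, giving in particular the required inequality for every $G$-invariant polynomial. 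Then Theorem \ref{reduced}(1) directly yields that each restriction $(M_{\theta_1},\ldots,M_{\theta_n})\big|_{\mb P_\varrho^{ii}\mb A^{(\l)}(\mb D^n)}$ is a $\bl\Theta_n$-contraction for every $\varrho\in\widehat G$ and every $1\leq i\leq\deg\varrho$.

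For the second assertion, I would apply Corollary \ref{number} with $\m H=\mb A^{(\l)}(\mb D^n)\subseteq L^2(\mb D^n, dV^{(\l)})$; this is precisely the containment in an $L^2$-space required by that corollary. Corollary \ref{number} then produces exactly $|\widehat G|$ mutually unitarily inequivalent families, the family indexed by $\varrho$ consisting of $\deg\varrho$ mutually unitarily equivalent $\bl\Theta_n$-contractions (the equivalence within a family being the third clause of Proposition \ref{inequihilmod} and the inequivalence across families being Theorem \ref{sm} together with Proposition \ref{inequihilmod}(2)).

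There is no real obstacle here, as the theorem is essentially a specialization of the general machinery to the weighted Bergman setting. The only slightly delicate point is to confirm the boundary value $\l=1$ (the Hardy module $H^2(\mb D^n)$), where the measure-theoretic framework of Corollary \ref{number} uses the normalized Lebesgue measure on the distinguished boundary $\mb T^n$ rather than on $\mb D^n$; but since the reproducing kernel and the action of $G(m,p,n)$ still make $H^2(\mb D^n)$ an analytic Hilbert module with $G$-invariant kernel contained in an $L^2$-space, the same argument goes through verbatim once one records that $\|M_f\|_{H^2(\mb D^n)}\leq\|f\|_{\infty,\ov{\mb D}^n}$.
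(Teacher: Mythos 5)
Your proposal is correct and follows essentially the same route as the paper, which derives Theorem \ref{Theta} directly from Proposition \ref{vonN} (the sup-norm bound $\Vert M_f\Vert\leq\Vert f\Vert_{\infty,\ov{\mb D}^n}$ on $L^2(\mb D^n,dV^{(\l)})$ restricted to the invariant subspace), Theorem \ref{reduced}, and Corollary \ref{number}. Your extra care with the limiting case $\l=1$ is a reasonable clarification but not a departure from the paper's argument.
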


The Shilov boundary of the polydisc $\mb D^n$ is the $n$-torus $\mb T^n.$ Let $L^2(\mb T^n)$ denote the vector space of square integrable functions with respect to the Lebesgue measure on $\mb T^n.$ The linear map $\mb P_\varrho:L^2(\mb T^n)\to L^2(\mb T^n)$ defined by Equation \eqref{proje}  satisfies $\mb P_\varrho^2=\mb P_\varrho^*=\mb P_\varrho$ for every $\varrho\in\widehat G$ \cite[p.10]{GS} and
\Bea
L^2(\mb T^n)=\oplus_{\varrho\in\widehat G}\mb P_\varrho(L^2(\mb T^n)).
\Eea


\begin{lem}\label{redu}
Suppose that $G$ acts on $\mb T^n,$ then $M_{\theta_k}\mb P_\varrho=\mb P_\varrho M_{\theta_k}$ on $L^2(\mb T^n)$ for $\varrho\in\widehat G$ and $k=1,\ldots, n.$ Equivalently, $\mb P_\varrho(L^2(\mb T^n)$ is a joint reducing subspace of $( M_{\theta_1},\ldots,M_{\theta_n})$ for every $\varrho\in\widehat G.$
\end{lem}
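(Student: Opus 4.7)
The plan is a direct computation using the defining formula of $\mb P_\varrho$ together with the $G$-invariance of the basic polynomials $\theta_k$. The crucial fact is that each $\theta_k\in\mb C[z_1,\ldots,z_n]^G$, so $\theta_k(\sigma^{-1}\cdot z)=\theta_k(z)$ for every $\sigma\in G$ and $z\in\mb T^n$. I would pull this scalar factor outside the averaging sum in the definition \eqref{proje} of $\mb P_\varrho$ and read off the commutation.

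Concretely, for $f\in L^2(\mb T^n)$ and $z\in\mb T^n$, I would write
\begin{align*}
(\mb P_\varrho M_{\theta_k}f)(z)
&=\frac{\deg\varrho}{|G|}\sum_{\sigma\in G}\chi_\varrho(\sigma^{-1})\,\theta_k(\sigma^{-1}\cdot z)\,f(\sigma^{-1}\cdot z)\\
&=\theta_k(z)\,\frac{\deg\varrho}{|G|}\sum_{\sigma\in G}\chi_\varrho(\sigma^{-1})\,f(\sigma^{-1}\cdot z)\\
&=\theta_k(z)(\mb P_\varrho f)(z)=(M_{\theta_k}\mb P_\varrho f)(z),
\end{align*}
where the middle equality uses the $G$-invariance of $\theta_k$. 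This proves $M_{\theta_k}\mb P_\varrho=\mb P_\varrho M_{\theta_k}$ for every $k=1,\ldots,n$.

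For the ``equivalently'' clause, I would invoke the fact (stated just before the lemma) that $\mb P_\varrho$ is a self-adjoint projection. The commutation already gives invariance of $\m M_\varrho:=\mb P_\varrho(L^2(\mb T^n))$ under each $M_{\theta_k}$. Taking adjoints of $M_{\theta_k}\mb P_\varrho=\mb P_\varrho M_{\theta_k}$ and using $\mb P_\varrho^*=\mb P_\varrho$ yields $M_{\theta_k}^*\mb P_\varrho=\mb P_\varrho M_{\theta_k}^*$, so $\m M_\varrho$ is invariant under $M_{\theta_k}^*$ as well, hence reducing. The converse direction is the standard observation that a closed subspace reduces a bounded operator if and only if its orthogonal projection commutes with the operator.

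There is no real obstacle here: the entire statement reduces to the triviality that a multiplication operator by a $G$-invariant function commutes with every group-averaging operator, and the passage from commutation to the reducing property is standard functional-analytic bookkeeping. If anything, the only subtlety worth flagging is that the action of $G$ restricts to $\mb T^n$ (since $G(m,p,n)$ consists of monomial matrices with unimodular entries), so the formula \eqref{proje} makes sense on $L^2(\mb T^n)$ and the $G$-invariance of $\theta_k$ persists pointwise on $\mb T^n$.
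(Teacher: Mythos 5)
Your proof is correct and is essentially the same as the paper's: a direct computation pulling the $G$-invariant factor $\theta_k(\sigma^{-1}\cdot z)=\theta_k(z)$ out of the averaging sum defining $\mb P_\varrho$. The only difference is that you also spell out the passage from the commutation relation to the reducing-subspace statement via $\mb P_\varrho^*=\mb P_\varrho$, which the paper leaves implicit.
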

\begin{proof}
For $f\in L^2(\mb T^n)$ and $k=1,\ldots, n,$ it follows from Equation \eqref{proje} that 
\Bea
(\mb P_\varrho M_{\theta_k}f)(z)&=& \frac{\deg \varrho}{\vert G\vert}\sum_{\sigma\in\widehat G}\chi_\varrho(\sigma^{-1})\theta_k(\sigma^{-1}\cdot z)f(\sigma^{-1}\cdot z)\\
&=&  \frac{\deg \varrho}{\vert G\vert}\sum_{\sigma\in\widehat G}\chi_\varrho(\sigma^{-1})\theta_k(z)f(\sigma^{-1}\cdot z)\\
&=& \theta_k(z)\frac{\deg \varrho}{\vert G\vert}\sum_{\sigma\in\widehat G}\chi_\varrho(\sigma^{-1})f(\sigma^{-1}\cdot z)\\
&=& (M_{\theta_k}\mb P_\varrho)f)(z),
\Eea
where the second equality follows from $G$-invariance of $\bl\theta=(\theta_1,\ldots.\theta_n).$
\end{proof}
The following result exhibits examples of $\bl\Theta_n$-unitaries.
\begin{thm}
\begin{enumerate}
\item[(i)]The commuting tuple $(M_{\theta_1},\ldots,M_{\theta_n})$ of multiplication operators acting on  $L^2(\mb T^n)$ is a $\bl\Theta_n$-unitary,
\item[(ii)] the restriction of $(M_{\theta_1},\ldots,M_{\theta_n})$ to $\mb P_\varrho\big(L^2(\mb T^n)\big)$ is a $\bl\Theta_n$-unitary for every $\varrho\in\widehat G.$
\end{enumerate}
 
\end{thm}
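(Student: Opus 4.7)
The plan is to deduce both parts directly from Theorem \ref{u}, the defining formulas for $\theta_j$, and Lemma \ref{redu}, with essentially no further work needed.

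For (i), I would verify condition (iii) of Theorem \ref{u}. Since $|z_i|=1$ almost everywhere on $\mb T^n$, each coordinate multiplication $M_{z_i}$ on $L^2(\mb T^n)$ is unitary, and the family $\{M_{z_1},\ldots,M_{z_n}\}$ is commuting. Setting $U_i:=M_{z_i}$, the definitions $\theta_j(z)=s_j(z_1^m,\ldots,z_n^m)$ for $1\le j\le n-1$ and $\theta_n^p(z)=(z_1\cdots z_n)^m$ translate into the operator identities
\[
M_{\theta_j}=\sum_{1\le k_1<\cdots<k_j\le n}U_{k_1}^m\cdots U_{k_j}^m,\qquad M_{\theta_n}^p=\Bigl(\prod_{i=1}^n U_i\Bigr)^m,
\]
which is precisely the representation appearing in Theorem \ref{u}(iii). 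Consequently $(M_{\theta_1},\ldots,M_{\theta_n})$ is a $\bl\Theta_n$-unitary on $L^2(\mb T^n)$.

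For (ii), Lemma \ref{redu} tells us that $\mb P_\varrho(L^2(\mb T^n))$ is a common reducing subspace for $M_{\theta_1},\ldots,M_{\theta_n}$. The restriction of a normal operator to a reducing subspace is again normal, so the restricted tuple consists of commuting normals, and its joint Taylor spectrum is contained in that of the full tuple, which by (i) lies in $\partial\bl\Theta_n$. This gives the $\bl\Theta_n$-unitarity of the restriction directly from the definition. Alternatively, one can verify condition (ii) of Theorem \ref{u} on the restriction: the pointwise identities $|\theta_n(z)|=1$ and $\overline{\theta_j(z)}\theta_n^p(z)=\theta_{n-j}(z)$ on $\mb T^n$ yield the operator identities $M_{\theta_n}^*M_{\theta_n}=I=M_{\theta_n}M_{\theta_n}^*$ and $(M_{\theta_n}^p)^*M_{\theta_j}=M_{\theta_{n-j}}^*$ on $L^2(\mb T^n)$, which pass to any reducing subspace; the remaining $\Gamma_{n-1}$-contractivity of $(\gamma_1M_{\theta_1},\ldots,\gamma_{n-1}M_{\theta_{n-1}})$ follows from Lemma \ref{projection} applied to (i), and is inherited by restrictions to invariant subspaces.

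No step here presents a genuine obstacle; the proof is essentially a bookkeeping exercise, once one observes that the ambient unitaries $M_{z_i}$ on $L^2(\mb T^n)$ provide the Chevalley-type representation required by Theorem \ref{u}(iii) and that Lemma \ref{redu} makes the isotypic projections reducing. The only minor subtlety, invoking spectrum containment under restriction to reducing subspaces in (ii), is easily circumvented by the alternative verification of Theorem \ref{u}(ii) just outlined.
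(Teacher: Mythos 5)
Your argument is correct, but it routes through Theorem \ref{u} differently from the paper. For (i) you verify criterion (iii) of Theorem \ref{u} directly, exhibiting the commuting unitaries $U_i=M_{z_i}$ on $L^2(\mb T^n)$ and reading off the identities $M_{\theta_j}=\sum_{k_1<\cdots<k_j}U_{k_1}^m\cdots U_{k_j}^m$ and $M_{\theta_n}^p=\bigl(\prod_i U_i\bigr)^m$; the paper instead verifies criterion (ii), first invoking Proposition \ref{vN} (von Neumann's inequality for the unitary tuple $(M_{z_1},\ldots,M_{z_n})$) to get a $\bl\Theta_n$-contraction, then the relations \eqref{basicpoly} and Lemma \ref{projection} for the $\Gamma_{n-1}$-contractivity. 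Your route is shorter for (i) since the Chevalley-type representation is immediate and no appeal to Proposition \ref{vN} or Lemma \ref{projection} is needed. For (ii) the paper again checks criterion (ii) on $\mb P_\varrho(L^2(\mb T^n))$ using Lemma \ref{redu}, which is essentially your ``alternative'' verification; your primary argument instead goes straight to the definition, using that restrictions of commuting normals to a reducing subspace remain normal and that the joint spectrum of the restriction is contained in that of the whole tuple — this is legitimate, since in the direct-sum decomposition $L^2(\mb T^n)=\mb P_\varrho L^2(\mb T^n)\oplus(\mb P_\varrho L^2(\mb T^n))^\perp$ the joint (Taylor) spectrum of the tuple is the union of the spectra of the two restricted tuples, so the flagged subtlety is harmless. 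In short: same key theorem, but you use equivalence (iii) and a spectral-containment argument where the paper uses equivalence (ii) plus Proposition \ref{vN}, Lemma \ref{projection} and Lemma \ref{redu}; what your approach buys is brevity, while the paper's verification of the algebraic relations is the version that generalizes to the non-normal (isometry) setting of Corollary \ref{iso}.
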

\begin{proof}
Since the multiplication operator by the coordinate function $M_{z_i}$ is a unitary operator on $L^2(\mb T^n),$ for $i=1,\ldots, n,$  the commuting tuple $(M_{z_1},\ldots, M_{z_n}) $ satisfies the von Neumann inequality. Therefore, $(M_{\theta_1},\ldots,M_{\theta_n})$ on $L^2(\mb T^n)$ is a $\bl\Theta_n$-contraction by Proposition \ref{vN}. Moreover, Equation \eqref{basicpoly} implies that $(M_{\theta_1},\ldots,M_{\theta_n})$ satisfies 
\bea\label{gu}
M_{\theta_n}^*M_{\theta_n}=I=M_{\theta_n}^*M_{\theta_n} \text{~and~}  (M^p_{\theta_n})^*M_{\theta_j}=M_{\theta_{n-j}} \text{~for~} j=1,\ldots,n-1.
\eea
An appeal to Lemma \ref{projection} Theorem \ref{u} completes the proof of (i).

Lemma \ref{redu} implies that the restriction of $(M_{\theta_1},\ldots,M_{\theta_n})$ to $\mb P_\varrho\big(L^2(\mb T^n)\big)$ is a $\bl\Theta_n$-contraction satisfying Equation \eqref{gu}. Thus (ii) follows by another application  of Lemma \ref{projection} and Theorem \ref{u}.
\end{proof}

It is well-known that the Hardy space $H^2(\mb D^n)$ of the polydisc $\mb D^n$ can be regarded as a subspace of $L^2(\mb T^n).$ In the following result, we describe a family of $\bl\Theta_n$-isometries.

\begin{cor}\label{iso}
\begin{enumerate}
\item[(i)] The commuting tuple $(M_{\theta_1},\ldots,M_{\theta_n})$ of multiplication operators acting on $H^2(\mb D^n)$ is a $\bl\Theta_n$-isometry. 
\item [(ii)] the restriction of  $(M_{\theta_1},\ldots,M_{\theta_n})$ to $\mb P_{\varrho}^{ii}\big(H^2(\mb D^n)\big)$ is a $\bl\Theta_n$-isometry for every $\varrho\in\widehat G$ and $1\leq i\leq\deg\varrho.$  
\end{enumerate}
\end{cor}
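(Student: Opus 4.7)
The plan is to invoke the definition of $\bl\Theta_n$-isometry in the most direct way possible: produce, in each case, an ambient $\bl\Theta_n$-unitary of which the given tuple is a restriction to a common invariant subspace. The previous theorem has already supplied the two $\bl\Theta_n$-unitaries I need, namely $(M_{\theta_1},\ldots,M_{\theta_n})$ on $L^2(\mb T^n)$ and its restriction to $\mb P_\varrho\bigl(L^2(\mb T^n)\bigr)$; the only work is to check invariance.

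For part (i), I would identify $H^2(\mb D^n)$ with its boundary-value realization inside $L^2(\mb T^n)$, under which the coordinate multiplications (and hence multiplication by any polynomial) on the two spaces agree. Because each $\theta_k$ is a polynomial, $M_{\theta_k}H^2(\mb D^n)\subseteq H^2(\mb D^n)$, so $H^2(\mb D^n)$ is a common invariant subspace of the $\bl\Theta_n$-unitary $(M_{\theta_1},\ldots,M_{\theta_n})$ on $L^2(\mb T^n)$. By definition, the restriction is a $\bl\Theta_n$-isometry, which gives (i).

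For part (ii), I would view $\mb P_\varrho^{ii}\bigl(H^2(\mb D^n)\bigr)$ as a subspace of $\mb P_\varrho\bigl(L^2(\mb T^n)\bigr)$ and check that it is invariant under $(M_{\theta_1},\ldots,M_{\theta_n})$ restricted to that $\bl\Theta_n$-unitary. Two ingredients suffice. First, exactly the same averaging argument as in Lemma \ref{redu}, but with the character $\chi_\varrho(\sigma^{-1})$ replaced by the matrix coefficient $\pi_\varrho^{ji}(\sigma^{-1})$, shows that $M_{\theta_k}\mb P_\varrho^{ii}=\mb P_\varrho^{ii}M_{\theta_k}$ on $L^2(\mb T^n)$ for each $k$, using $G$-invariance of $\theta_k$. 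Second, given $f\in \mb P_\varrho^{ii}\bigl(H^2(\mb D^n)\bigr)$, we have $M_{\theta_k}f=\theta_k f\in H^2(\mb D^n)$ since $\theta_k$ is a polynomial, and $\mb P_\varrho^{ii}(M_{\theta_k}f)=M_{\theta_k}\mb P_\varrho^{ii}f=M_{\theta_k}f$, hence $M_{\theta_k}f\in \mb P_\varrho^{ii}\bigl(H^2(\mb D^n)\bigr)$. Applying the definition of $\bl\Theta_n$-isometry to the ambient $\bl\Theta_n$-unitary on $\mb P_\varrho\bigl(L^2(\mb T^n)\bigr)$ yields (ii).

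There is no serious obstacle; the only point worth spelling out carefully is the commutativity $M_{\theta_k}\mb P_\varrho^{ii}=\mb P_\varrho^{ii}M_{\theta_k}$, which is a routine extension of Lemma \ref{redu} and is the crux underlying why $\mb P_\varrho^{ii}\bigl(H^2(\mb D^n)\bigr)$ is preserved by the model $\bl\Theta_n$-unitary. Everything else is an invocation of definitions and of the preceding theorem.
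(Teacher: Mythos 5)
Your proof is correct, and for part (i) it coincides with the paper's argument (restrict the $\bl\Theta_n$-unitary on $L^2(\mb T^n)$ to the invariant subspace $H^2(\mb D^n)$ and quote the definition). For part (ii), however, you take a genuinely different and more direct route. The paper does not exhibit an explicit unitary extension: it first invokes Theorem \ref{reduced} to conclude that the restricted tuple on $\mb P_\varrho^{ii}\big(H^2(\mb D^n)\big)$ is a $\bl\Theta_n$-contraction, then verifies the algebraic relations $M_{\theta_n}^*M_{\theta_n}=I$ and $(M_{\theta_n}^p)^*M_{\theta_j}=M_{\theta_{n-j}}$, and finally appeals to the intrinsic Wold-type characterization of $\bl\Theta_n$-isometries (Theorem \ref{wold}, via Lemma \ref{projection}). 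You instead build the required $\bl\Theta_n$-unitary dilation by hand: extending Lemma \ref{redu} from the character averages $\mb P_\varrho$ to the matrix-coefficient averages $\mb P_\varrho^{ii}$ (the same one-line computation, using $G$-invariance of each $\theta_k$) gives $M_{\theta_k}\mb P_\varrho^{ii}=\mb P_\varrho^{ii}M_{\theta_k}$ on $L^2(\mb T^n)$, whence $\mb P_\varrho^{ii}\big(H^2(\mb D^n)\big)$ is a common invariant (closed, since $\mb P_\varrho^{ii}$ is a bounded idempotent) subspace of the $\bl\Theta_n$-unitary on $\mb P_\varrho\big(L^2(\mb T^n)\big)$ -- or, just as well, of the one on all of $L^2(\mb T^n)$, which spares you the inclusion $\mb P_\varrho^{ii}\big(H^2(\mb D^n)\big)\subseteq\mb P_\varrho\big(L^2(\mb T^n)\big)$ -- and the definition of $\bl\Theta_n$-isometry applies verbatim. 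Your approach buys a self-contained argument that bypasses Theorem \ref{wold} and the verification of the adjoint relations, at the small cost of the extended commutation lemma and the boundary-value identification of $H^2(\mb D^n)$ inside $L^2(\mb T^n)$; the paper's approach buys uniformity, since the same contraction-plus-relations template is reused elsewhere.
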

\begin{proof}
The tuple $(M_{\theta_1},\ldots,M_{\theta_n})$ on $H^2(\mb D^n)$ is the restriction of the $\bl\Theta_n$-unitary on $L^2(\mb T^n).$ Thus (i) follows from the definition of  $\bl\Theta_n$-isometry.

It is immediate from Theorem \ref{reduced} and Theorem \ref{wold} that $(M_{\theta_1},\ldots,M_{\theta_n})\vert_{\mb P_\varrho^{ii}\big(H^2(\mathbb D^n)\big)}$ is a $\bl\Theta_n$-contraction satisfying 
\Bea
M_{\theta_n}^*M_{\theta_n}=I\text{~and~}  (M^p_{\theta_n})^*M_{\theta_j}=M_{\theta_{n-j}} \text{~for~} j=1,\ldots,n-1.
\Eea
Hence $(M_{\theta_1},\ldots,M_{\theta_n})\vert_{\mb P_\varrho^{ii}\big(H^2(\mathbb D^n)\big)}$ is a $\bl\Theta_n$-isometry by Lemma \ref{projection} and Theorem \ref{wold}.  This proves (ii).
\end{proof}
Combining Corollary \ref{number} and Corollary \ref{iso}, we conclude the following result.
\begin{cor}
The collection $\{(M_{\theta_1},\ldots,M_{\theta_n})\vert_{\mb P_\varrho^{ii}\big(H^2(\mb D^n)\big)}:\varrho\in\widehat G,\,\,1\leq i\leq\deg\varrho\}$ contains precisely $\vert \widehat G\vert$ number of mutually inequivalent $\bl\Theta_n$-isometries. 
\end{cor}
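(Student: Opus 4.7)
The plan is to combine the two cited results in a straightforward way. First, I would invoke Corollary \ref{iso}(ii) to conclude that every tuple in the collection, namely $(M_{\theta_1},\ldots,M_{\theta_n})\vert_{\mb P_\varrho^{ii}(H^2(\mb D^n))}$ for $\varrho\in\widehat G$ and $1\leq i\leq\deg\varrho$, is indeed a $\bl\Theta_n$-isometry. This gives the existence of isometries; what remains is the counting assertion.

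Next, to count mutually unitarily inequivalent members, I would specialize Corollary \ref{number} (equivalently, the inequivalence statement in Theorem \ref{Theta}) to the case $\lambda=1$, that is, to the Hardy module $H^2(\mb D^n)$, which is an analytic Hilbert module sitting inside $L^2(\mb T^n)$ and whose reproducing kernel is $G$-invariant. This directly gives $|\widehat G|$ mutually inequivalent families. Within a fixed $\varrho$, the submodules $\mb P_\varrho^{ii}(H^2(\mb D^n))$ and $\mb P_\varrho^{jj}(H^2(\mb D^n))$ are unitarily equivalent over $\mb C[\bl z]^G$ by Proposition \ref{inequihilmod}(3), and since the intertwining unitary commutes with the action of each $\theta_k\in\mb C[\bl z]^G$, it intertwines the $n$-tuples of restriction operators $(M_{\theta_1},\ldots,M_{\theta_n})\vert_{\mb P_\varrho^{ii}(H^2(\mb D^n))}$ and $(M_{\theta_1},\ldots,M_{\theta_n})\vert_{\mb P_\varrho^{jj}(H^2(\mb D^n))}$ as well. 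Conversely, across distinct $\varrho,\varrho^\i\in\widehat G$, Theorem \ref{sm} (which applies since $H^2(\mb D^n)\subseteq L^2(\mb T^n)$, and the same argument works with $d m$ replaced by Lebesgue measure on $\mb T^n$) rules out unitary equivalence of the corresponding restricted tuples for any choice of $i,j$.

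Putting these three ingredients together, the equivalence classes of the collection under unitary equivalence of $\bl\Theta_n$-isometries are in bijection with $\widehat G$, which is the desired conclusion. I do not anticipate any genuine obstacle here: the only minor point of care is to note that Theorem \ref{sm} was stated for $\m H\subseteq L^2(\mb D^n,dm)$, and one must observe that its proof goes through verbatim with $\mb D^n$ replaced by $\mb T^n$ and $dm$ replaced by normalized Lebesgue measure, a remark that is in fact explicitly made in the third bullet of the remark following Theorem \ref{sm}.
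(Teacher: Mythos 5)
Your proposal is correct and follows essentially the same route as the paper, which simply combines Corollary \ref{iso} with Corollary \ref{number} (i.e.\ Theorem \ref{sm} together with Proposition \ref{inequihilmod}). Your extra care about the fact that the Hardy norm lives on $\mb T^n$ rather than on $\mb D^n$, handled via the remark following Theorem \ref{sm}, is exactly the point the paper leaves implicit, so no genuinely different argument is involved.
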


\begin{ex}
    Let $\mathfrak S_n$ denote the permutation group of $n$ symbols. The natural action of $ \mathfrak S_n$ on $\mb C^n$
is given as follows: For $(\sigma,\bl z)\in  \mathfrak S_n\times \mb C^n,$ $$(\sigma, \bl z)\mapsto {\sigma.\bl z}:=(z_{\sigma(1)},\ldots,z_{\sigma(n)}).$$ The symmetrization map \begin{eqnarray}\label{symm}\bl s :=(s_1,\ldots,s_n) : \mb D^n \to \mb G_n\end{eqnarray} is a basic polynomial map associated to the complex reflection group $\mathfrak S_n.$

A finite sequence $\bl p=(p_1\ldots p_k)$ of positive integers with $p_1\geq \ldots\geq p_k$ is  a \textit{partition} of $n,$ if $\sum_{i=1}^kp_i=n$, denoted by $\bl p \vdash n$. 
 The set  $\widehat{\mathfrak S_n}$ of equivalence classes of finite dimensional irreducible representations of the permutation group $\mathfrak S_n$ is parametrized by the partitions $\bl p$ of $n.$  
Since the character $\chi_{(n)}$ is identically $1$ on $\mathfrak S_n$, where $(n)$ denotes the partition $(n,0,\ldots, 0)$ that corresponds to trivial representation and the character $\chi_{(1^n)}$ is $1$ for an even permutation and $-1$ for an odd permutation in $\mathfrak S_n$, where $(1^n)$ denotes the partition $(1,\ldots, 1)$ that corresponds to sign representation, the permanent and the determinant of a matrix are special cases of immanants.

Let $\bl p \vdash n$ be a partition and $\mb{A}^{(\l)}_{\bl p, i} (\mb{D}^n) := \mb P^{ii}_{\bl p}(\mb{A}^{(\l)} (\mb{D}^n)).$ Then we have the following result from Theorem \ref{sm}.

\begin{thm}\label{Gamma}
    $\{(M_{s_1},\ldots,M_{s_n})|_{\mb P^{11}_{\bl p} (\mb A^{(\l)}(\mb D^n))} : \bl p \vdash n\}$ is a family of mutually unitarily inequivalent $\Gamma_n$-contractions.
\end{thm}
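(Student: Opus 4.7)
The plan is to derive Theorem \ref{Gamma} as a direct specialization of the general framework already built up, rather than re-proving anything from scratch. Specifically, I would identify $\mathfrak S_n$ with $G(1,1,n)$; for this group the basic polynomial map is the symmetrization map $\bl s=(s_1,\ldots,s_n)$, so $\bl\theta = \bl s$ and consequently $\bl\Theta_n = \bl s(\ov{\mb D}^n) = \Gamma_n$. Under this identification every $\bl\Theta_n$-contraction is a $\Gamma_n$-contraction, and $\widehat G = \widehat{\mathfrak S_n}$ is parametrized by the partitions $\bl p \vdash n$.

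First I would verify that each member of the listed family is a $\Gamma_n$-contraction. This is immediate from Theorem \ref{Theta}: for $G = G(1,1,n)$ and any $\varrho \in \widehat{\mathfrak S_n}$, the restriction $(M_{\theta_1},\ldots,M_{\theta_n})|_{\mb P_\varrho^{ii}(\mb A^{(\l)}(\mb D^n))}$ is a $\bl\Theta_n$-contraction, which in our setting is exactly a $\Gamma_n$-contraction. Setting $i=1$ and letting $\bl p$ range over partitions gives the family in the statement.

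Second, to obtain mutual unitary inequivalence, I would invoke Theorem \ref{sm}. Its hypothesis requires $\m H \subseteq L^2(\mb D^n, dm)$ for some Borel measure $m$; this is satisfied by $\m H = \mb A^{(\l)}(\mb D^n) \subseteq L^2(\mb D^n, dV^{(\l)})$ for every $\l \geq 1$. For two distinct partitions $\bl p \neq \bl q$ of $n$, the corresponding irreducible representations $\varrho_{\bl p}, \varrho_{\bl q}$ are inequivalent, so applying Theorem \ref{sm} with $\varrho = \varrho_{\bl p}$, $\varrho' = \varrho_{\bl q}$ and $j = k = 1$ directly yields that
\[
(M_{s_1},\ldots,M_{s_n})\big|_{\mb P^{11}_{\bl p}(\mb A^{(\l)}(\mb D^n))} \quad \text{and} \quad (M_{s_1},\ldots,M_{s_n})\big|_{\mb P^{11}_{\bl q}(\mb A^{(\l)}(\mb D^n))}
\]
are not unitarily equivalent as $n$-tuples of operators.

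There is no genuine obstacle here; the work is essentially bookkeeping, confirming that the specialization $G=G(1,1,n)$ identifies the basic polynomial map with $\bl s$ and $\bl\Theta_n$ with $\Gamma_n$, and then citing Theorem \ref{Theta} for the $\Gamma_n$-contraction property and Theorem \ref{sm} for the pairwise inequivalence. If any step deserves extra care, it is making explicit that the parametrization of $\widehat{\mathfrak S_n}$ by partitions $\bl p \vdash n$ (as recalled in the preceding example) gives distinct equivalence classes for distinct $\bl p$, so that the hypothesis $\varrho \neq \varrho^\i$ of Theorem \ref{sm} is met for all $\bl p \neq \bl q$.
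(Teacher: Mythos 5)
Your proposal is correct and matches the paper's own derivation: the paper obtains Theorem \ref{Gamma} exactly by specializing to $G(1,1,n)=\mathfrak S_n$ (so $\bl\theta=\bl s$ and $\bl\Theta_n=\Gamma_n$) and citing Theorem \ref{sm} for the pairwise unitary inequivalence, with the $\Gamma_n$-contraction property supplied by Theorem \ref{Theta}. The concrete argument the paper gives afterwards (Proposition \ref{inequiv}) is only an alternative proof for the one-dimensional representations, so nothing further is required of your route.
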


We provide an alternative proof of this theorem for one-dimensional representations of $\mathfrak{S}_n.$ We set $\mb{A}^{(\l)}_{\rm sign} (\mb{D}^n) := \mb P_{(1^n)}(\mb{A}^{(\l)} (\mb{D}^n))$ and $\mb{A}^{(\l)}_{\rm triv} (\mb{D}^n) := \mb P_{(n)}(\mb{A}^{(\l)} (\mb{D}^n)).$ 

\begin{prop}\label{inequiv}
The reducing submodules $\mb A^{(\l)}_{\rm triv}(\mb D^n)$ and $\mb A^{(\l)}_{\rm sign}(\mb D^n)$ over the ring $\mb C[z_1,\ldots,z_n]^{\mathfrak S_n}$ are not unitarily equivalent for $\l >1.$
\end{prop}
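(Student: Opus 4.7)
The plan is to mimic the $\mathfrak S_n$-averaging step from the proof of Theorem~\ref{sm} and then cash in on the one-dimensionality of the trivial and sign characters to reach a contradiction very cleanly. Assume there is a unitary module equivalence $U \colon \mb A^{(\l)}_{\rm triv}(\mb D^n) \to \mb A^{(\l)}_{\rm sign}(\mb D^n)$ over $\mb C[\bl z]^{\mathfrak S_n}$, fix a nonzero $f \in \mb A^{(\l)}_{\rm triv}(\mb D^n)$, and set $g := U(f) \in \mb A^{(\l)}_{\rm sign}(\mb D^n)$. The intertwining relation forces $\|p f\|^2 = \|p g\|^2$ for every symmetric polynomial $p$; following the same averaging and density argument as in the proof of Theorem~\ref{sm} (where the Jacobians of permutations are unimodular and $dV^{(\l)}$ is $\mathfrak S_n$-invariant), this integrated identity upgrades to the pointwise identity
\[
\sum_{\sigma \in \mathfrak S_n} |f(\sigma \cdot \bl z)|^2 \;=\; \sum_{\sigma \in \mathfrak S_n} |g(\sigma \cdot \bl z)|^2, \qquad \bl z \in \mb D^n.
\]

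Here the one-dimensionality of the characters becomes decisive: since $f$ is $\mathfrak S_n$-invariant and $g$ transforms by the sign character, every summand on the left equals $|f(\bl z)|^2$ and every summand on the right equals $|g(\bl z)|^2$, collapsing the identity to $|f(\bl z)| = |g(\bl z)|$ on all of $\mb D^n$. The open set $\{f \neq 0\}$ is connected, being the complement of an analytic subvariety of complex codimension at least one in the connected manifold $\mb D^n$; on this set the ratio $g/f$ is a holomorphic function of constant modulus one, hence a unimodular constant $c$ by the open mapping theorem. Analytic continuation across the zero locus of $f$ then yields $g = c f$ on all of $\mb D^n$.

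To conclude, for any transposition $\tau \in \mathfrak S_n$ the antisymmetry of $g$ gives $g(\tau \cdot \bl z) = -g(\bl z)$, while $g = c f$ together with the symmetry of $f$ gives $g(\tau \cdot \bl z) = c f(\tau \cdot \bl z) = c f(\bl z) = g(\bl z)$; for $n \geq 2$ this forces $g \equiv 0$, hence $f \equiv 0$ by injectivity of $U$, contradicting the choice of nonzero $f$. The main technical step is the very first one, where the integral identity is converted into the pointwise sum-of-squares identity; this is the only place where any nontrivial analysis is required, namely a Stone--Weierstrass density of the self-adjoint unital subalgebra $\{p \bar q : p,q \in \mb C[\bl z]^{\mathfrak S_n}\}$ in the space of $\mathfrak S_n$-invariant continuous functions on $\overline{\mb D^n}$, after which the argument proceeds by elementary complex analysis and representation theory.
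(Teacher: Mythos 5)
Your argument is correct, but it is a genuinely different route from the paper's own proof of this proposition. The paper's proof is kernel-theoretic and quantitative: it observes that the joint kernel of $(\mathbf M_{\bl s}-\bl s(0))^*$ in each submodule is one-dimensional, so a unitary module map must send $1$ to a unimodular multiple of $\mb P_{\rm sign}K_\lambda(\cdot,0)$ (a multiple of $J_{\bl s}$), and then compares the norms $\Vert \bl z^{(1,\ldots,1)}\Vert^2$ and $\Vert \bl z^{(1,\ldots,1)}J_{\bl s}\Vert^2$ in $\mb A^{(\lambda)}(\mb D^n)$ to arrive at the identity $\lambda^{-n}=n!/(\lambda)_n$, which fails precisely when $\lambda>1$; this is deliberately offered in the paper as an alternative to the averaging argument of Theorem \ref{sm}. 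You instead rerun the averaging step of Theorem \ref{sm} to get $\sum_{\sigma}|f(\sigma\cdot\bl z)|^2=\sum_{\sigma}|g(\sigma\cdot\bl z)|^2$, but then finish more cheaply than the paper does there: one-dimensionality of the characters collapses the identity to $|f|=|g|$ on $\mb D^n$, the constant-modulus rigidity of holomorphic functions gives $g=cf$, and symmetry versus antisymmetry yields the contradiction, avoiding both Fricain's lemma and the paper's explicit norm computations. What your approach buys is uniformity (it never uses $\lambda>1$ beyond the embedding $\mb A^{(\lambda)}(\mb D^n)\subseteq L^2(\mb D^n,dV^{(\lambda)})$, and it immediately extends to any pair of distinct one-dimensional characters, since $g=cf$ forces the two characters to coincide); what it costs is that the key analytic step, upgrading the integral identity to the pointwise one, must be justified carefully: you correctly note that the span of $\{p\bar q: p,q\in\mb C[\bl z]^{\mathfrak S_n}\}$ is a self-adjoint unital algebra, but you should also record that it separates $\mathfrak S_n$-orbits in $\overline{\mb D}^n$ (so Stone--Weierstrass applies on the quotient), that the invariant density $\sum_\sigma(|f\circ\sigma|^2-|g\circ\sigma|^2)$ is in $L^1(dV^{(\lambda)})$ so uniform approximation passes to the integrals, and that $dV^{(\lambda)}$ has full support while the density is continuous, which is what turns ``zero almost everywhere'' into the pointwise identity. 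These are the same points the paper glosses over in Theorem \ref{sm}, so your proof is sound, just not the kernel-vector argument the paper uses here.
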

\begin{proof}

Fix a $\l>1.$ For $\bl w = (0,\ldots,0),$ we write $ K_{{\rm sign},0}(\cdot) := \mb P_{\rm sign}K_{\l}(\cdot, \bl w)\overline{\big(J_{\bl s}(\bl w)\big)^{-1}} = \frac{(\l)_{\bl\delta}}{\bl\delta! n!} J_{\bl s}(\cdot).$ For $\bl w=(0,\ldots,0),$ $\bigcap_{i=1}^n {\rm ker}(\mathbf M_{s_i}^{(\rm sign)} - s_i(\bl w))^*$ is spanned by $K_{{\rm sign},0}$ 
and $\bigcap_{i=1}^n {\rm ker}(\mathbf M_{s_i}^{(\rm triv)} - s_i(\bl w))^*$ is spanned by $1.$ 

Arguing by contradiction, suppose that $U : \mb A^{(\l)}_{\rm triv}(\mb D^n) \to \mb A^{(\l)}_{\rm sign}(\mb D^n)$ is a unitray map which intertwines the module actions of those Hilbert modules over the ring $\mb C[z_1,\ldots,z_n]^{\mathfrak S_n}.$ Then,  for every $\bl w \in \mb D^n$, the map $U^* : \bigcap_{i=1}^n {\rm ker}(\mathbf M_{s_i}^{(\rm sign)} - s_i(\bl w))^* \to \bigcap_{i=1}^n {\rm ker}(\mathbf M_{s_i}^{(\rm triv)} - s_i(\bl w))^* $ is unitary. Therefore, $U^*(K_{{\rm sign},0}) = \bar{c} \cdot 1,$ for a unimodular constant $c.$ (since $\big \Vert K_{{\rm sign},0}\big \Vert^2_{ \mb{A}^{(\l)} (\mb{D}^n)} = \frac{(\l)_{\bl\delta}}{\bl\delta!n!}
\big \Vert J_{\bl s}\big \Vert^2_{ \mb{A}^{(\l)} (\mb{D}^n)} = 1,$ $\ov{c}$ is unimodular.) Since $UU^* = I_{\rm sign},$ we conclude that $U(1)= c \cdot K_{{\rm sign},0}.$ Moreover, $U$ intertwines the module actions, so $U(p) = p U(1) = c \cdot (p K_{{\rm sign},0}) = c\frac{(\l)_{\bl\delta}}{\bl\delta!n!} \cdot (J_{\bl s}p),$ for $p \in \mb C[z_1,\ldots,z_n]^{\mathfrak S_n}.$ 

One can easily check that for $\bl m =(m_1,\ldots,m_n),$ $\Vert \bl z^{\bl m}\Vert^2_{ \mb{A}^{(\l)} (\mb{D}^n)} =\big \Vert  z_1^{ m_1}\ldots z_n^{m_n}\big \Vert^2_{ \mb{A}^{(\l)} (\mb{D}^n)}=\frac{m_1!\ldots m_n!}{(\l)_{m_1}\ldots (\l)_{m_n}}.$ Hence $\Vert s_n(\bl z) \Vert^2_{ \mb{A}^{(\l)} (\mb{D}^n)}= \Vert \bl z^{(1,\ldots,1)}\Vert^2_{ \mb{A}^{(\l)} (\mb{D}^n)} = \frac{1}{\l^n}.$

Again, $\Vert \bl z^{(1,\dots,1)} J_{\bl s}\Vert^2_{ \mb{A}^{(\l)} (\mb{D}^n)} = \frac{n! \bl m!}{(\l)_{m_1}\ldots (\l)_{m_n}},$ where $\bl m = (n,n-1,\ldots,2,1).$

Since $U$ is isometry, \Bea \Vert \bl z^{(1,\dots,1)} \Vert^2_{ \mb{A}^{(\l)} (\mb{D}^n)} &=& \frac{(\l)_{\bl\delta}}{\bl\delta!n!} \Vert \bl z^{(1,\dots,1)} J_{\bl s}\Vert^2_{ \mb{A}^{(\l)} (\mb{D}^n)}, \\ \frac{1}{\l^n} &=& \frac{(\l)_{n-1}(\l)_{n-2}\ldots (\l)_{1}(\l)_0}{\bl\delta!n!}  \frac{n! \bl m!}{(\l)_{n}(\l)_{n-1}\ldots (\l)_{2}(\l)_{1}}, \\ \frac{1}{\l^n} &=& \frac{n!}{(\l)_{n}}.\Eea
The last equality holds only if $\l =1,$ which leads us to a contradiction as $\l >1$.
\end{proof}
\end{ex}

\subsection{Similarity}
In view of Proposition \ref{inequiv}, it is natural to ask whether $\mb A_{\rm triv}^{(\l)}(\mb D^n)$ and $\mb A_{\rm sign}^{(\l)}(\mb D^n)$ are similar over $\mb C[z_1,\ldots,z_n]^{\mathfrak S_n}.$ We show for  $\l=1,2$ the submodules $\mb A_{\rm triv}^{(\l)}(\mb D^2)$ and $\mb A_{\rm sign}^{(\l)}(\mb D^2)$ are not similar over $\mb C[z_1,z_2]^{\mathfrak S_2}.$ We start with the following lemma.
\begin{lem}\label{surj}
Let $S:\mb A^{(\l)}_{\rm triv}(\mb D^2)\to \mb A^{(\l)}_{\rm sign}(\mb D^2)$ be a linear surjection satisfying $SM_{s_i}=M_{s_i}S$ for $i=1,2.$ Then $S=M_\varphi,$ where $\varphi=cJ_{\bl s}$ for some $c\in \mb C.$
\end{lem}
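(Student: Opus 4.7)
The plan is to first show that $S$ is a multiplication operator and then pin down the multiplier using surjectivity together with the structure of the reproducing kernels.

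For the first step, the symmetric polynomial algebra $\mb C[s_1,s_2]$ is dense in $\mb A^{(\l)}_{\rm triv}(\mb D^2)$ (being the image of $\mb C[z_1,z_2]$ under the symmetric projection applied to the analytic Hilbert module $\mb A^{(\l)}(\mb D^2)$), and the intertwining relations $SM_{s_i}=M_{s_i}S$ together with the (implicit) boundedness of $S$ force $S(p)=p\cdot S(1)$ for every $p\in\mb C[s_1,s_2]$. Continuous extension gives $S=M_\varphi$ with $\varphi:=S(1)\in\mb A^{(\l)}_{\rm sign}(\mb D^2)$. Antisymmetry of $\varphi$ forces vanishing on the diagonal $\{z_1=z_2\}$, so $\psi:=\varphi/J_{\bl s}$ extends to a holomorphic symmetric function on $\mb D^2$ and $\varphi=J_{\bl s}\psi$.

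The main obstacle is showing $\psi$ is constant. My plan is to combine surjectivity with the reproducing-kernel adjoint identity $M_\varphi^*K^{\rm sign}_w=\overline{\varphi(w)}K^{\rm triv}_w$ on $\mb D^2$. Injectivity of $M_\varphi$ (from $\varphi\not\equiv 0$) and surjectivity yield, via the open mapping theorem, that $M_\varphi$ is bijective with bounded inverse, so $M_\varphi^*$ is bounded both above and below. Evaluating on $K^{\rm sign}_w$ gives the two-sided estimate
\[
c^2 K^{\rm sign}(w,w)\le|\varphi(w)|^2 K^{\rm triv}(w,w)\le C^2 K^{\rm sign}(w,w),\quad w\in\mb D^2.
\]
Using $K^{\rm triv}_\l(z,w)=\tfrac12(K_\l(z,w)+K_\l(\sigma z,w))$ and $K^{\rm sign}_\l(z,w)=\tfrac12(K_\l(z,w)-K_\l(\sigma z,w))$ with $K_\l(z,w)=\prod_i(1-z_i\bar w_i)^{-\l}$, and substituting $\varphi=J_{\bl s}\psi$, the inequality converts to a two-sided comparison of $|\psi(w)|^2$ against a specific ratio of the symmetric/antisymmetric kernels divided by $|J_{\bl s}(w)|^2$. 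This comparison function is not of the form $|h\circ\bl s|^2$ for any holomorphic $h$ on $\bl s(\mb D^2)$; since $\psi$ is a symmetric holomorphic function, so that $\log|\psi|^2$ is pluriharmonic off its zero set, the mismatch between the holomorphic modulus-squared $|\psi|^2$ and the non-pluriharmonic kernel ratio forces $\psi$ to be constant. Hence $\varphi=cJ_{\bl s}$ for some $c\in\mb C$.

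The technically heaviest step is the final one — rigidifying the two-sided pointwise kernel estimate into constancy of $\psi$. The argument uses the specific structure of the weighted Bergman kernel under the sign-trivial decomposition and the rigidity of the modulus of a holomorphic function in the $\bl s$-coordinates on $\bl s(\mb D^2)$.
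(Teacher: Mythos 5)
Your first half coincides with the paper's argument: from the intertwining relations and density of the symmetric polynomials one gets $S=M_\varphi$ with $\varphi=S(1)\in\mb A^{(\l)}_{\rm sign}(\mb D^2)$, and antisymmetry gives the factorization $\varphi=J_{\bl s}\psi$ with $\psi$ symmetric holomorphic. The genuine gap is your final rigidity step. The two-sided estimate $c^2K^{\rm sign}(w,w)\le|\varphi(w)|^2K^{\rm triv}(w,w)\le C^2K^{\rm sign}(w,w)$ says only that $|\psi(w)|^2$ is comparable, with fixed constants, to $K^{\rm sign}(w,w)\big/\big(|J_{\bl s}(w)|^2K^{\rm triv}(w,w)\big)$; it is a pair of inequalities, not an identity, so the pluriharmonicity/``modulus rigidity'' heuristic has nothing to act on. Concretely, the set of symmetric holomorphic $\psi$ satisfying such a two-sided comparison (for some constants) is invariant under multiplication by any symmetric holomorphic function bounded above and away from zero on $\mb D^2$, for instance $(2+s_1)/3$; hence if your hypotheses are met by one $\psi$ they are met by infinitely many nonconstant ones, and no estimate of this shape can force constancy. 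Put differently, your argument uses surjectivity only through the conclusion that $M_\varphi$ is bounded above and below, a property that cannot distinguish the symbol $J_{\bl s}$ from $J_{\bl s}(2+s_1)$, whereas the lemma asserts exactly such a distinction. (A smaller point: $S$ is only assumed to be a linear surjection, so invoking the open mapping theorem presupposes a boundedness that is not in the hypothesis; in the paper the open mapping theorem enters only later, in Corollary \ref{simlr}.)

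The paper's own proof uses surjectivity in an exact, pointwise way rather than through norm estimates: since $J_{\bl s}$ lies in the range of $S$, there is $h\in\mb A^{(\l)}_{\rm triv}(\mb D^2)$ with $Sh=J_{\bl s}$, and then $Sh=h\varphi=J_{\bl s}h\psi$ yields the functional identity $h\psi=1$, from which the paper concludes that $h$ and $\psi$ are constants, so $\varphi=cJ_{\bl s}$. The essential input there is this exact pullback of the specific function $J_{\bl s}$ through $S$, which your proposal never exploits; once surjectivity is weakened to two-sided norm bounds, the information needed to pin down the multiplier is lost, so the last step of your plan cannot be completed as described — you would need to reintroduce surjectivity in the exact form used by the paper.
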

\begin{proof}
Let $S(1)=f$ for some $f\in\mb A^{(\l)}_{\rm sign}(\mb D^2).$  Then  $SM_{s_i}=M_{s_i}S$  for $i=1,2$ yields $Sg=gf$ for $g\in  \mb A^{(\l)}_{\rm triv}(\mb D^2).$   Since $S$ is a surjection there exists $h\in  \mb A^{(\l)}_{\rm triv}(\mb D^2)$ such that $Sh=J_{\bl s}=z_1-z_2.$ Thus  $Sh=hf=J_{\bl s}hf_{\rm triv}$ for $f_{\rm triv}\in \mb A^{(\l)}_{\rm triv}(\mb D^2).$ This forces $hf_{\rm triv}=1.$ This is impossible, unless $h$ and $f_{\rm triv}$ are both scalars. This completes the proof. 
\end{proof}
\begin{prop}\label{unbdd}
Let $S:\mb A^{(\l)}_{\rm triv}(\mb D^2)\to \mb A^{(\l)}_{\rm sign}(\mb D^2)$ be defined by $Sf=J_{\bl s}f.$ Then $S$ does not have a bounded inverse for $\l=1, 2.$
\end{prop}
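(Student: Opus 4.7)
The goal is to exhibit a sequence $\{g_n\}\subset\mb A^{(\l)}_{\rm triv}(\mb D^2)$ such that the ratio $\Vert Sg_n\Vert^2/\Vert g_n\Vert^2 = \Vert J_{\bl s}\,g_n\Vert^2/\Vert g_n\Vert^2$ tends to $0$; this is equivalent to $S^{-1}$ being unbounded, since $S$ is an injective linear map whose range is (at least on polynomials) dense in $\mb A^{(\l)}_{\rm sign}(\mb D^2)$. The natural candidate is the sequence of complete homogeneous symmetric polynomials
\[
g_n(z_1,z_2) = \sum_{k=0}^{n-1} z_1^k z_2^{n-1-k} \in \mb C[z_1,z_2]^{\mathfrak S_2}\subset \mb A^{(\l)}_{\rm triv}(\mb D^2),
\]
because the telescoping identity $(z_1-z_2)g_n = z_1^n - z_2^n$ shows that $Sg_n$ has a very short expansion whereas $g_n$ itself has many orthogonal terms.

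Using the orthogonality of monomials in $\mb A^{(\l)}(\mb D^2)$ together with the known norms $\Vert z_1^{m_1}z_2^{m_2}\Vert^2 = \tfrac{m_1!m_2!}{(\l)_{m_1}(\l)_{m_2}}$, one obtains
\[
\Vert Sg_n\Vert^2 = \Vert z_1^n-z_2^n\Vert^2 = \tfrac{2\,n!}{(\l)_n},\qquad
\Vert g_n\Vert^2 = \sum_{k=0}^{n-1}\tfrac{k!(n-1-k)!}{(\l)_k(\l)_{n-1-k}}.
\]
For $\l=1$ every summand equals $1$, so $\Vert g_n\Vert^2 = n$ while $\Vert Sg_n\Vert^2 = 2$. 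For $\l=2$ the summands equal $\tfrac{1}{(k+1)(n-k)}$, and a partial fractions computation gives $\Vert g_n\Vert^2 = \tfrac{2H_n}{n+1}$ (with $H_n$ the harmonic number), whereas $\Vert Sg_n\Vert^2 = \tfrac{2}{n+1}$. In both cases, the ratio $\Vert Sg_n\Vert^2/\Vert g_n\Vert^2$ is $2/n$ and $1/H_n$ respectively, tending to $0$ as $n\to\infty$.

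To conclude, suppose for contradiction that $S$ has a bounded inverse $S^{-1}:\mb A^{(\l)}_{\rm sign}(\mb D^2)\to \mb A^{(\l)}_{\rm triv}(\mb D^2)$. Then $\Vert g_n\Vert = \Vert S^{-1}Sg_n\Vert \leq \Vert S^{-1}\Vert\,\Vert Sg_n\Vert$, which forces $\Vert Sg_n\Vert^2/\Vert g_n\Vert^2 \geq 1/\Vert S^{-1}\Vert^2 > 0$, contradicting the asymptotics computed above. The main (minor) obstacle is simply the combinatorial evaluation of $\Vert g_n\Vert^2$ in the Bergman case $\l=2$; the telescoping identity $(z_1-z_2)g_n = z_1^n-z_2^n$ is what makes everything work, since it is exactly the polynomial identity $(z_1-z_2)\,h_{n-1}(z_1,z_2) = p_n(z_1)-p_n(z_2)$ reflecting the Jacobi–Trudi type cancellation for $\mathfrak S_2$.
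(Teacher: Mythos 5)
Your proof is correct and is essentially the paper's argument: the paper tests the would-be inverse $T f = f/J_{\bl s}$ on $z_1^m-z_2^m$, whose image is exactly your $g_m$, so the test vectors, the monomial-norm computations ($\Vert z_1^{m_1}z_2^{m_2}\Vert^2=\tfrac{m_1!m_2!}{(\l)_{m_1}(\l)_{m_2}}$), and the harmonic-sum divergence for $\l=2$ (resp.\ linear growth for $\l=1$) coincide with yours, only viewed from the direction of $S^{-1}$ rather than of $S$ not being bounded below. No gaps; the argument stands as written.
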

\begin{proof}
Noting that $\mb C[z_1,z_2] \subseteq \mb A^{(\l)}(\mb D^2),$ consider the map $T:\mb P_{\rm sign} (\mb C[z_1,z_2])\to \mb C[z_1,z_2]^{\mathfrak S_2}$ defined  by $Tf=\frac{f}{J_{\bl s}}.$  If $T$ is bounded, then there exists $K>0$ satisfying $$\Vert Tf\Vert\leq K\Vert f\Vert \mbox{~for all~} f\in \mb P_{\rm sign}(\mb C[z]).$$
Recall that
\bea\label{norm}
\Vert z_1^mz_2^n\Vert^2=\frac{m!n!}{(\l)_m(\l)_n}.
\eea
Set $e_{m,0}(z_1,z_2)=z_1^m-z_2^m.$ Then $(Te_{m,0})(z_1,z_2)=\sum_{k=1}^{m}z_1^{m-k}z_2^{k-1}$ for $m\geq 1,$ and boundedness of $T$ implies that 
\bea\label{bdd}
\Vert \sum_{k=1}^{m}z_1^{m-k}z_2^{k-1}\Vert^2\leq K^2\Vert e_{m,0}\Vert^2.
\eea 
Putting $\l=2,$ 
it follows from Equation \eqref{bdd} that
$$
\sum_{k=1}^m\frac{1}{(m+1-k)k}\leq \frac{2 K^2}{m+1}.
$$
Equivalently, 
\bea \label{ineq}
\sum_{k=1}^m\frac{1}{(1-\frac{k}{m+1})k}\leq 2K^2 \mbox{~for~} m\geq 1.
\eea
Note that $1-\frac{k}{m+1}<1.$ Hence $\sum_{k=1}^m\frac{1}{(1-\frac{k}{m+1})k}\geq \sum_{k=1}^m\frac{1}{k}.$ Thus, Equation \eqref{ineq} cannot hold for all $m\geq 1.$

To prove the assertion for $\l=1,$ we simply  note from Equation \eqref{norm} that  Equation \eqref{bdd} reduces to $m^2\leq 4K^2$ for all $m\geq 1.$ This contradiction completes the proof.
\end{proof}



\begin{rem}\label{div}
In general, the map $f\mapsto V_n f$, where $V_n(z) = \prod_{i<j}(z_i - z_j)$, from $\mathbb A^{(\lambda)}_{triv}(\mathbb D^n)$ to $\mathbb A^{(\lambda)}_{sign}(\mathbb D^n)$ is one-one and one ask whether this map is onto or not to have a similarity. It turns out that this is the same is asking whether $V_n$ is a 'divisor' in the sense of \cite{D, PS} - that is, if $f\in \mathbb A^{(\lambda)}(\mathbb D^n)$ and $f/V_n\in \mathcal O(\mathbb D^n)$, where $\mathcal O(\mathbb D^n)$ is the algebra of holomorphic functions on $\mathbb D^n$, then is it true that $f/V_n \in \mathbb A^{(\lambda)}(\mathbb D^n)$? The Proposition above shows that the division problem fails for $\lambda = 1, 2$ when $n = 2$.
\end{rem}

\begin{cor}\label{simlr}
There is no linear surjection $S:\mb A^{(\l)}_{\rm triv}(\mb D^2)\to \mb A^{(\l)}_{\rm sign}(\mb D^2)$ satisfying $SM_{s_i}=M_{s_i}S$ for $i=1,2$ and $\l=1,2.$ Consequently, the submodules $\mb A^{(\l)}_{\rm triv}(\mb D^2)$ and $ \mb A^{(\l)}_{\rm sign}(\mb D^2)$ are not similar for $\l=1,2.$
 \end{cor}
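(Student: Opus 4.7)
My plan is to derive the corollary as essentially an immediate consequence of the two preceding results, Lemma \ref{surj} and Proposition \ref{unbdd}.

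I would argue by contradiction. Suppose there exists a bounded linear surjection $S:\mb A^{(\l)}_{\rm triv}(\mb D^2)\to \mb A^{(\l)}_{\rm sign}(\mb D^2)$ with a bounded inverse, satisfying $SM_{s_i}=M_{s_i}S$ for $i=1,2$; this is precisely what it means for the two modules to be similar over $\mb C[z_1,z_2]^{\mathfrak S_2}$. In particular $S$ is a linear surjection intertwining the module actions, so Lemma \ref{surj} applies and yields a constant $c\in\mb C$ such that $S=M_{cJ_{\bl s}}$. Since $S$ is a surjection onto a nonzero space, $c\neq 0$.

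Now I rescale: the operator $\tfrac{1}{c}S=M_{J_{\bl s}}$ is precisely the operator considered in Proposition \ref{unbdd}. A bounded inverse of $S$ would give, after multiplying by $c$, a bounded inverse of $M_{J_{\bl s}}$. But Proposition \ref{unbdd} asserts that $M_{J_{\bl s}}:\mb A^{(\l)}_{\rm triv}(\mb D^2)\to \mb A^{(\l)}_{\rm sign}(\mb D^2)$ has no bounded inverse for $\l=1,2$. This is the contradiction, establishing the first claim of the corollary and hence, by definition of similarity of Hilbert modules, its second claim.

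There is no real obstacle in this plan; the main technical content is already absorbed into Lemma \ref{surj} (which pins down the form of any module-map surjection) and Proposition \ref{unbdd} (which handles the norm-estimate contradiction via the explicit Bergman norms and the divergence of harmonic-type sums). The only modest care needed is to confirm that any similarity is in particular a bounded surjection with bounded inverse that intertwines the action of each $M_{s_i}$, so that Lemma \ref{surj} is applicable; this follows from the definition of similarity of Hilbert modules.
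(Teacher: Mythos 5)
Your argument runs along the same lines as the paper (Lemma \ref{surj} plus Proposition \ref{unbdd}), but it proves only the second, weaker assertion of the corollary and not the first. The first claim is that there is \emph{no linear surjection} $S:\mb A^{(\l)}_{\rm triv}(\mb D^2)\to \mb A^{(\l)}_{\rm sign}(\mb D^2)$ intertwining $M_{s_1},M_{s_2}$ --- with no boundedness or bounded-invertibility hypothesis on $S$ whatsoever. You begin by assuming $S$ is bounded \emph{and} has a bounded inverse (i.e.\ you assume similarity), so your contradiction only rules out similarity; it says nothing about a bare linear surjection. Note also that your closing sentence has the implication backwards: the first claim implies the second, not the other way around, so deducing ``the first claim and hence the second'' from an argument that assumed the negation of the second is not correct as stated.

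The missing step is exactly where the paper invokes the open mapping theorem. Given an arbitrary linear surjection $S$ intertwining the module actions, Lemma \ref{surj} forces $S=M_{cJ_{\bl s}}$ with $c\neq 0$ (as you note, $c=0$ is excluded by surjectivity). But $M_{cJ_{\bl s}}$ is automatically a \emph{bounded} operator, since $J_{\bl s}(z_1,z_2)=z_1-z_2$ is a bounded function on $\mb D^2$, and it is injective because multiplication by a nonzero holomorphic function is injective on a space of holomorphic functions. Hence $S$ is a bounded bijection between Hilbert spaces, and the open mapping theorem supplies a bounded inverse for $M_{cJ_{\bl s}}$, hence for $M_{J_{\bl s}}$, contradicting Proposition \ref{unbdd} for $\l=1,2$. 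With this insertion your argument proves the full first claim, and the non-similarity statement then follows as the special case in which $S$ is additionally assumed invertible.
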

\begin{proof}
Follows immediately from Lemma \ref{surj}, Proposition \ref{unbdd} and the open mapping theorem.
 \end{proof}   
 
The previous corollary can be rephrased in terms of $\Gamma_2$-contractions.
\begin{thm}\label{similar}
    The $\Gamma_2$-contractions $(M_{s_1},M_{s_2})|_{\mb A^{(\l)}_{\rm triv}(\mb D^2)}$ and $(M_{s_1},M_{s_2})|_{\mb A^{(\l)}_{\rm sign}(\mb D^2)}$ are not similar for $\l =1,2$. 
\end{thm}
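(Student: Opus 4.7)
The plan is to observe that Theorem \ref{similar} is essentially a direct rephrasing of Corollary \ref{simlr} in the language of $\Gamma_2$-contractions, and then to invoke the corollary. By definition, the two $\Gamma_2$-contractions $(M_{s_1},M_{s_2})|_{\mb A^{(\l)}_{\rm triv}(\mb D^2)}$ and $(M_{s_1},M_{s_2})|_{\mb A^{(\l)}_{\rm sign}(\mb D^2)}$ are similar precisely when there exists a bounded invertible operator
\[
S:\mb A^{(\l)}_{\rm triv}(\mb D^2)\to \mb A^{(\l)}_{\rm sign}(\mb D^2)
\]
such that $S M_{s_i}|_{\rm triv}=M_{s_i}|_{\rm sign}\,S$ for $i=1,2$.

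Next I would note that by the Chevalley--Shephard--Todd theorem the ring of $\mathfrak S_2$-invariant polynomials coincides with $\mb C[s_1,s_2]$. Thus an intertwiner as above is exactly a module isomorphism between the Hilbert modules $\mb A^{(\l)}_{\rm triv}(\mb D^2)$ and $\mb A^{(\l)}_{\rm sign}(\mb D^2)$ over $\mb C[z_1,z_2]^{\mathfrak S_2}$. In particular, any such $S$ is a linear surjection intertwining multiplication by $s_1$ and $s_2$, and the existence of such a surjection is precisely what Corollary \ref{simlr} rules out for $\l=1,2$.

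Therefore the proof is a one-line deduction: assume such an $S$ exists to obtain a linear surjection intertwining $M_{s_1}$ and $M_{s_2}$, contradicting Corollary \ref{simlr}. The main content was already absorbed into the proof of Corollary \ref{simlr}, which combines Lemma \ref{surj} (forcing any such intertwiner to be multiplication by a scalar multiple of the Jacobian $J_{\bl s}=z_1-z_2$) with Proposition \ref{unbdd} (showing the resulting multiplication operator $f\mapsto J_{\bl s} f$ fails to have a bounded inverse for $\l=1,2$). There is no serious obstacle at this stage; the proof is essentially a translation between the two equivalent viewpoints --- similarity of tuples versus similarity of Hilbert modules over the invariant ring.
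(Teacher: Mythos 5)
Your proposal is correct and matches the paper's approach: the paper states Theorem \ref{similar} as a direct rephrasing of Corollary \ref{simlr}, exactly as you argue, with the real work residing in Lemma \ref{surj} and Proposition \ref{unbdd}. Nothing further is needed.
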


\begin{ex}
    Consider the polynomial map $$\bl \phi(z_1,z_2):=(\phi_1,\phi_2) = (z_1^k + z_2^k, z_1z_2)$$ on $\mb D^2.$ It is a proper holomorphic map of multiplicity $2k$ and let us denote the domain $\phi(\mb D^2)$ by $\mathscr{D}_{2k}.$ The domain $\mathscr{D}_{2k}$ is biholomorphically equivalent to the quotient domain $\mb{D}^2/D_{2k}$  where $D_{2k}$ is the dihedral group of order $2k,$ that is,
$$D_{2k} = \langle \delta, \sigma : \delta^k=\sigma^2 ={\rm id, \sigma \delta \sigma^{-1} = \delta^{-1}} \rangle.$$
See \cite[subsection~3.1.1]{BDGS} and \cite[p. 19]{Ghosh} for more details. Clearly, $J_{\bl \phi}(z_1,z_2) = k(z_1^k -z_2^k).$

We get the following result from Theorem \ref{sm}. For some $\mu \in \widehat{D}_{2k},$ set $\mb A^{(\l)}_{\mu,i} (\mb D^2):=\mb P^{ii}_\mu (\mb A^{(\l)}(\mb D^2)).$ Let $\mu_1$ and $\mu_2$ be two inequivalent representations of $D_{2k}.$ The joint reducing submodules $\mb A^{(\l)}_{\mu_1,i}(\mb D^2)$ and $\mb A^{(\l)}_{\mu_2,j}(\mb D^2)$ over the ring $\mb C[z_1,z_2]^{D_{2k}}$ are not unitarily equivalent for $\l \geq 1.$
\begin{thm}\label{Di}
    $\{(M_{\phi_1},M_{\phi_2})|_{\mb P^{11}_\mu (\mb A^{(\l)}(\mb D^2))} : \mu\in \widehat{D}_{2k}\}$ is a family of mutually (unitary) inequivalent $\mathscr D_{2k}$-contractions.
\end{thm}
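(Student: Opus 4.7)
The plan is to obtain Theorem \ref{Di} as a direct specialization of the general machinery built up in the paper, essentially mimicking the proof of Theorem \ref{Gamma} with the symmetric group replaced by the dihedral group. First I would verify that we are in the setting of Theorem \ref{reduced} and Theorem \ref{sm}: the dihedral group $D_{2k}$ is a finite complex reflection group acting on $\mb D^2$, the weighted Bergman space $\mb A^{(\l)}(\mb D^2)$ is an analytic Hilbert module on $\mb D^2$ over $\mb C[z_1,z_2]$, and its reproducing kernel $K_\l(\bl z,\bl w)=\prod_{i=1}^2(1-z_i\bar w_i)^{-\l}$ is invariant under every complex reflection in $D_{2k}$, hence is $D_{2k}$-invariant. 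In particular $\bl\phi=(\phi_1,\phi_2)$ is a basic polynomial map associated to $D_{2k}$ and $\bl\phi(\mb D^2)=\mathscr D_{2k}=\bl\Theta_2$ in the notation of the paper.

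Next I would verify that each tuple in the collection is a $\mathscr D_{2k}$-contraction. By Proposition \ref{vonN}(ii), the restriction of $\mathbf M_{\bl\phi}=(M_{\phi_1},M_{\phi_2})$ to $\mb A^{(\l)}(\mb D^2)$ is a $\mathscr D_{2k}$-contraction for every $\l\geq 1$. Since each $\mb P^{ii}_\mu\big(\mb A^{(\l)}(\mb D^2)\big)$ is a joint reducing submodule over $\mb C[z_1,z_2]^{D_{2k}}$ by Proposition \ref{reduce}, it is in particular a common invariant subspace for $M_{\phi_1},M_{\phi_2}$, and so Corollary \ref{res} (equivalently part (1) of Theorem \ref{reduced}) yields that $(M_{\phi_1},M_{\phi_2})|_{\mb P^{11}_\mu(\mb A^{(\l)}(\mb D^2))}$ is a $\mathscr D_{2k}$-contraction for every $\mu\in\widehat{D}_{2k}$.

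Finally, the mutual unitary inequivalence is a clean application of Theorem \ref{sm}. The hypothesis there required the ambient space to sit inside some $L^2(\mb D^n,dm)$; in our situation $\mb A^{(\l)}(\mb D^2)\subseteq L^2(\mb D^2,dV^{(\l)})$, so Theorem \ref{sm} applies with $G=D_{2k}$ and $(\theta_1,\theta_2)=(\phi_1,\phi_2)$. Thus for any two distinct $\mu_1,\mu_2\in\widehat{D}_{2k}$ the restriction tuples $(M_{\phi_1}^{(\mu_1,1)},M_{\phi_2}^{(\mu_1,1)})$ and $(M_{\phi_1}^{(\mu_2,1)},M_{\phi_2}^{(\mu_2,1)})$ are not unitarily equivalent, which is precisely the desired statement.

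There is really no obstacle: once the identifications $G=D_{2k}$ and $\bl\theta=\bl\phi$ are made, Theorem \ref{reduced}, Corollary \ref{res} and Theorem \ref{sm} assemble into the conclusion without any additional work. The only very mild point to check is that $\mb A^{(\l)}(\mb D^2)\subseteq L^2(\mb D^2,dV^{(\l)})$ so that the $L^2$-hypothesis of Theorem \ref{sm} is satisfied, which is immediate from the definition of the weighted Bergman module.
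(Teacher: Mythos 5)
Your proposal is correct and follows essentially the same route as the paper: the paper also obtains Theorem \ref{Di} by specializing Theorem \ref{sm} (with $G=D_{2k}=G(k,k,2)$, $\bl\theta=\bl\phi$) for the mutual unitary inequivalence, together with the earlier machinery (Proposition \ref{vonN}, Corollary \ref{res}/Theorem \ref{reduced}) to see that each restricted tuple is a $\mathscr D_{2k}$-contraction.
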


The number of one-dimensional representations of the dihedral group $D_{2k}$ in $\widehat{D}_{2k}$ is $2$ if $k$ is odd and $4$ if $k$ is even. Clearly, for every $k \in \mb N$ the trivial representation of $D_{2k}$ and the sign representation of $D_{2k}$ are in $\widehat{D}_{2k}.$ 

\begin{itemize}
    \item The trivial representation ${\rm triv}:D_{2k} \to \mb C^*$ is given by ${\rm triv}(\sigma) =1$ for $\sigma \in D_{2k}.$ 
    \item The sign representation is as defined in Equation \eqref{sign}. 

    \item 
The additional two representations in the case when $k$ is even.  If $k=2j$ for some $j\in \mb N.$ We consider the representation $\varrho_1$ defined as 
\begin{eqnarray*} \varrho_1(\delta) = -1 &\text{~and~}& \varrho_1(\tau) =1 \text{~for~} \tau \in \inner{\delta^2}{\sigma}.
\end{eqnarray*}
\item
For $k=2j,$ the representation $\varrho_2$ is defined as following:
\begin{eqnarray*}
\varrho_2(\delta) = -1 &\text{~and~}& \varrho_2(\tau) =1 \text{~for~} \tau \in \inner{\delta^2}{\delta\sigma}.\end{eqnarray*}

\end{itemize}

Every pair of representations described above are mutually inequivalent. This leads to the following corollary of Theorem \ref{Di}.
\begin{cor}\label{Di1}
    Let $\l\geq 1$ and $\mu_1$ and $\mu_2$ be two inequivalent one-dimensional representations of $D_{2k}.$ The $\mathscr D_{2k}$-contractions $(M_{\phi_1},M_{\phi_2})|_{\mb P_{\mu_1}\big(\mb A^{(\l)}(\mb D^2)\big)}$ and $(M_{\phi_1},M_{\phi_2})|_{\mb P_{\mu_2}\big(\mb A^{(\l)}(\mb D^2)\big)}$ are unitraily inequivalent.
\end{cor}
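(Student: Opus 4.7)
The plan is to derive Corollary \ref{Di1} as a direct specialization of Theorem \ref{Di} to one-dimensional representations. First, I would recall the definitions of the projection operators from the paragraph preceding Proposition \ref{reduce}: for a representation $\varrho \in \widehat{G}$ of degree $m$, one has the family $\mb P_\varrho^{ij}$, $1 \le i, j \le m$, and when $m = 1$ the single operator $\mb P_\varrho^{11}$ reduces to
\[
(\mb P_\varrho^{11} f)(z) \;=\; \tfrac{1}{|G|}\sum_{\sigma \in G} \pi_\varrho^{11}(\sigma^{-1}) f(\sigma^{-1}\cdot z),
\]
and since $\pi_\varrho^{11}(\sigma) = \chi_\varrho(\sigma)$ for a one-dimensional $\varrho$, this coincides exactly with $\mb P_\varrho$ as defined in Equation \eqref{proje}. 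Applying this observation to $G = D_{2k}$ and to the one-dimensional representations $\mu_1, \mu_2$, I obtain the identifications $\mb P_{\mu_j}^{11}\big(\mb A^{(\l)}(\mb D^2)\big) = \mb P_{\mu_j}\big(\mb A^{(\l)}(\mb D^2)\big)$ for $j = 1, 2$.

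With these identifications in hand, the two $\mathscr D_{2k}$-contractions appearing in the statement of Corollary \ref{Di1} are precisely two members of the family considered in Theorem \ref{Di}, namely those indexed by the inequivalent representations $\mu_1$ and $\mu_2$. Since Theorem \ref{Di} asserts that the whole family $\{(M_{\phi_1}, M_{\phi_2})|_{\mb P^{11}_\mu(\mb A^{(\l)}(\mb D^2))} : \mu \in \widehat{D}_{2k}\}$ consists of mutually unitarily inequivalent tuples, the desired inequivalence follows immediately.

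There is no serious obstacle: the only content is the observation $\mb P_\mu = \mb P_\mu^{11}$ in the one-dimensional case, which is literally the definition. The underlying nontrivial work has already been done in the proof of Theorem \ref{sm}, which reduces unitary equivalence of restrictions to a pointwise identity $\sum_{\sigma \in G}|f(\sigma \cdot \bl z)|^2 = \sum_{\sigma \in G}|g(\sigma \cdot \bl z)|^2$ on $\mb D^2$, and then to the incompatibility of the decompositions of $\operatorname{span}_{\mathbb C}\{f(\sigma \cdot \bl z) : \sigma \in D_{2k}\}$ and $\operatorname{span}_{\mathbb C}\{g(\sigma \cdot \bl z) : \sigma \in D_{2k}\}$ into copies of distinct irreducibles. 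Hence my entire write-up would be a short deduction: invoke the identification, then cite Theorem \ref{Di}.
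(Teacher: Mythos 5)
Your proposal is correct and matches the paper's own route: Corollary \ref{Di1} is obtained exactly as a specialization of Theorem \ref{Di} (itself an instance of Theorem \ref{sm}) to the one-dimensional representations, using the identification $\mb P_\mu = \mb P_\mu^{11}$ when $\deg\mu = 1$. Nothing further is needed.
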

Following analogous arguments as in the case of the group $G(1,1,n)$, it can be shown that the  $\mathscr D_{2k}$-contractions on the submodules of weighted Bergman module $\mb A^{(\l)}(\mb D^2), ,\,\ \l\geq 1,$ associated  to the one-dimensional representations of $G(k,k,2)$ are not mutually similar. 
\end{ex}

\end{document}